\newcommand{\footnotetextplain}[1]{\begingroup\def\@thefnmark{}%
  \long\def\@makefntext##1{\parindent 0pt\noindent ##1}\@footnotetext{#1}
  \endgroup}
\newcommand{\TeoremaAmbFinalMarcat}[1]{%
  \expandafter\gdef\csname end#1\endcsname{\@endtheorem}}
\hfill\rule{2.5mm}{2.5mm} \vspace{\parskip} } % end proof
\newtheorem{theorem}{Theorem}[section]
\newtheorem{proposition}[theorem]{Proposition}
\newtheorem{corollary}[theorem]{Corollary}
\newtheorem{lemma}[theorem]{Lemma}
\theoremstyle{definition}
\newtheorem{definition}[theorem]{Definition} \TeoremaAmbFinalMarcat{defi}
\newtheorem{remark}[theorem]{Remark} \TeoremaAmbFinalMarcat{rem}
\newenvironment{proclama}[1]{
                \par\vspace{\topsep}\noindent{\bf #1}
                \begin{em}}
                {\end{em}\par\vspace{\topsep}}
\def\@enum@{\list{\csname label\@enumctr\endcsname}%
           {\usecounter{\@enumctr}\def\makelabel##1{\hss\llap{##1}}
           \itemsep=2pt\parsep=0pt\topsep=3pt plus 1pt minus 1 pt}}
\newenvironment{alphlist}{\enumerate[(a)]}{\endenumerate}
\newenvironment{numlist}{\enumerate[(1)]}{\endenumerate}
\newcommand{\xnorm}[1]{ \Vert #1 \Vert }
\newcommand{\zz}[1]{\mathbb #1}
\newcommand{\gen}{\mathcal{G}}
\newcommand{\ca}{g}
\newcommand{\geod}{\mathbb{G}}
\newcommand{\cov}{\mathrm{cov}}
\newcommand{\var}{\mathrm{Var}}
\title{Self-intersections in combinatorial topology: statistical structure}
\author{Moira Chas}
\address{ Stony Brook University\\
Department of Mathematics\\ Stony Brook, NY, 11794.}
\author{Steven P. Lalley} \address{University of Chicago\\ Department
of Statistics \\ 5734
University Avenue \\
Chicago IL 60637.}
\email{moira@math.sunysb.edu, lalley@galton.uchicago.edu}
\date{\today}
\subjclass{Primary 57M05, secondary 53C22, 37D40}
\keywords{closed curves, surfaces, self-intersection, central limit theorem,
Markov chain, U-statistic}
\thanks{Supported by NSF grant DMS  - 0805755}
\begin{document}

\begin{abstract}
Oriented closed curves on an orientable surface with boundary are
described up to continuous deformation by reduced cyclic words in the
generators of the fundamental group and their inverses. By
self-intersection number one means the minimum number of transversal
self-intersection points of representatives of the class. We prove
that if a class is chosen at random from among all classes of $m$
letters, then for large $m$ the distribution of the self-intersection
number approaches the Gaussian distribution.

\end{abstract}

\maketitle
\newpage
\tableofcontents

\section{Introduction}\label{sec:introduction} 

%\subsection{Main Results}\label{ssec:main}

Oriented closed curves in a surface with boundary are, up to
continuous deformation, described by reduced cyclic words in a set of
free generators of the fundamental group and their inverses. (Recall
that such words represent the conjugacy classes of the fundamental
group.) Given a reduced cyclic word $\alpha$, define the
\emph{self-intersection number} $N (\alpha)$ to be the minimum number
of transversal double points among all closed curves represented by
$\alpha$. (See Figure~\ref{example}.)
\begin{figure}[http]
\begin{pspicture}(12,3)
%\psset{origin={-1.0,0.0}}
  \psset{xunit=0.4,yunit=0.40}\psdot(4,2.5)\psdot(8,2.5)
%\psellipse[fillstyle=solid](6,2.5)(5,2.5)
%\pscircle[fillstyle=solid,fillcolor=gray](4,2.5){0.3}
%\pscircle[fillstyle=solid,fillcolor=gray](8,2.5){0.3}
%\psline[arrowsize=6pt,linecolor=gray]{<-}(3.8,4)(3.94,4)
\psccurve[showpoints=false,linecolor=gray,linewidth=1.pt] (0,2.5)
(4,4) (5,2.5) (4,1.5) (2,2.5) (4,3.5) (6,2.5) (8,1.5) (11,2.5) (8,3.5)
(7,2.5) (8,.5) (10,2.5)  (8,4) (4,0)

%\begin{pspicture}(12,5)
\psset{origin={18.0,0.0}}
 % \psset{xunit=0.4,yunit=0.40}\
  \psdot(4,2.5)\psdot(8,2.5)
%\psellipse[fillstyle=solid](6,2.5)(5,2.5)
%\pscircle[fillstyle=solid,fillcolor=gray](4,2.5){0.6}
%\pscircle[fillstyle=solid,fillcolor=gray](8,2.5){0.6}
%\psline[arrowsize=6pt,linecolor=gray]{<-}(3.8,4)(3.94,4)
\psccurve[showpoints=false,linecolor=gray,linewidth=1.pt] (0,2.5)
(4,4) (5,2.5) (4,1.5) (2,2.5) (4,3.5) (6,2.5) (8,1.5) (9,2.5) (8,3.5)
(7,2.5) (8,.5) (10,2.5)  (8,4) (4,0)
 \end{pspicture}
%\begin{center}
%\end{pspicture}
\caption{ Two representatives of $aa\bar{b}\bar{b}$ in the doubly punctured plane. The second curve
has fewest self-intersections in its free homotopy class.}\label{example}
%\end{center}
\end{figure}
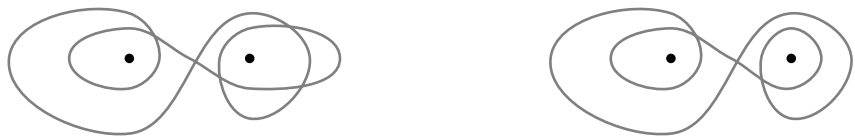
 Fix a positive integer $n$ and consider how the self-intersection
number $N (\alpha)$ varies over the population $\mathcal{F}_{n}$ of
all reduced cyclic words of length $n$. The value of $N (\alpha)$ can
be as small as $0$, but no larger than $O(n^{2})$. See
\cite{chas-phillips:2010}, \cite{chas-phillips:preprint} for precise
results concerning the maximum of $N (\alpha)$ for $\alpha \in
\mathcal{F}_{n}$, and \cite{mirzakhani} for
sharp  results on the related problem of determining the growth of the
number of non self-intersecting closed geodesics up to a given length relative to
a hyperbolic metric.

For small values of $n$, using algorithms in \cite{cohen-lustig}, or  \cite{chas:2004} it is computationally feasible to compute the
self-intersection counts $N (\alpha)$ for all words $\alpha \in
\mathcal{F}_{n}$. Such computations show that, even for relatively
small $n$, the distribution of $N(\alpha)$ over $\mathcal{F}_{n}$ is
very nearly Gaussian. (See Figure~\ref{histogram}.)
\begin{figure}[htp]\centering
\includegraphics[width=10cm,angle=-90]{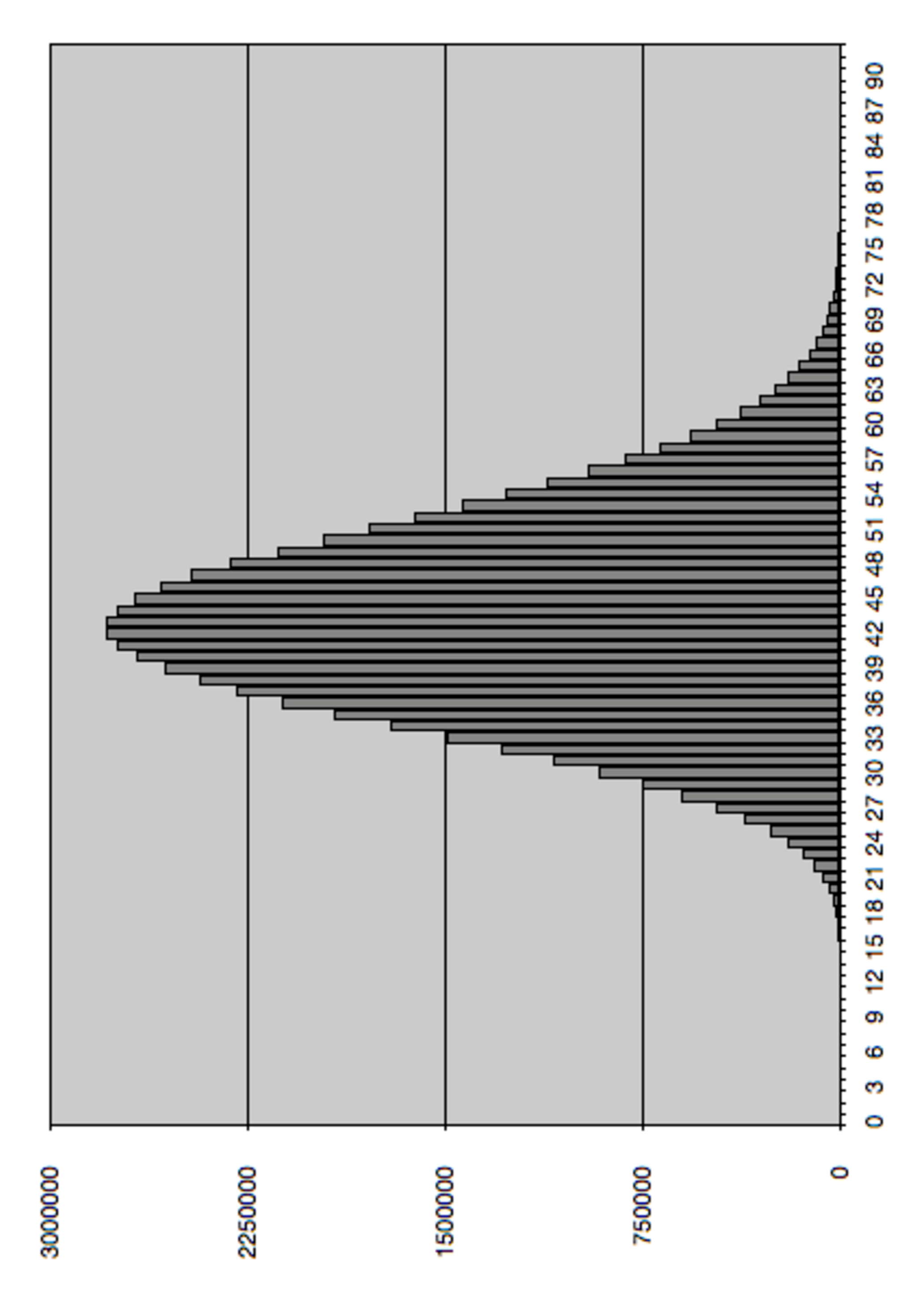} 
\caption{A histogram showing the distribution of self-intersection
numbers over  all reduced cyclic words of length 19 in the doubly punctured plane. The horizontal
coordinate shows the  self-intersection count $k$; the vertical
coordinate shows the  number of cyclic reduced words for which the
self-intersection number is  $k$. }\label{histogram}    
\end{figure}
The purpose of this paper is to prove that as $n \rightarrow \infty$
the distribution of $N (\alpha)$ over the population
$\mathcal{F}_{n}$, suitably scaled, does indeed approach a Gaussian
distribution:

\begin{proclama}{Main Theorem.} \label{theorem:main}
Let $\Sigma$ be an orientable, compact
surface with boundary and negative Euler characteristic $\chi$, and
set 
\begin{equation}\label{eq:constants}
\kappa =\kappa_{\Sigma }=\frac{\chi}{3(2\chi-1)} \mbox{ and }
\sigma^2=\sigma^{2}_{\Sigma}=
=\frac{2 \chi  (2 \chi ^2-2 \chi +1)}{45(2 \chi-1 )^2 (\chi -1)}
\end{equation}
Then for any $a<b$ the proportion of words $\alpha \in
\mathcal{F}_{n}$ such that 
\[
	a<\frac{N (\alpha)-\kappa n^{2}}{n^{3/2}}<b
\] 
converges, as $n \rightarrow \infty$, to
\[
	\frac{1}{\sqrt{2\pi}\sigma } \int_{a}^{b} \exp
	\{-x^{2}/2\sigma^{2} \} \,dx.
\]
\end{proclama}

Observe that the limiting variance $\sigma^{2}$ is \emph{positive} {
if the Euler characteristic is negative}. Consequently, the theorem
implies that (i) for most words $\alpha \in \mathcal{F}_{n}$ the
self-intersection number $N (\alpha)$ is to first order
well-approximated by $\kappa n^{2}$; and (ii) typical variations of $N
(\alpha)$ from this first-order approximation (``fluctuations'') are
of size $n^{3/2}$.

It is relatively easy to understand (if not to prove) why the number
of self-intersections of typical elements of $\mathcal{F}_{n}$ should
grow like $n^{2}$. Following is a short heuristic argument: Consider
the lift of a closed curve with minimal self-intersection number in
its class to the universal cover of the surface $\Sigma$. This lift
will cross $n$ images of the fundamental polygon, where $n$ is the
corresponding word length, and these crossings can be used to
partition the curve into $n$ nonoverlapping segments in such a way
that each segment makes one crossing of an image of the fundamental
polygon. The self-intersection count for the curve is then the number
of \emph{pairs} of these segments whose images in the fundamental
polygon cross. It is reasonable to guess that for typical classes
$\alpha \in \mathcal{F}_{n}$ (at least when $n$ is large) these
segments look like a \emph{random} sample from the set of all such
segments, and so the law of large numbers then implies that the number
of self-intersections should grow like $n^{2}\kappa '/2$ where $\kappa
'$ is the probability that two randomly chosen segments across the
fundamental polygon will cross. The difficulty in making this argument
precise, of course, is in quantifying the sense in which the segments
of a typical closed curve look like a random sample of segments. The
arguments below (see sec. \ref{sec:mc}) will make this clear.

The mystery, then, is not why the mean number of self-intersections
grows like $n^{2}$, but rather why the size of typical fluctuations is
of order $n^{3/2}$ and why the limit distribution is Gaussian. This
seems to be connected to geometry.  If the surface $\Sigma $ is
equipped with a finite-area Riemannian metric of negative curvature,
and if the boundary components are (closed) geodesics then
each free homotopy class contains a unique closed geodesic (except for
the free homotopy classes corresponding to the
punctures).  It is
therefore possible to order the free homotopy classes by the length of
the geodesic representative. Fix $L$, and let $\geod_{L}$ be the set
of all free homotopy classes whose closed geodesics are of length
$\leq L$. The main result of \cite{lalley:si2} (see also
\cite{lalley:si1}) describes the variation of the self-intersection
count $N (\alpha)$ as $\alpha$ ranges over the population $\geod_{L}$:

\begin{proclama}{Geometric Sampling Theorem.}%\label{theorem:geometricSampling}
If the Riemannian metric on $\Sigma$ is hyperbolic (i.e.,
constant-curvature -1) then there exists a possibly degenerate
probability distribution $G$ on 
$\zz{R}$ such that for all $a<b$ the proportion of words $\alpha \in
\geod_{L}$ such that 
\[
	a<\frac{N (\alpha)+L^{2}/ (\pi^{2}\chi)}{L}<b
\] 
converges, as $L \rightarrow \infty$, to $G (b)-G (a)$.
\end{proclama}

The limit distribution is not known, but is almost certainly not
Gaussian. The result leaves open the possibility (which we think
unlikely) that the limit distribution is degenerate (that is,
concentrated at a single point); if this were the case, then the true
order of magnitude of the fluctuations might be a fractional power of
$L$.  The Geometric Sampling Theorem
implies that the typical variation in self-intersection count for a
closed geodesic chosen randomly according to hyperbolic length is of
order $L$. Together with the Main Theorem, this suggests
that the much larger variations that occur when sampling by word length
are (in some sense) due to $\sqrt{n}-$variations in hyperbolic length
over the population $\mathcal{F}_{n}$.

The Main Theorem can be reformulated in probabilistic language as
follows (see Appendix~\ref{ssec:probability} for definitions):

\begin{proclama}{Main Theorem$^*$,}  
%\label{theorem:mainReformulated}
Let $\Sigma$ be an orientable, compact
surface with boundary and negative Euler characteristic $\chi$, and
let $\kappa$ and $\sigma$ be defined by \eqref{eq:constants}. 
Let $N_{n}$ be the random variable obtained by evaluating the
self-intersection function $N$ at a randomly chosen $\alpha \in
\mathcal{F}_{n}$. Then as $n \rightarrow \infty$,
\begin{equation}\label{eq:main}
	\frac{N_{n}-n^{2}\kappa }{\sigma n^{3/2}} \Longrightarrow \text{Normal} (0,1) 
\end{equation}
where $\text{Normal} (0,1)$ is the standard Gaussian distribution on
$\zz{R}$ and $\Rightarrow$ denotes convergence in distribution.
\end{proclama}

\section{Combinatorics of Self-Intersection Counts}\label{sec:combinatorics}

Our analysis is grounded on a purely combinatorial description of the
self-intersection counts $N(\alpha)$, due to \cite{birman-series:2},
\cite{cohen-lustig}, and \cite{chas:2004}.

Since $\Sigma $ has non-empty boundary, its fundamental group $\pi_{1}
(\Sigma)$ is free. We will work with a generating set of $\pi_{1}
(\Sigma)$ such that each element has a non-self-intersecting
representative (Such a basis is a natural choice to describe
self-intersections of free homotopy classes). Denote by $\gen$ the set
containing the elements of the generating set and their inverses and
by $\ca$ the cardinality of $\gen$.  Thus, $\ca=2-2\chi$, where $\chi$
denotes the Euler characteristic of $\Sigma$. It is not hard to see
that there exists a (non-unique and possibly non-reduced) cyclic word
$\mathcal{O}$ of length $\ca$ such that
\begin{numlist}
\item $\mathcal{O}$ contains each element of $\gen$ exactly once.
\item The surface $\Sigma$ can be obtained as follows: Label the edges
of a polygon with $2 \ca $ sides, alternately (so every other edge is
not labelled) with the letters of $\mathcal{O}$ and glue edges labeled
with the same letter without creating Moebius bands.
\end{numlist} 
This cyclic word $\mathcal{O}$ encodes the intersection and
self-intersection structure of free homotopy classes of curves on
$\Sigma$.

 Since $\pi_{1} (\Sigma)$ is a free group, the elements of $\pi_{1}
(\Sigma)$ can be identified with the \emph{reduced words} (which we
will also call \emph{strings}) in the generators and their inverses.
A string is \emph{joinable} if each cyclic permutation of its letters
is also a string, that is, if its last letter is not the inverse of
its first. A \emph{reduced cyclic word} (also called a \emph{necklace}
) is an equivalence class of joinable strings, where two such strings
are considered equivalent if each is a cyclic permutation of the
other. Denote by $\mathcal{S}_{n},\mathcal{J}_{n}$, and
$\mathcal{F}_{n}$ the sets of strings, joinable strings, and
necklaces, respectively, of length $n$. Since necklaces correspond
bijectively with the conjugacy classes of the fundamental group, the
self-intersection count $\alpha \mapsto N (\alpha)$ can be regarded  as a
function on the set $\mathcal{F}_{n}$ of necklaces. This function
pulls back to a function on the set $\mathcal{J}_{n}$ of joinable
strings, which we again denote by $N (\alpha)$, that is constant on
equivalence classes. By \cite{chas:2004} this function has the form
\begin{equation}\label{eq:SIForm}
	N (\alpha)=\sum_{1\leq i<j\leq n} H (\sigma^{i}\alpha,\sigma^{j}\alpha ),
\end{equation}
where $H=H(\mathcal{O})$ is a symmetric function with values in
$\{0,1\}$ on $\mathcal{J}_{n}\times \mathcal{J}_{n}$ and
$\sigma^{i}\alpha$ denotes the $i$th cyclic permutation of $\alpha $.
(Note: $\sigma^{2}$ also denotes the limiting variance in
\eqref{eq:constants}, but it will be clear from the context which of the
two meanings is in force.)

To describe the function $H$ in the representation \eqref{eq:SIForm},
we must explain the \emph{cyclic ordering} of letters.  For a cyclic
word $\alpha$ (not necessarily reduced), set $o (\alpha)= 1$ if the
letters of $\alpha$ occur in cyclic order in $\mathcal{O}$, set $o
(\alpha)=-1$ if the letters of $\alpha$ occur in \emph{reverse} cyclic
order, and set $o (\alpha)=0$ otherwise.  Consider two (finite or
infinite) strings, $\omega =c_{1}c_{2}\dotsb$ and $\omega
'=d_{1}d_{2}\dotsb$. For each integer $k\geq 2$ define functions
$u_{k}$ and $v_{k}$ of such pairs $(\omega ,\omega ')$ as follows:
First, set $u_{k} (\omega ,\omega ')=0$ unless
\begin{alphlist}
\item  both $\omega$ and $\omega '$ are of length at least $k$; and
\item $c_{1}\not =d_{1}$, $c_{k}\not =d_{k}$, and $c_{j}=d_{j}$ for all $1<j<k$.
\end{alphlist}
For any pair $(\omega ,\omega ')$ such that both (a) and (b) hold, define
%\begin{align*}
%	u_{k} (\omega ,\omega ') &=1 \quad  \text{if} \quad k=2 \quad
%	\text{and} \quad 
%	 o (\bar{c}_{1}\bar{d}_{1}c_{2}d_{2}) \not =0;\\
%	 u_{k} (\omega ,\omega ')&= 1 \quad \text{if} \quad k\geq 3
%	 \quad \text{and} \quad 
%	 o(\bar{c}_{1} \bar{d_{1}}c_{2})=o(c_{k}d_{k}\bar{c}_{k-1}) ; \; \text{and}\\
%	 u_{k} (\omega ,\omega ')&=0 \quad \text{otherwise.}
%\end{align*}
\begin{eqnarray*}
u_{k} (\omega ,\omega ') &= & \left\{
        \begin{array}{ll}
1 &\mbox{ if $k = 2$, and $o (\bar{c}_{1}\bar{d}_{1}c_{2}d_{2}) \not =0$;}\\
1 &\mbox{ if $k \ge 3$, and $o(\bar{c}_{1} \bar{d_{1}}c_{2})=o(c_{k}d_{k}\bar{c}_{k-1})$; and} \\
0 & \mbox{otherwise.} 
        \end{array}
\right.
\end{eqnarray*}
Finally, define $v_{2} (\omega ,\omega ')=0$ for all strings $\omega
,\omega '$, and for $k\geq 3$ define $v_{k} (\omega ,\omega ')=0$
unless both $\omega$ and $\omega '$ are of length at least $k$, in
which case
\[
	v_{k} (\omega ,\omega ')= u_{k} (c_{1}c_{2}\dots c_{k},
	\bar{d}_{k}\bar{d}_{k-1}\dots \bar{d}_{1}) .
\]
(Note: The only reason for defining $v_{2}$ is to
avoid having to write separate sums for the functions $v_{j}$ and
$u_{j}$ in formula \eqref{eq:combinatorialFormula} and the arguments
to follow.)
Observe that both $u_{k}$ and $v_{k}$ depend only on the first $k$
letters of their arguments. Furthermore, $u_{k}$ and $v_{k}$  are
defined for arbitrary pairs of strings, finite or infinite; for
\emph{doubly} infinite  sequences $\mathbf{x}=\dotsb
x_{-1}x_{0}x_{1}\dotsb$ we adopt the convention that 
\[
	u_{k} (\mathbf{x})=u_{k} (x_{1}x_{2}\dotsb x_{k})
	\quad \text{and} \quad 
	v_{k} (\mathbf{x})=v_{k} (x_{1}x_{2}\dotsb x_{k}).
\]

\begin{proposition}\label{proposition:chas} \cite{chas:2004}
Let $\alpha$ be a primitive necklace of length $n\geq 2$. Unhook
$\alpha$ at an arbitrary location to obtain a string
$\alpha^{*}=a_{1}a_{2}\dotsb a_{n}$, and let $\sigma^{j}\alpha^{*}$ be
the $j$th cyclic permutation of $\alpha^{*}$. Then
\begin{equation}
\label{eq:combinatorialFormula}
	N (\alpha ) = \sum_{i=1}^{n}\sum_{j=i+1}^{n} \sum_{k=2}^{n} 
	  (u_{k} (\sigma^{i}\alpha^{*},\sigma^{j}\alpha^{*})+v_{k}
	  (\sigma^{i}\alpha^{*},\sigma^{j}\alpha^{*})) .
\end{equation}
\end{proposition}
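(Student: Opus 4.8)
The plan is to deduce \eqref{eq:combinatorialFormula} from the geometric description of $N(\alpha)$ as a count of linked pairs of lifts in the universal cover, and then to make the linking criterion explicit in terms of the cyclic order recorded by the word $\mathcal{O}$.

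\textbf{Step 1: Reduction to a count of linked pairs.} Fix a complete hyperbolic metric on $\Sigma$ with geodesic boundary and let $\gamma_{\alpha}$ be the geodesic representative of the primitive class $\alpha$. By the bigon criterion (see \cite{birman-series:2}, \cite{cohen-lustig}), $\gamma_{\alpha}$ has exactly $N(\alpha)$ transversal double points, each a genuine crossing. Lifting to $\mathbb{H}^{2}$, self-intersection points of $\gamma_{\alpha}$ correspond bijectively to $\pi_{1}$-orbits of unordered pairs $\{A,A'\}$ of \emph{distinct} axes --- each the translation axis of a conjugate of the chosen lift of $\alpha$ --- whose pairs of ideal endpoints on $\partial\mathbb{H}^{2}$ are linked in the circular order. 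A closed curve in class $\alpha$ crosses a bi-infinite chain of copies of the fundamental polygon; this chain partitions $\gamma_{\alpha}$ into arcs indexed cyclically by the letters $a_{1},\dots,a_{n}$ of $\alpha^{*}$, and a crossing of the arc carrying $a_{i}$ with the arc carrying $a_{j}$ determines, and is determined by, the unordered pair $\{i,j\}$ up to the shift. This accounts for the outer sum $\sum_{1\le i<j\le n}$ and identifies the function $H$ of \eqref{eq:SIForm} with the indicator that the two axes named by $\sigma^{i}\alpha$ and $\sigma^{j}\alpha$ cross; in particular $H$ takes values in $\{0,1\}$.

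\textbf{Step 2: Linking via the cyclic order of $\mathcal{O}$.} An ideal point of $\mathbb{H}^{2}$ is a reduced infinite word in $\gen$, and the circular order on $\partial\mathbb{H}^{2}$, restricted to the sides of one copy of the fundamental polygon, is exactly the cyclic order in which the corresponding letters occur in $\mathcal{O}$ --- this is the content of properties (1)--(2) defining $\mathcal{O}$. Consequently, whether the endpoint pairs of the axes named by $\omega=\sigma^{i}\alpha$ and $\omega'=\sigma^{j}\alpha$ are linked depends only on finitely many letters of $\omega$ and $\omega'$ (their backward continuations being determined by periodicity): namely, on the behavior at the two ends of the maximal corridor of polygons traversed in common by the two axes. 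When that corridor has the shape of condition (b) --- the words differ in their first letters, agree in positions $2,\dots,k-1$, and differ again in position $k$ --- the two crossings of the corridor's boundary are governed by the cyclic triples $\bar c_{1}\bar d_{1}c_{2}$ and $c_{k}d_{k}\bar c_{k-1}$, and the axes are linked precisely when these two orientations agree, i.e. $o(\bar c_{1}\bar d_{1}c_{2})=o(c_{k}d_{k}\bar c_{k-1})$; the case $k=2$, in which the two axes diverge immediately, degenerates to the single nondegeneracy condition $o(\bar c_{1}\bar d_{1}c_{2}d_{2})\ne 0$. These are exactly the clauses defining $u_{k}$.

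\textbf{Step 3: Relative orientation and assembly.} Along the common corridor the two axes run either parallel or antiparallel; the parallel case is detected by $u_{k}(\omega,\omega')$, while the antiparallel case is detected by running the same test after reversing and inverting the second word, i.e. by $v_{k}(\omega,\omega')=u_{k}(c_{1}\cdots c_{k},\bar d_{k}\cdots\bar d_{1})$, and the two cases are mutually exclusive; that $v_{2}\equiv 0$ records the fact that an antiparallel crossing requires a corridor of positive length. Since $\alpha$ is primitive of length $n$, the two distinct periodic sequences $\omega,\omega'$ cannot agree on a block of length $n$, so only $k=2,\dots,n$ contribute; summing over $k$ gives $H(\sigma^{i}\alpha,\sigma^{j}\alpha)=\sum_{k=2}^{n}(u_{k}+v_{k})(\sigma^{i}\alpha,\sigma^{j}\alpha)$, and inserting this into the outer double sum yields \eqref{eq:combinatorialFormula}. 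The main obstacle is Step 2: one must prove, with the gluings of $\mathcal{O}$ fixed, that the linking of two ideal geodesics is computed exactly by the finite-depth comparison encoded in $u_{k}$ and $v_{k}$ --- verifying that the relevant data genuinely lives in the first $k$ letters, that the orientation conventions of $o(\cdot)$ match those produced by the gluing, and that the short-overlap cases behave correctly --- equivalently, that the no-bigon property of $\gamma_{\alpha}$ is faithfully mirrored by the combinatorics, so each genuine crossing is counted once and none is introduced spuriously. The orbit-counting of Step 1 and the primitivity bound of Step 3 are comparatively routine.
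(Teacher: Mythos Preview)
The paper does not prove Proposition~\ref{proposition:chas}: it is quoted, with attribution, from \cite{chas:2004} (itself building on \cite{birman-series:2} and \cite{cohen-lustig}), and the only supplementary material in the paper is the worked example in Appendix~\ref{example:combinatorics}. There is therefore no in-paper proof to compare your attempt against.

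That said, your outline tracks the strategy of the cited references: lift to the universal cover, identify self-intersections of the geodesic representative with $\pi_{1}$-orbits of linked pairs of axes, and then read the linking of endpoints off the cyclic order on $\partial\mathbb{H}^{2}$ induced by the side-pairing word $\mathcal{O}$. The decomposition into a parallel case ($u_{k}$) and an antiparallel case ($v_{k}$, via reversal and inversion of the second word), together with the primitivity bound $k\le n$, is the right organizing principle. As you yourself note, the real content is Step~2: one must verify, case by case, that the linking of two periodic geodesics diverging after a common corridor of length $k-2$ is decided exactly by the sign conditions $o(\bar c_{1}\bar d_{1}c_{2})=o(c_{k}d_{k}\bar c_{k-1})$ (and the degenerate $k=2$ clause), with the orientation conventions matching those of the polygon gluing. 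Your Step~1 also glosses over a point that needs care in the cited proofs: the bijection between double points and unordered index pairs $\{i,j\}$ requires checking that each crossing lands in a single pair of fundamental-domain arcs and that the orbit count introduces no over- or under-counting. What you have is a correct high-level sketch rather than a proof; filling it in amounts to reproducing the argument of \cite{chas:2004}.
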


\section{Proof of the Main Theorem: Strategy}\label{sec:strategy}

Except for the exact values \eqref{eq:constants} of the limiting
constants $\kappa$ and $\sigma^{2}$, which of course depend on the
specific form of the functions $u_{k}$ and $v_{k}$, the conclusions of
the Main Theorem hold more generally for random variables defined by
sums of the form
\begin{equation}\label{eq:generalForm}
	N (\alpha^{*} )= \sum_{i=1}^{n}\sum_{j=i+1}^{n} \sum_{k=2}^{n} 
	  h_{k} (\sigma^{i}\alpha^{*} ,\sigma^{j}\alpha^{*} )
\end{equation}
where $h_{k}$ are real-valued functions on the space of reduced sequences
$\alpha^{*}$ with entries in $\mathcal{G}$ satisfying the hypotheses
(H0)--(H3) below. The function $N$ extends to \emph{necklaces} in an
obvious way: for any necklace $\alpha $ of length $n$, unhook $\alpha$
at an arbitrary place to obtain a joinable string $\alpha^{*}$, then
define $N (\alpha )=N (\alpha^{*})$. Denote by $\lambda _{n}$,
$\mu_{n}$, and $\nu_{n}$ the uniform probability distributions on the
sets $\mathcal{J}_{n}$, $\mathcal{F}_{n}$, and $\mathcal{S}_{n}$,
respectively.

\begin{alphlist}\label{hyperbolic:h}
\item [(H0)] Each function $h_{k}$ is symmetric.
\item [(H1)] There exists $C<\infty$ such that $|h_{k}|\leq C$ for all
$k\geq 1$.
\item [(H2)] For each $k\geq 1$ the function $h_{k}$ depends only on
the first $k$ entries of its arguments.
\item [(H3)] There exist constants $C<\infty$ and $0<\beta  <1$ such
that for all $n\geq k\geq  1$ and $1\leq i<n$,
\[
	E_{\lambda _{n}} |h_{k} (\alpha ,\sigma^{i}\alpha)|
	 %+E_{\nu} |h_{k} (\alpha ,\tau ^{i}\alpha)| 
	 \leq
	 C\beta^{k}
\]
\end{alphlist}

\medskip \noindent In view of (H2), the function $h_{k}$ is
well-defined for any pair of sequences, finite or infinite, provided
their lengths are at least $k$.  Hypotheses (H0)--(H2) are clearly
satisfied for $h_{k}=u_{k}+v_{k}$, where $u_{k}$ and $v_{k}$ are as in
formula \eqref{eq:combinatorialFormula} and $u_{1}=v_{1}=0$; see Lemma
\ref{lemma:meanEstimatesJS} of section~\ref{ssec:meanEstimates} for
hypothesis (H3).

\begin{theorem}\label{theorem:general}
Assume that the functions $h_{k}$ satisfy hypotheses (H0)--(H3), and
let $N (\alpha)$ be defined by \eqref{eq:generalForm} for all
necklaces $\alpha$ of length $n$.Then there exist constants $\kappa$
and $\sigma^{2}$ (given by equations \eqref{eq:constantsMostGeneral}
below) such that if $F_{n}$ is the distribution of the random variable $ ( N
(\alpha) -n^{2}\kappa )/ n^{3/2}$ under the probability measure
$\mu_{n}$.  Then for certain constants $\kappa \in \zz{R}$ and $\sigma
\geq 0$,
\begin{equation}\label{eq:normalConvergenceGeneral}
	F_{n} \Longrightarrow \text{Normal} (0,\sigma^{2}).
\end{equation}
\end{theorem}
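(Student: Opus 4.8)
The plan is to realize $N(\alpha)$ as a (two-sample-like) $U$-statistic over a mixing Markov chain and then invoke a martingale CLT. The underlying source of randomness is a stationary Markov chain $(\xi_i)$ on the state space $\mathcal{G}$ whose transitions are the admissible (reduced) transitions, with the uniform measure on $\mathcal{S}_n$ (resp. $\mathcal{J}_n$, $\mathcal{F}_n$) being asymptotically the law of a block of this chain (up to the joinability constraint at the seam and the cyclic identification, both of which contribute only lower-order corrections because of the exponential mixing built into (H3) and the spectral gap of the transition operator). First I would set up this Markov chain rigorously, prove that $\lambda_n,\mu_n,\nu_n$ are mutually contiguous in the relevant sense, and reduce the statement about $\mu_n$ on necklaces to a statement about the chain run for $n$ steps (the cyclic structure and the unhooking point wash out). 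Hypothesis (H3) together with boundedness (H1) is exactly what makes the triple sum in \eqref{eq:generalForm} have the right order: the $k$-sum converges geometrically, so effectively $N(\alpha)=\sum_{i<j} h(\sigma^i\alpha,\sigma^j\alpha)$ for a bounded kernel $h=\sum_k h_k$ depending on finitely many coordinates (in an $L^1$ sense).

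Next I would compute the mean. Write $N(\alpha)=\sum_{i<j}\sum_k h_k(\sigma^i\alpha,\sigma^j\alpha)$ and take expectations under $\lambda_n$; by stationarity and (H3), $E\,h_k(\sigma^i\alpha,\sigma^j\alpha)$ depends (up to exponentially small error) only on $j-i$ and tends to a limit $p_k$ as $j-i\to\infty$, with $\sum_k p_k =: \kappa'$ summable. Hence $E_{\lambda_n} N = \binom{n}{2}\kappa' + O(n) = \kappa n^2 + O(n)$, which identifies $\kappa=\kappa'/2$; plugging in the explicit $u_k,v_k$ and the cyclic-order probabilities under the stationary measure gives the closed forms in \eqref{eq:constants}. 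The analogous computation for necklaces differs by $O(n)$ and so disappears after dividing by $n^{3/2}$.

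For the CLT itself I would use the Hoeffding-type decomposition of the $U$-statistic: project the kernel onto sums of functions of single coordinates. Define $g(\xi) = \sum_{k}\big(E[h_k(\,\cdot\,,\,\cdot\,)\mid \text{one argument starts at }\xi] - p_k\big)$-type conditional expectations summed over the "partner" index; more precisely, for each $i$ let $G_i$ be the sum over $j\ne i$ of the conditional expectation of the kernel given $\sigma^i\alpha$ (and symmetrically), so that the leading (Hájek) term is $\widehat{N} = \sum_{i=1}^n G_i$, a partial sum of a bounded, exponentially mixing, mean-zero functional of the Markov chain. The standard CLT for additive functionals of uniformly ergodic Markov chains (equivalently, the Gordin–Lifšic martingale approximation) then gives $\widehat N / n^{3/2}\Rightarrow \text{Normal}(0,\sigma^2)$ once one checks $G_i$ scales like $n^{1/2}$ (it is a sum of $n$ exponentially-decaying covariances, so $\var(G_i)\sim c n$) and the Green–Kubo series for $\sigma^2$ converges; this is where the explicit $\sigma^2$ of \eqref{eq:constants} comes from, after substituting $h_k=u_k+v_k$. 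Finally I would show the remainder $N - E N - \widehat N$ is $o_{P}(n^{3/2})$ by a second-moment estimate: its variance is a sum over quadruples $(i,j,i',j')$ of covariances of canonical (doubly-centered) kernel terms, and exponential mixing in all four indices plus the geometric $k$-decay forces this to be $O(n^2)$, hence negligible after the $n^{3/2}$ scaling.

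The main obstacle, I expect, is not the martingale CLT — that is classical once the mixing is in place — but controlling the combinatorial/probabilistic discrepancies: (i) proving (H3) for $h_k=u_k+v_k$ with a genuine geometric rate $\beta<1$ uniform in $n,i$ (this is the content of Lemma \ref{lemma:meanEstimatesJS}), and (ii) transferring everything cleanly among $\mathcal{S}_n$, $\mathcal{J}_n$, and $\mathcal{F}_n$, since the joinability and cyclic-quotient constraints couple all $n$ coordinates weakly and must be shown not to affect the limit. The other delicate point is the bookkeeping in the variance of the degenerate remainder: one must verify that the doubly-centered kernels really do decorrelate geometrically in the separation $\min(|i-i'|,|i-j'|,|j-i'|,|j-j'|)$, which requires the kernel's finite-range property (H2) combined with the spectral gap, and this is what keeps the remainder variance at $O(n^2)$ rather than $O(n^3)$.
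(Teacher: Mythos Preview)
Your outline matches the paper's proof closely: the same reduction from necklaces to joinable strings to a stationary Markov chain (Section~\ref{sec:mc}), the same Hoeffding/H\'ajek decomposition of the resulting $U$-statistic with the Markov-chain CLT applied to the linear projection, and a second-moment bound on the remainder. The paper organizes the last step a bit differently --- it truncates the $k$-sum at a finite level $K$, handles the finite-range degenerate part via a spectral representation of the kernel operator on $\ell^{2}(\mathcal{A},\pi)$ (Proposition~\ref{proposition:centeredKernel}), and then shows the $k>K$ tail has variance at most $\varepsilon n^{3}$ for $K$ large (Corollary~\ref{corollary:varEstimates}) --- but the skeleton is the same.

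One step in your argument does not work as written. You assert that the degenerate remainder has variance $O(n^{2})$ because the doubly-centered covariances decay geometrically in $\Delta=\min(|i-i'|,|i-j'|,|j-i'|,|j-j'|)$. Decay in the \emph{minimum} gap alone yields only $O(n^{3})$: at each fixed value of $\Delta$ there are on the order of $n^{3}$ quadruples (fix the pair of indices achieving the minimum; two of the remaining three indices still range freely over $[1,n]$), and summing a geometric series in $\Delta$ leaves the factor $n^{3}$ intact. Since you need $o(n^{3})$ for the remainder to be $o_{P}(n^{3/2})$, this is fatal to the direct route as stated. What actually brings the variance down to $O(n^{2})$ is the degeneracy itself: because $E[h^{*}(Z_{i},Z_{j})\mid Z_{i}]\approx 0$, the covariance is small unless \emph{both} indices of one pair are close to indices of the other, which cuts the count of significant quadruples to $O(n^{2})$. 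The paper sidesteps this issue via its truncation-plus-spectral argument; your direct variance bound is viable, but it needs this sharper use of the double centering, not merely mixing in the minimum separation.
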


\medskip

Formulas for the limiting constants $\kappa ,\sigma $ are given (in
more general form) in Theorem~\ref{theorem:U-CLT} below. In
section~\ref{sec:variance} we will show that in the case of particular
interest, namely  $h_{k}=u_{k}+v_{k}$ where $u_{k},v_{k}$ are as in
Proposition~\ref{proposition:chas}, the constants $\kappa $ and
$\sigma $ defined in Theorem~\ref{theorem:U-CLT} assume the values
\eqref{eq:constants} given in the statement of the Main Theorem.

Modulo the proof of Lemma \ref{lemma:meanEstimatesJS} and the
calculation of the constants $\sigma $ and $\kappa $, the Main Theorem
follows directly from Theorem~\ref{theorem:general}.  The proof of
Theorem~\ref{theorem:general} will proceed roughly as follows. First
we will prove (Lemma~\ref{lemma:MC}) that there is a
shift-invariant, Markov probability measure $\nu $ on the space
$\mathcal{S}_{\infty}$ of infinite sequences
$\mathbf{x}=x_{1}x_{2}\dotsb$ whose marginals (that is, the
push-forwards under the projection mappings to $\mathcal{S}_{n}$) are
the uniform distributions $\nu_{n}$.  Using this representation we
will prove, in subsection~\ref{ssec:j-to-s}, that when $n$ is large the
distribution of $N (\alpha)$ under $\mu_{n}$ differs negligibly from
the distribution of a related random variable defined on the Markov
chain with distribution $\nu$. See Proposition~\ref{proposition:close}
for a precise statement.  Theorem~\ref{theorem:general} will then
follow from a general limit theorem for certain \emph{U-statistics} of
Markov chains (see Theorem~\ref{theorem:U-CLT}).

\section{The Associated Markov Chain}\label{sec:mc}

\subsection{Necklaces, strings, and joinable strings}\label{ssec:counts}

Recall that a \emph{string} is a sequence with entries in the set
$\mathcal{G}$ of generators and their inverses such that no two
adjacent entries are inverses. A finite string is \emph{joinable} if
its first and last entries are not inverses. The sets of length-$n$
strings, joinable strings, and necklaces are denoted by
$\mathcal{S}_{n},\mathcal{J}_{n}$, and $\mathcal{F}_{n}$,
respectively, and the uniform distributions on these sets are denoted
by $\nu_{n}, \lambda_{n}$, and $\mu_{n}$. Let $A$ be the involutive
permutation matrix with rows and columns indexed by $\mathcal{G}$
whose entries $a (x,y)$ are $1$ if $x$ and $y$ are inverses and $0$
otherwise. Let $B$ be the matrix with all entries $1$. Then for any
$n\geq 1$,
\[
	|\mathcal{S}_{n}|=\mathbf{1}^{T} (B-A)^{n-1}\mathbf{1}
	\quad \text{and} \quad 
	|\mathcal{J}_{n}| = \text{trace} (B-A)^{n-1},
\]
where $\mathbf{1}$ denotes the (column) vector all of where entries
are $1$.  Similar formulas can be written for the number of strings
(or joinable strings) with specified first and/or last entry.  The
matrix $B-A$ is a Perron-Frobenius matrix with lead eigenvalue
$(g-1)$; this eigenvalue is simple, so both $|\mathcal{S}_{n}|$ and
$|\mathcal{J}_{n}|$ grow at the precise exponential rate $(g-1)$, that
is, there exist positive constants $C_{\mathcal{S}}=g/ (g-1)$ and
$C_{\mathcal{J}}$ such that
\[
	|\mathcal{S}_{n}|\sim C_{\mathcal{S}} (g-1)^{n}	
	\quad \text{and} \quad 
	|\mathcal{J}_{n}| \sim C_{\mathcal{J}} (g-1)^{n}.	
\]

Every necklace of length $n$ can be obtained by joining the ends of a
joinable string, so there is a natural surjective mapping
$p_{n}:\mathcal{J}_{n} \rightarrow \mathcal{F}_{n}$. This mapping is
nearly $n$ to $1$: In particular, no necklace has more than $n$
pre-images, and the only necklaces that do not have exactly $n$
pre-images are those which are periodic with some period $d|n$ smaller
than $n$. The number of these exceptional necklaces is vanishingly
small compared to the total number of necklaces. To see this, observe
that the total number of \emph{strings} of length $n\geq 2$ is $ \ca (
\ca -1)^{n-1}$; hence, the number of \emph{joinable} strings is
between $ \ca ( \ca -1)^{n-2}$ and $ \ca ( \ca -1)^{n-1}$.  The number
of length-$n$ strings with period $<n$ is bounded above by
\[
	\sum_{d|n}  \ca  ( \ca -1)^{d-1}\leq
	\text{constant}\times( \ca -1)^{n/2}. 
\]
This is of smaller exponential order of magnitude than
$|\mathcal{J}_{n}|$, so for large $n$ most necklaces of length $n$
will have exactly $n$ pre-images under the projection $p_{n}$.
Consequently, as $n \rightarrow \infty $
\[
	|\mathcal{F}_{n}|\sim C_{\mathcal{J}} (g-1)^{n}/n.
\]
More important, this implies the following.

\begin{lemma}\label{lemma:tv-necklaces-JS}
Let $\lambda_{n}$ be the uniform probability distribution on the set
$\mathcal{J}_{n}$, and let $\mu_{n}\circ p_{n}^{-1}$ be the
push-forward to $\mathcal{J}_{n}$ of the uniform distribution on
$\mathcal{F}_{n}$. Then
\begin{equation}\label{eq:tv-necklaces-JS}
	\lim_{n \rightarrow \infty} \xnorm{\lambda_{n}-\mu_{n}\circ
	p_{n^{-1}}}_{TV} =0.
\end{equation}
\end{lemma}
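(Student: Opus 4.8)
The plan is to compare $\lambda_{n}$ and $\mu_{n}\circ p_{n}^{-1}$ fiber by fiber over the projection $p_{n}\colon\mathcal{J}_{n}\to\mathcal{F}_{n}$, setting aside the vanishingly small family of necklaces of period $<n$. Let $\mathcal{F}_{n}^{\circ}$ denote the set of aperiodic (primitive) necklaces of length $n$, let $\mathcal{J}_{n}^{\circ}=p_{n}^{-1}(\mathcal{F}_{n}^{\circ})$, and let $\mathcal{F}_{n}^{\bullet}$, $\mathcal{J}_{n}^{\bullet}$ be their complements in $\mathcal{F}_{n}$, $\mathcal{J}_{n}$. The joinable strings lying over an aperiodic necklace $\alpha$ are precisely its $n$ distinct cyclic rotations, so $\mu_{n}\circ p_{n}^{-1}$ assigns mass exactly $1/(n|\mathcal{F}_{n}|)$ to each string of $\mathcal{J}_{n}^{\circ}$, while $\lambda_{n}$ assigns mass $1/|\mathcal{J}_{n}|$ to every joinable string. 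Splitting $\mathcal{J}_{n}=\mathcal{J}_{n}^{\circ}\cup\mathcal{J}_{n}^{\bullet}$ in the definition of total variation and using $(\mu_{n}\circ p_{n}^{-1})(\mathcal{J}_{n}^{\bullet})=\mu_{n}(\mathcal{F}_{n}^{\bullet})$, I get
\[
	\xnorm{\lambda_{n}-\mu_{n}\circ p_{n}^{-1}}_{TV}
	\;\le\; \lambda_{n}(\mathcal{J}_{n}^{\bullet})
	\;+\; \mu_{n}(\mathcal{F}_{n}^{\bullet})
	\;+\; |\mathcal{J}_{n}^{\circ}|\cdot\Bigl|\frac{1}{|\mathcal{J}_{n}|}-\frac{1}{n|\mathcal{F}_{n}|}\Bigr|,
\]
so it is enough to show that each of the three terms on the right tends to $0$.

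First I would dispose of the two ``periodic'' terms. The counting estimate recalled just above the statement bounds the number of length-$n$ strings of period $d\mid n$ with $d<n$ by $\mathrm{const}\cdot(g-1)^{n/2}$; since this dominates both $|\mathcal{J}_{n}^{\bullet}|$ and $|\mathcal{F}_{n}^{\bullet}|$, and since $|\mathcal{J}_{n}|\sim C_{\mathcal{J}}(g-1)^{n}$ while $|\mathcal{F}_{n}|\sim C_{\mathcal{J}}(g-1)^{n}/n$, both ratios $\lambda_{n}(\mathcal{J}_{n}^{\bullet})=|\mathcal{J}_{n}^{\bullet}|/|\mathcal{J}_{n}|$ and $\mu_{n}(\mathcal{F}_{n}^{\bullet})=|\mathcal{F}_{n}^{\bullet}|/|\mathcal{F}_{n}|$ are of smaller exponential order than $1$ and hence vanish as $n\to\infty$.

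For the remaining term the key observation is that, on the aperiodic part, both measures are almost uniform with almost the same total mass. From $|\mathcal{J}_{n}^{\circ}|=n|\mathcal{F}_{n}^{\circ}|$ together with $|\mathcal{F}_{n}^{\circ}|=(1+o(1))|\mathcal{F}_{n}|$ and $|\mathcal{J}_{n}^{\circ}|=(1+o(1))|\mathcal{J}_{n}|$ (both consequences of the previous paragraph), I obtain
\[
	\frac{1}{n|\mathcal{F}_{n}|}=\frac{1+o(1)}{|\mathcal{J}_{n}^{\circ}|}
	\qquad\text{and}\qquad
	\frac{1}{|\mathcal{J}_{n}|}=\frac{1+o(1)}{|\mathcal{J}_{n}^{\circ}|},
\]
where the $o(1)$'s depend only on $n$. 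Subtracting and multiplying by $|\mathcal{J}_{n}^{\circ}|$ shows the third term is $o(1)$, which finishes the argument.

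I do not anticipate a genuine obstacle here: the proof is elementary bookkeeping once one has the asymptotics $|\mathcal{J}_{n}|\sim C_{\mathcal{J}}(g-1)^{n}$, $|\mathcal{F}_{n}|\sim C_{\mathcal{J}}(g-1)^{n}/n$ and the sub-exponential count of periodic words, all of which are furnished by the Perron--Frobenius analysis of $B-A$ recalled above. The only point to be slightly careful about is that the $1+o(1)$ corrections must be \emph{uniform} over the fibers of $p_{n}$, so that they are not destroyed when summed against the $|\mathcal{J}_{n}^{\circ}|$ terms; this is automatic because those corrections are functions of $n$ alone.
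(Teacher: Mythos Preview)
Your argument is correct and is precisely the computation the paper leaves implicit: the paper does not give a separate proof of the lemma but simply asserts, after establishing the asymptotics $|\mathcal{J}_{n}|\sim C_{\mathcal{J}}(g-1)^{n}$, $|\mathcal{F}_{n}|\sim C_{\mathcal{J}}(g-1)^{n}/n$ and the $O((g-1)^{n/2})$ bound on periodic words, that ``this implies the following.'' Your fiberwise decomposition into primitive and non-primitive parts and the three-term estimate is exactly the elementary bookkeeping needed to turn that assertion into a proof, and nothing further is required.
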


Here $\xnorm{\cdot}_{TV}$ denotes the total variation norm on measures
-- see the Appendix. By Lemma~\ref{lemma:tools} of the Appendix, it
follows that the distributions of the random variable $N (\alpha )$
under the probability measures $\lambda_{n}$ and $\mu_{n}$ are
asymptotically indistinguishable.

\subsection{The associated Markov measure}\label{ssec:markovMeasure}

The matrix $(B-A)$ has the convenient feature that its row sums and
column sums are all $g-1$. Therefore, the matrix $\zz{P}:= (B-A)/
(g-1)$ is a stochastic matrix, with entries
\begin{eqnarray} \label{eq:tps}
p (a,b) &= & \left\{
        \begin{array}{ll}
\theta &\mbox{ if $b\not =a^{-1}$, and}\\
0 & \mbox{otherwise, where} 
        \end{array}
\right. \\
\label{eq:thetaDef}
	\theta&=&(\ca -1)^{-1}.
\end{eqnarray}
In fact, $\zz{P}$ is \emph{doubly stochastic}, that is, both its rows
and columns sum to $1$.  Moreover, $\zz{P}$ is aperiodic and
irreducible, that is, for some $k\geq 1$ (in this case $k=2$) the
entries of $\zz{P}^{k}$ are strictly positive. It is an elementary
result of probability theory that for any aperiodic, irreducible,
doubly stochastic matrix $\zz{P}$ on a finite set $\mathcal{G}$ there
exists a shift-invariant probability measure $\nu$ on sequence space
$\mathcal{S}_{\infty}$, called a \emph{Markov measure}, whose value on
the cylinder set $C (x_{1}x_{2}\dotsb x_{n})$ consisting of all
sequences whose first $n$ entries are $x_{1}x_{2}\dotsb x_{n}$ is
\begin{equation}\label{eq:markovMeasure}
	\nu (C(x_{0}x_{1}x_{2}\dotsb x_{n}))= \frac{1}{|\mathcal{G}|}
	\prod_{i=1}^{n-1}p (x_{i},x_{i+1})
\end{equation}
Any random sequence $\mathbf{X}= (X_{1}X_{2}\dotsb)$
valued in $\mathcal{S}_{\infty}$, defined on any probability space
$(\Omega, P)$, whose distribution is $\nu $ is called a 
\emph{stationary Markov chain with transition probability matrix}
$\zz{P}$. In particular, the coordinate process on
$(\mathcal{S}_{\infty }, \nu)$ is a Markov chain with t.p.m. $\zz{P}$.

\begin{lemma}\label{lemma:MC}
Let $\mathbf{X}= (X_{1}X_{2}\dotsc )$ be a stationary
Markov chain with transition probability matrix $\zz{P}$ defined by
\eqref{eq:tps}.  Then for any $n\geq 1$ the distribution of the random
string $X_{1}X_{2}\dotsb X_{n}$ is the uniform distribution $\nu_{n}$
on the set $\mathcal{S}_{n}$.
\end{lemma}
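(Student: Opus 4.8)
The plan is to verify directly that the stationary Markov chain $\mathbf{X}$ assigns equal probability to every string in $\mathcal{S}_n$, using the explicit formula \eqref{eq:markovMeasure} for the Markov measure $\nu$ together with the special structure of $\zz{P}$ recorded in \eqref{eq:tps}. The key observation is that every nonzero transition probability $p(a,b)$ has the \emph{same} value $\theta=(\ca-1)^{-1}$, independent of $a$ and $b$; the only role played by the matrix is to forbid the transitions $b=a^{-1}$, which is exactly the defining constraint of a string.

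First I would fix a string $x_1x_2\dotsb x_n\in\mathcal{S}_n$. By definition of a string, $x_{i+1}\neq x_i^{-1}$ for every $1\leq i\leq n-1$, so by \eqref{eq:tps} each factor $p(x_i,x_{i+1})$ equals $\theta$, and hence is nonzero. Plugging into \eqref{eq:markovMeasure} gives
\[
	\nu\bigl(C(x_1x_2\dotsb x_n)\bigr)=\frac{1}{|\mathcal{G}|}\prod_{i=1}^{n-1}p(x_i,x_{i+1})=\frac{1}{\ca}\,\theta^{\,n-1}=\frac{1}{\ca(\ca-1)^{n-1}}.
\]
This value does not depend on the particular string chosen. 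On the other hand, if $y_1y_2\dotsb y_n$ is \emph{not} a string, then $y_{i+1}=y_i^{-1}$ for some $i$, so the corresponding factor $p(y_i,y_{i+1})$ vanishes and $\nu\bigl(C(y_1\dotsb y_n)\bigr)=0$; thus $\nu$ is supported, at the level of length-$n$ cylinders, on $\mathcal{S}_n$. Since the cylinder sets $C(x_1\dotsb x_n)$ over all $x_1\dotsb x_n\in\mathcal{S}_n$ form a partition of the support of the $n$-th marginal and each has the common mass $1/(\ca(\ca-1)^{n-1})$, the pushforward of $\nu$ under the projection $\mathbf{x}\mapsto x_1\dotsb x_n$ is the uniform distribution on $\mathcal{S}_n$. (As a consistency check, summing the common value over all $|\mathcal{S}_n|=\ca(\ca-1)^{n-1}$ strings yields $1$, confirming that $\nu$ is a probability measure and that no mass escapes to non-strings.) This marginal is by definition the distribution of the random string $X_1X_2\dotsb X_n$, which is therefore $\nu_n$.

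There is essentially no obstacle here: the lemma is a direct computation, and the entire content is the remark that a constant transition matrix on the allowed transitions produces a uniform measure on admissible words. The one point deserving a word of care is that the formula \eqref{eq:markovMeasure} already builds in the correct normalization — the leading factor $1/|\mathcal{G}|$ is the stationary initial distribution (uniform on $\mathcal{G}$, which is stationary precisely because $\zz{P}$ is doubly stochastic) — so that $\nu$ is genuinely a shift-invariant probability measure and its marginals are consistent; this is exactly the "elementary result of probability theory" invoked just before the statement, and it is what lets us identify the $n$-th marginal with the distribution of $X_1\dotsb X_n$ without further argument.
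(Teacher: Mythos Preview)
Your proof is correct and follows essentially the same approach as the paper's own proof: both observe that the transition probabilities take only the values $0$ and $\theta$, so the nonzero cylinder probabilities in \eqref{eq:markovMeasure} are all equal and are supported precisely on the strings of length $n$. Your version is simply more explicit, computing the common cylinder mass $1/(\ca(\ca-1)^{n-1})$ and checking that the total sums to $1$, whereas the paper leaves these as implicit.
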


\begin{proof}
The transition probabilities $p (a,b)$ take only two values, $0$ and
$\theta $, so for any $n$ the nonzero cylinder probabilities
\eqref{eq:markovMeasure} are all the same. Hence, the distribution of
$X_{1}X_{2}\dotsb X_{n}$ is the uniform distribution on the set of all
strings $\xi =x_{1}x_{2}\dotsb x_{n}$ such that the cylinder
probability $\nu (C (\xi ))$ is positive. These are precisely the
\emph{strings} of length $n$. 
\end{proof}

\subsection{Mixing properties of the Markov
chain}\label{ssec:mixingRate} Because the transition probability
matrix $\zz{P}$ defined by \eqref{eq:tps} is aperiodic and irreducible,
the $m-$step transition probabilities (the entries of the $m$th power
$\zz{P}^{m}$ of $\zz{P}$) approach the stationary (uniform)
distribution exponentially fast. The one-step transition probabilities
\eqref{eq:tps} are simple enough that precise bounds can be given:

\begin{lemma}\label{lemma:exponentialErgodicity} 
The $m-$step transition probabilities $p_{m} (a,b)$ of the Markov
chain with $1-$step transition probabilities \eqref{eq:tps} satisfy
\begin{equation}\label{eq:exponentialErgodicity}
	\left| p_{m} (a,b)-\frac{1}{ \ca }\right|\leq \theta^{m}
\end{equation}
where $\theta =1/ ( \ca -1)$.
%\begin{eqnarray} \label{eq:tps}
%\left| p_{m} (a,b) -\frac{1}{2\gen}\right|&= & \left\{
%        \begin{array}{ll}
%\frac{1-s}{2\gen(2\gen -1)^m} &\mbox{ if $m$ is odd, and}\\
%\frac{1+s}{2\gen(2\gen -1)^m}  & \mbox{if $m$ is even,} 
%        \end{array}
%\right.
%\end{eqnarray}
%where $s \in \{0,1\}$.
\end{lemma}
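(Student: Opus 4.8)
The plan is to write $\zz{P}$ in terms of the matrices $A$ and $B$ introduced in \S\ref{ssec:counts} and to compute the matrix power $\zz{P}^{m}$ in closed form. The structural facts I would use are elementary: since $A$ is a permutation matrix its row and column sums equal $1$, and $B=\mathbf{1}\mathbf{1}^{T}$, so that
\[
AB=BA=B,\qquad B^{2}=\ca B,\qquad B A^{\ell}=B\ \text{ for every }\ell\ge 0 .
\]
In particular $A$ and $B$ commute, so the binomial theorem applies to $(B-A)^{m}=(\ca-1)^{m}\zz{P}^{m}$.

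Expanding, $(B-A)^{m}=\sum_{j=0}^{m}\binom{m}{j}B^{j}(-A)^{m-j}$. For $j\ge 1$ one has $B^{j}=\ca^{\,j-1}B$ and $BA^{m-j}=B$, so the $j$-th term equals $(-1)^{m-j}\ca^{\,j-1}B$, a scalar multiple of $B$; only the $j=0$ term, $(-1)^{m}A^{m}$, is not. Summing the $B$-terms and using $\sum_{j=0}^{m}\binom{m}{j}(-1)^{m-j}\ca^{\,j}=(\ca-1)^{m}$ to evaluate the leftover sum as $(\ca-1)^{m}-(-1)^{m}$, I would obtain
\[
(B-A)^{m}=(-1)^{m}A^{m}+\frac{(\ca-1)^{m}-(-1)^{m}}{\ca}\,B ,
\]
and hence, after dividing by $(\ca-1)^{m}$ and writing $\theta=(\ca-1)^{-1}$,
\[
\zz{P}^{m}=(-\theta)^{m}A^{m}+\frac{1-(-\theta)^{m}}{\ca}\,B .
\]

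Reading off the $(a,b)$ entry, and recalling that every entry of $B$ equals $1$, gives
\[
p_{m}(a,b)-\frac{1}{\ca}=(-\theta)^{m}\Bigl((A^{m})_{ab}-\frac{1}{\ca}\Bigr).
\]
Since $A^{m}$ is again a permutation matrix, $(A^{m})_{ab}\in\{0,1\}$, so $|(A^{m})_{ab}-1/\ca|\le 1-1/\ca<1$; taking absolute values yields $|p_{m}(a,b)-1/\ca|\le\theta^{m}(1-1/\ca)<\theta^{m}$, which is \eqref{eq:exponentialErgodicity}, in fact with a small margin to spare. I do not expect a genuine obstacle here; the only thing to be careful about is the bookkeeping in the binomial collapse, namely isolating the $j=0$ term and correctly recognizing the remaining sum as $(\ca-1)^{m}-(-1)^{m}$. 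An essentially equivalent route is spectral: $A$ and $B$ are simultaneously diagonalizable, with $\zz{P}$ acting as the identity on $\operatorname{span}(\mathbf{1})$ and as $-\theta A$ on $\mathbf{1}^{\perp}$, so that $\zz{P}^{m}-\ca^{-1}B=(-\theta)^{m}(A^{m}-\ca^{-1}B)$; but the direct matrix identity above is shorter and avoids having to discuss the multiplicities of $\pm1$ as eigenvalues of $A$.
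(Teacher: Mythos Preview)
Your proof is correct and takes essentially the same approach as the paper: both compute $(B-A)^{m}$ in closed form from the relations $AB=BA=B$ and $B^{2}=\ca B$, and then read off the entrywise bound. The only difference is cosmetic---the paper argues by a short induction and splits into the cases $m$ odd and $m$ even (using $A^{2}=I$), whereas you apply the binomial theorem to obtain the unified formula $(B-A)^{m}=(-1)^{m}A^{m}+\ca^{-1}\bigl((\ca-1)^{m}-(-1)^{m}\bigr)B$, which incidentally does not require $A$ to be involutive and yields the slightly sharper constant $(1-1/\ca)\theta^{m}$.
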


\begin{proof}
Recall that $\zz{P}=\theta
(B-A)$ where $B$ is the matrix with all entries $1$ and $A$ is
an involutive permutation matrix.  Hence,
$BA=AB=B$ and $B^{2}= \ca B =((\theta+1)/\theta)B$. This implies, by a routine induction
argument, that for every integer $m\geq 1$,
\begin{align*}
	(B-A)^{m} &= \left(\frac{\theta^{-m}+1}{ \ca } \right)B-A \quad
	\text{if $m$ is odd, and}\\
	(B-A)^{m} &= \left(\frac{\theta^{-m}-1}{ \ca } \right)B +I \quad
	\text{if $m$ is even.}
\end{align*}
The inequality \eqref{eq:exponentialErgodicity} follows directly.
\end{proof}

Following is a useful way to reformulate the exponential convergence
\eqref{eq:exponentialErgodicity}. Let $\mathbf{X}= (X_{j})_{j\in \zz{Z}}$ be 
a stationary Markov chain with transition probabilities \eqref{eq:tps}.
For any finite subset $J\subset \zz{N}$, let $X_{J}$ denote the
restriction of $\mathbf{X}$ to the index set $J$, that is,
\[
	X_{J}= (X_{j})_{j\in J};
\]
for example, if $J$ is the interval $[n]$ then
$X_{J}$ is just the random string $X_{1}X_{2}\dotsb X_{n}$. Denote
by $\nu_{J}$ the distribution of $X_{J}$, viewed as a probability
measure on the set $\gen^{J}$; thus, for any subset
$F\subset \gen^{J}$,
\begin{equation}\label{eq:muJ}
	\nu_{J} (F)=P\{X_{J}\in F \}.
\end{equation}
If $J,K$ are non-overlapping subsets of $\zz{N}$, then $\nu_{J\cup K}$
and $\nu_{J}\times \nu_{K}$ are both probability measures on
$\gen^{J\cup K}$, both with  support set equal to the set of
all restrictions of infinite strings.

\begin{lemma}\label{lemma:expSeparation}
Let $J,K\subset \zz{N}$ be two finite subsets such that 
$\max (J)+m\leq \min (K)$ for some $m\geq 1$. Then
on their common support set,
\begin{equation}\label{eq:expSeparation}
		\frac{1- \ca  \theta^{m-1}}{1+ \ca  \theta^{m-1}}
	\leq \frac{d\nu_{J\cup K}}{d\nu_{J}\times \nu_{K}}
	 \leq  \frac{1+ \ca  \theta^{m-1}}{1- \ca  \theta^{m-1}}
\end{equation}
where $\theta =1/ ( \ca -1)$ and $d\alpha /d\beta$ denotes the
Radon-Nikodym derivative (``likelihood ratio'') of the probability
measure $\alpha$ and $\beta$.
\end{lemma}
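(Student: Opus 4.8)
The plan is to deduce Lemma~\ref{lemma:expSeparation} directly from the product structure of the Markov measure $\nu$ together with the quantitative ergodicity bound \eqref{eq:exponentialErgodicity}. First I would reduce to the case where $J$ and $K$ are intervals, or more precisely observe that since the Markov chain has the Markov property, conditioning $X_{J\cup K}$ only on the last coordinate $X_{\max(J)}$ of the block over $J$ and the first coordinate $X_{\min(K)}$ of the block over $K$ captures all the dependence between the two blocks. Concretely, write $a=\max(J)$, $b=\min(K)$, so $b-a\geq m$, and let $x_J\in\gen^J$, $x_K\in\gen^K$ be elements of the support of $\nu_J\times\nu_K$; denote by $u$ the value $x_J$ assigns to $a$ and by $w$ the value $x_K$ assigns to $b$. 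Then by the defining formula \eqref{eq:markovMeasure} for cylinder probabilities (applied to the block from $\min(J)$ to $\max(K)$, which decomposes as the $J$-block, the bridge from $a$ to $b$, and the $K$-block), one gets
\[
	\nu_{J\cup K}(x_J,x_K) = \nu_J(x_J)\,\nu_K(x_K)\cdot \ca\, p_{b-a}(u,w),
\]
because summing the transition products over all admissible intermediate strings between positions $a$ and $b$ is exactly $p_{b-a}(u,w)$, and the normalizing factors $1/|\gen|$ for the two separate blocks versus the single joint block differ by one factor of $\ca=|\gen|$. Hence the Radon–Nikodym derivative at $(x_J,x_K)$ equals $\ca\,p_{b-a}(u,w)$, which depends on $(x_J,x_K)$ only through $(u,w)$.

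Second, I would invoke Lemma~\ref{lemma:exponentialErgodicity}: since $b-a\geq m$ and $\theta<1$ so that $\theta^{b-a}\leq\theta^{m}$, we have $\bigl|p_{b-a}(u,w)-1/\ca\bigr|\leq\theta^{b-a}\leq\theta^{m}$, and therefore
\[
	1-\ca\,\theta^{m} \;\leq\; \ca\,p_{b-a}(u,w) \;\leq\; 1+\ca\,\theta^{m},
\]
which is precisely \eqref{eq:expSeparation}. The only genuinely careful point — and the one I expect to be the main (if minor) obstacle — is bookkeeping the normalization constants correctly: one must check that marginalizing the joint cylinder measure over positions strictly between $\min(J)$ and $\max(K)$ that do not lie in $J\cup K$, and also over the ``gaps'' inside $J$ or inside $K$ if those sets are not intervals, really does collapse to the product $\nu_J(x_J)\nu_K(x_K)$ times a single $m$-step (or $(b-a)$-step) transition probability, with exactly one surplus factor of $\ca$. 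This is a routine consequence of $\sum_b p(a,b)=1$ (telescoping the intermediate sums) together with the fact that $\zz P$ is doubly stochastic so that the uniform distribution is stationary, but it is worth writing out for one representative configuration. Once the identity for the Radon–Nikodym derivative is in hand, the bound is immediate from Lemma~\ref{lemma:exponentialErgodicity} and there is nothing further to do.
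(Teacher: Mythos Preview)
Your proposal is correct and follows essentially the same route as the paper: reduce (via marginalization over excluded coordinates) to the situation where only the last coordinate of the $J$-block and the first coordinate of the $K$-block matter, identify the Radon--Nikodym derivative as $\ca\,p_{b-a}(u,w)=p_{b-a}(u,w)/\pi(w)$, and then apply Lemma~\ref{lemma:exponentialErgodicity}. The paper phrases the reduction as passing to the case where $J$ and $K$ are intervals and uses stationarity to fix $J=[1,n]$, $K=[n+m,n+q]$, but the computation and the invocation of \eqref{eq:exponentialErgodicity} are the same as yours.
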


\begin{proof}
It suffices to consider the special case where $J$ and $K$ are
intervals, because the general case can be deduced by summing over
excluded variables.  Furthermore, because the Markov chain is
stationary, the measures $\nu_{J}$ are invariant by translations (that
is, $\nu_{J+n}=\nu_{J}$ for any $n\geq 1$), so we may assume that
$J=[1,n]$ and $K=[n+m, n+q]$. Let $x_{J\cup K}$ be the restriction of some
infinite string to $J\cup K$; then
\begin{align*}
	\nu_{J}\times \nu_{K} (x_{J\cup K}) &=\pi
	(x_{1})\left(\prod_{j=1}^{n-1}p (x_{j},x_{j+1}) \right) 
	\pi (x_{n+m}) \left(\prod_{j=n+m}^{m+q-1}p (x_{j}, x_{j+1})
	\right)
	\quad \text{and}\\
	\nu_{J\cup K} (x_{J\cup K})&=\pi
	(x_{1})\left(\prod_{j=1}^{n-1}p (x_{j},x_{j+1}) \right) p_{m} 
	(x_{n},x_{n+m}) \left(\prod_{j=n+m}^{m+q-1}p (x_{j}, x_{j+1})
	\right) .	
\end{align*}
The result now follows directly from the double inequality
\eqref{eq:exponentialErgodicity}. 
\end{proof}

\subsection{From random joinable strings to random
strings}\label{ssec:j-to-s} Since $\mathcal{J}_{n}\subset
\mathcal{S}_{n}$, the uniform distribution $\lambda_{n}$ on
$\mathcal{J}_{n}$ is gotten by restricting the uniform distribution
$\nu_{n}$ on  $\mathcal{S}_{n}$ to $\mathcal{J}_{n}$ and then
renormalizing:
\[
	\lambda_{n} (F)=\frac{\nu_{n} (F\cap \mathcal{J}_{n})}{\nu_{n}
	(\mathcal{J}_{n})}.
\]
Equivalently, the distribution of a random \emph{joinable} string is
the conditional distribution of a random string given that its first
and last entries are not inverses. Our goal here is to show that the
distributions of the random variable $N (\alpha)$ defined by
\eqref{eq:generalForm}  under the probability measures $\lambda_{n}$
and $\nu_{n}$ differ negligibly when $n$ is large. For this we will
show first that the distributions under $\lambda_{n}$ and $\nu_{n}$,
respectively, of the \emph{substring} gotten by deleting
the last $n^{1/2-\varepsilon}$ letters are close in total variation distance;
then we will show that changing the last $n^{1/2-\varepsilon}$ letters
has only a small effect on the value of $N (\alpha )$.

\begin{lemma}\label{lemma:tv}
Let $X_{1}X_{2}\dotsb X_{n}$ be a random string of length $n$, and
$Y_{1}Y_{2}\dotsb Y_{n}$ a random \emph{joinable} string.  For any
integer $m\in [1,n-1]$ let $\nu_{n,m}$ and $\lambda_{n,m}$ denote the
distributions of the random substrings $X_{1}X_{2}\dotsb
X_{n-m}$ and $Y_{1}Y_{2}\dotsb Y_{n-m}$. Then $\lambda_{n,m} \ll
\nu_{n,m}$, and  the Radon-Nikodym
derivatives satisfy
\begin{equation}\label{eq:ac}
	\frac{1- \ca  \theta^{m}}{1+ \ca  \theta^{m}}
	\leq \frac{d\lambda_{n,m}}{d\nu_{n,m}}
	 \leq  \frac{1+ \ca  \theta^{m}}{1- \ca  \theta^{m}}
\end{equation}
where $\theta =1/( \ca -1) $.  Consequently, 
\begin{equation}\label{eq:tv}
	\xnorm{\nu_{n,m}-\lambda_{n,m}}_{TV} \leq  2\left( \frac{1+ \ca 
	\theta^{m}}{1- \ca  \theta^{m}} -1 \right).
\end{equation}
\end{lemma}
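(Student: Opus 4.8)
The plan is to control the Radon--Nikodym derivative $d\lambda_{n,m}/d\nu_{n,m}$ by realizing both $\lambda_{n,m}$ and $\nu_{n,m}$ as marginals of measures on $\mathcal{G}^{[1,n]}$ to which Lemma~\ref{lemma:expSeparation} applies. First I would write, for any string-valued event $F\subset \mathcal{G}^{[1,n-m]}$ supported on strings,
\[
	\lambda_{n,m}(F)=\frac{\nu_{n}\bigl(\{X_{1}\dotsb X_{n-m}\in F\}\cap \mathcal{J}_{n}\bigr)}{\nu_{n}(\mathcal{J}_{n})}
	=\frac{P\{X_{[1,n-m]}\in F,\ X_{n}\neq X_{1}^{-1}\}}{P\{X_{n}\neq X_{1}^{-1}\}},
\]
using Lemma~\ref{lemma:MC} to identify $\nu_{n}$ with the law of $X_{1}\dotsb X_{n}$ under the stationary Markov chain, and the description of $\lambda_{n}$ as $\nu_{n}$ conditioned on $\{X_{1},X_{n}\text{ not inverses}\}$. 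Meanwhile $\nu_{n,m}(F)=P\{X_{[1,n-m]}\in F\}$. So the likelihood ratio at a string $x_{[1,n-m]}$ equals
\[
	\frac{d\lambda_{n,m}}{d\nu_{n,m}}(x_{[1,n-m]})=\frac{P\{X_{n}\neq X_{1}^{-1}\mid X_{[1,n-m]}=x_{[1,n-m]}\}}{P\{X_{n}\neq X_{1}^{-1}\}},
\]
and the whole problem reduces to showing both the numerator and denominator lie within $\mathcal{G}\theta^{m}$ of their ``independent'' value.

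The key estimate is on the numerator. Condition also on $X_{1}=a$ (the denominator's conditioning on $X_{1}$ alone is handled the same way, with $J=\{1\}$): then $P\{X_{n}\neq a^{-1}\mid X_{1}=a, X_{2}\dotsb X_{n-m}=\cdots\}$ depends on the conditioning string only through $X_{n-m}=c$, and by the Markov property it equals $1-p_{m}(c,a^{-1})$. By Lemma~\ref{lemma:exponentialErgodicity}, $|p_{m}(c,a^{-1})-1/\ca|\le \theta^{m}$, so this probability lies in $[1-1/\ca-\theta^{m},\,1-1/\ca+\theta^{m}]$ regardless of the conditioning string; averaging over $X_{1}=a$ with the stationary weights $1/\ca$ gives the same two-sided bound for the numerator. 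The denominator $P\{X_{n}\neq X_{1}^{-1}\}=\sum_{a}\tfrac{1}{\ca}(1-p_{n-1}(a,a^{-1}))$ lies in the same interval with $m$ replaced by $n-1$, but since $n-1\ge m$ (as $m\le n-1$) it is in particular within $\theta^{m}$ of $1-1/\ca$ as well — alternatively one gets it directly from $\nu_{n}(\mathcal{J}_{n})$ being within $\mathcal{G}\theta^{n-1}$ of $(\ca-1)/\ca$. Taking the ratio of the bound $[1-1/\ca-\theta^{m},1-1/\ca+\theta^{m}]$ over $[1-1/\ca-\theta^{m},1-1/\ca+\theta^{m}]$ and multiplying numerator and denominator by $\ca/(\ca-1)=1/(1-1/\ca)\le \ca\theta^{0}$... more carefully: $\tfrac{1-1/\ca-\theta^{m}}{1-1/\ca}=1-\tfrac{\theta^{m}}{1-1/\ca}=1-\ca\theta^{m}/(\ca-1)\cdot$ hmm; the clean route is to note $(1-1/\ca)^{-1}\theta^{m}=\ca\theta^{m}/(\ca-1)=\ca\theta^{m+1}\le \ca\theta^{m}$, so the numerator-over-stationary ratio is in $[1-\ca\theta^{m},1+\ca\theta^{m}]$ and likewise the denominator, and dividing these yields exactly \eqref{eq:ac}. (This is the same bookkeeping already carried out in the proof of Lemma~\ref{lemma:expSeparation}, which one may simply cite: take $J=\{1\}$, $K=\{n\}$ there, noting the restriction to $\mathcal{J}_{n}$ is a measurable function of $(X_{1},X_{n})$, and the gap is $\max(J)+m\le n-m+m=n>\min(K)$... one must instead apply it with the roles reorganized, comparing the joint law of $(X_{[1,n-m]},X_{n})$ to the product of its marginals, which Lemma~\ref{lemma:expSeparation} gives with separation parameter $m$.)

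Finally, \eqref{eq:tv} follows from \eqref{eq:ac} by the standard estimate: if a probability measure $\mu'$ has density $f$ with respect to $\mu$ with $1-\delta\le f\le 1+\delta$ where $\delta=\bigl(\tfrac{1+\ca\theta^{m}}{1-\ca\theta^{m}}-1\bigr)$, then $\|\mu'-\mu\|_{TV}=\tfrac12\int|f-1|\,d\mu\le \tfrac12\delta\cdot\mu(\{f\ge1\})+\tfrac12\delta\cdot\mu(\{f<1\})\le\delta\le 2\delta$ — and since the actual density bounds are the tighter interval $[\tfrac{1-\ca\theta^{m}}{1+\ca\theta^{m}},\tfrac{1+\ca\theta^{m}}{1-\ca\theta^{m}}]$ symmetric about $1$ in the multiplicative sense, the deviation above $1$ is at most $\tfrac{1+\ca\theta^{m}}{1-\ca\theta^{m}}-1$, giving \eqref{eq:tv} with room to spare. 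I expect the only real obstacle to be the careful identification in the first paragraph — making sure that the event ``$X_{1}$ and $X_{n}$ are not inverses'' is handled as a function of the pair $(X_{[1,n-m]},X_{n})$ and matched against Lemma~\ref{lemma:expSeparation} with the correct separation $m$ — after which everything is the elementary algebra above.
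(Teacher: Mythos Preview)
Your proposal is correct and is essentially the paper's own argument, just phrased in conditional-probability language rather than as an explicit product of transition probabilities: the paper writes $\lambda_{n,m}(x_{1}\dotsb x_{n-m})$ as $\nu_{n,m}(x_{1}\dotsb x_{n-m})$ times the ratio $\bigl(\sum_{b\ne x_{1}^{-1}} p_{m}(x_{n-m},b)\bigr)\big/\bigl(\ca^{-1}\sum_{a}\sum_{b\ne a^{-1}} p_{n-1}(a,b)\bigr)$, which is exactly your $P\{X_{n}\neq X_{1}^{-1}\mid X_{[1,n-m]}\}/P\{X_{n}\neq X_{1}^{-1}\}$, and then bounds that ratio via Lemma~\ref{lemma:exponentialErgodicity} just as you do. One cosmetic point: your ``dividing these yields exactly \eqref{eq:ac}'' actually produces the slightly sharper bound $(1\pm \ca\theta^{m+1})/(1\mp \ca\theta^{m+1})$, which of course implies \eqref{eq:ac}; and your attempted direct citation of Lemma~\ref{lemma:expSeparation} with $J=\{1\}$, $K=\{n\}$ is not quite right as stated (the relevant comparison is between $\nu_{[1,n-m]\cup\{n\}}$ and $\nu_{[1,n-m]}\times\nu_{\{n\}}$, with separation $m$), but you don't actually need it since you redo the computation anyway.
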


\begin{proof}
Consider first the case $m=0$. Recall that there are precisely
$ \ca  ( \ca -1)^{n-1}$ strings of length $n$, and at
least $ \ca  ( \ca -1)^{n-2} ( \ca -2)$ of
these are joinable. Hence, the likelihood ratio
$d\lambda_{n,0}/d\nu_{n,0}$ is bounded above by
$( \ca - 1)/( \ca -2)$ and below by $1$. This proves
\eqref{eq:ac} for $m=0$. The case $m=1$ follows similarly.

The general case $m\geq 2$ follows from the exponential ergodicity
estimates \eqref{eq:exponentialErgodicity} by an argument much like
that used to prove Lemma~\ref{lemma:expSeparation}. For any string
$x_{1}x_{2}\dotsb x_{n-m}$ with initial letter $x_{1}=a$,
\[
	\nu_{n,m} (x_{1}x_{2}\dotsb x_{n-m})=
	\frac{1}{ \ca }\prod_{i=1}^{n-m-1}p (x_{i},x_{i+1}).
\]
Similarly, by Lemma~\ref{lemma:MC}, 
\[
	\lambda_{n,m}  (x_{1}x_{2}\dotsb x_{n-m})=
	\frac{1}{ \ca } \left( \prod_{i=1}^{n-m-1}p
	(x_{i},x_{i+1})\right)
	\frac{\sum_{b\not =x_{1}^{-1}}p_{k}
	(x_{n-m},b)}{ \ca ^{-1}\sum_{a}\sum_{b\not =a^{-1}}p_{n} (a,b)} .
\]
Inequality \eqref{eq:exponentialErgodicity} implies that the last
fraction in this expression is between the bounding fractions in
\eqref{eq:ac}. The bound on the total variation distance
between the two measures follows routinely.
\end{proof}

\begin{corollary}\label{corollary:meanEstB}
Let $\mathbf{X}$ be a stationary Markov chain with transition
probability matrix $\zz{P}$.
Assume that the functions $h_{k}$ satisfy hypotheses (H0)--(H3) of
section~\ref{sec:strategy}. Then for all $k,i\geq 1$,
\begin{equation}\label{eq:meanEstB}
	E|h_{k} (\mathbf{X},\tau^{i}\mathbf{X})|\leq C\beta^{k}
\end{equation}
\end{corollary}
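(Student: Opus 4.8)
The plan is to deduce the bound for the stationary Markov chain from hypothesis (H3), which is phrased for the uniform measures $\lambda_{n}$ on finite joinable strings, by using the total-variation comparison of Lemma~\ref{lemma:tv}. The essential point is that, by (H2), the random variable $h_{k}(\mathbf{X},\tau^{i}\mathbf{X})$ depends only on the coordinates $X_{1},\dots,X_{k}$ of $\mathbf{X}$ and $X_{i+1},\dots,X_{i+k}$ of $\tau^{i}\mathbf{X}$, hence only on the random string $X_{1}X_{2}\cdots X_{i+k}$; and by Lemma~\ref{lemma:MC} the law of that string under $\nu$ is the uniform distribution $\nu_{i+k}$ on $\mathcal{S}_{i+k}$. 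So $E|h_{k}(\mathbf{X},\tau^{i}\mathbf{X})|$ is an average of $|h_{k}|$ over a finite set, and it remains to compare this average against the corresponding average under $\lambda_{N}$ for large $N$.

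First I would fix $k$ and $i$ and, for each integer $N>i+k$, put $m=N-(i+k)\ge 1$. In the notation of Lemma~\ref{lemma:tv}, $\nu_{N,m}$ is the law of $X_{1}\cdots X_{i+k}$ under $\nu$ (which equals $\nu_{i+k}$ by Lemma~\ref{lemma:MC}), and $\lambda_{N,m}$ is the law of the first $i+k$ letters of a uniformly random joinable string of length $N$. Since $h_{k}$ is a function of $(i+k)$-strings (by (H2), using $i+k<N$, so that the cyclic permutation $\sigma^{i}\alpha$ of a length-$N$ string has the same first $k$ letters as the two-sided shift $\tau^{i}$ — no wrap-around occurs), we have $E_{\nu_{N,m}}|h_{k}|=E|h_{k}(\mathbf{X},\tau^{i}\mathbf{X})|$ and $E_{\lambda_{N,m}}|h_{k}|=E_{\lambda_{N}}|h_{k}(\alpha,\sigma^{i}\alpha)|$, and the latter is $\le C\beta^{k}$ by (H3) (here $N\ge k$ and $1\le i<N$, so (H3) applies).

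For $N$ large enough that $g\theta^{m}<1$, Lemma~\ref{lemma:tv} gives $d\lambda_{N,m}/d\nu_{N,m}\ge (1-g\theta^{m})/(1+g\theta^{m})>0$, so $\nu_{N,m}\ll\lambda_{N,m}$ with $d\nu_{N,m}/d\lambda_{N,m}\le (1+g\theta^{m})/(1-g\theta^{m})$. Integrating $|h_{k}|$ against $\nu_{N,m}$ by changing measure to $\lambda_{N,m}$ therefore yields
\[
E|h_{k}(\mathbf{X},\tau^{i}\mathbf{X})| \;\le\; \frac{1+g\theta^{m}}{1-g\theta^{m}}\,E_{\lambda_{N}}|h_{k}(\alpha,\sigma^{i}\alpha)| \;\le\; \frac{1+g\theta^{m}}{1-g\theta^{m}}\,C\beta^{k}.
\]
Letting $N\to\infty$ with $k,i$ fixed drives $m=N-(i+k)\to\infty$, hence $\theta^{m}\to 0$ and $(1+g\theta^{m})/(1-g\theta^{m})\to 1$; since the left side does not depend on $N$, this gives \eqref{eq:meanEstB}. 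There is no genuine obstacle here; the only point requiring care is the bookkeeping that matches the finite-string permutation $\sigma^{i}\alpha$ of (H3) with the shift $\tau^{i}\mathbf{X}$, which is legitimate precisely because (H2) localizes $h_{k}$ to finitely many coordinates and the comparison length $N$ may be taken arbitrarily large, so that no wrap-around occurs and the total-variation error $g\theta^{m}$ can be driven to zero.
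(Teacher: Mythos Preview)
Your proof is correct and follows essentially the same approach as the paper's: both localize $h_{k}(\mathbf{X},\tau^{i}\mathbf{X})$ to the first $i+k$ coordinates via (H2), then invoke Lemma~\ref{lemma:tv} to transfer the (H3) bound from $\lambda_{N}$ to $\nu$ by letting $N\to\infty$. The only cosmetic difference is that the paper phrases the transfer via the total-variation convergence \eqref{eq:tv} (implicitly using the uniform bound (H1) on $|h_{k}|$), whereas you use the Radon--Nikodym bound \eqref{eq:ac} directly to obtain a multiplicative factor tending to $1$; both are contained in Lemma~\ref{lemma:tv}.
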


\begin{proof}
The function  $h_{k} (\mathbf{x},\tau^{i}\mathbf{x})$ is a
function only of the coordinates $x_{1}x_{2}\dotsb x_{i+k}$, and so
for any joinable string $\mathbf{X}$ of length $>i+k$,
\[
	h_{k} (\mathbf{x},\tau^{i}\mathbf{x})=h_{k} (\mathbf{x},\sigma
	^{i}\mathbf{x}).
\]
By Lemma \ref{lemma:tv}, the difference in total
variation norm between the distributions of the substring $x_{1}x_{2}\dotsb x_{i+k}$
under the measures $\lambda_{n}$ and $\nu_{n}$ converges to $0$ as $n
\rightarrow \infty$. Therefore,
\[
	E|h_{k} (\mathbf{X},\tau^{i}\mathbf{X})|=\lim_{ n \rightarrow
	\infty} E_{\lambda_{n}}|h_{k} (\alpha ,\sigma^{i}\alpha )|\leq C\beta^{k}.
\]
\end{proof}

Now we are in a position to compare the distribution of the  random
variable $N (\alpha )$ under $\mu_{n}$ with the distribution of a
corresponding random variable $N^{S}_{n}$ on the sequence space
$\mathcal{S}_{\infty }$ under the measure $\nu $. The function $N^{S}_{n}$
is defined by
\begin{equation}\label{eq:NSdef}
	N^{S}_{n} (\mathbf{x})=\sum_{i=1}^{n}\sum_{j=i+1}^{n}
	\sum_{k=1}^{\infty} h_{k}
	(\tau^{i}\mathbf{x},\tau^{j}\mathbf{x}). 
\end{equation}

\begin{proposition}\label{proposition:close}
Assume that the functions $h_{k}$ satisfy hypotheses (H0)--(H3), and
let $\kappa =\sum_{k=1}^{\infty }EH_{k} (\mathbf{X})$.
Let $F_{n}$ be the distribution of the random variable
$(N(\alpha)-n^{2}\kappa )/n^{3/2}$ under the uniform probability
measure $\mu_{n}$ on $\mathcal{F}_{n}$, and $G_{n}$ the distribution
of $( N^{S}_{n} (\mathbf{x})-\kappa n^{2})/n^{3/2}$ under $\nu$. Then
for any metric $\varrho$ that induces the topology of weak convergence
on probability measures,
\begin{equation}\label{eq:close}
	\lim_{n \rightarrow \infty} \varrho (F_{n},G_{n})=0.
\end{equation}
Consequently, $F_{n}\Rightarrow \Phi_{\sigma }$ if and only if
$G_{n}\Rightarrow \Phi_{\sigma }$. 
\end{proposition}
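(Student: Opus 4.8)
The plan is to reduce the comparison between $F_n$ and $G_n$ to the two approximation steps flagged in Section~\ref{ssec:j-to-s}. First I would replace the necklace measure $\mu_n$ by the joinable-string measure $\lambda_n$: by Lemma~\ref{lemma:tv-necklaces-JS} the total variation distance between $\lambda_n$ and $\mu_n\circ p_n^{-1}$ tends to zero, and since $N(\alpha)$ is constant on the fibers of $p_n$, Lemma~\ref{lemma:tools} of the Appendix gives that the distribution of $(N(\alpha)-n^2\kappa)/n^{3/2}$ under $\mu_n$ and under $\lambda_n$ differ by $o(1)$ in $\varrho$. So it suffices to compare the distribution of $N(\alpha)$ under $\lambda_n$ with that of $N^S_n(\mathbf{x})$ under $\nu$.

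Next, fix a cutoff $m=m_n=\lfloor n^{1/2-\varepsilon}\rfloor$ for a small $\varepsilon>0$, and split $N(\alpha)$ into the ``bulk'' part, in which all three indices $i,j,i+k$ lie in $[1,n-m]$, plus a remainder. Using hypothesis (H1) (boundedness $|h_k|\le C$) together with (H3)/Corollary~\ref{corollary:meanEstB} (the $\beta^k$ decay), I would bound the expected number of triples $(i,j,k)$ contributing to the remainder: the pairs $(i,j)$ with $i$ or $j$ within $m$ of the endpoint number $O(nm)$, and the geometric tail over $k$ contributes another $O(n)$. Hence the remainder is $O_{L^1}(nm)=o(n^{3/2})$, so after dividing by $n^{3/2}$ it is negligible in probability, uniformly under both $\lambda_n$ and $\nu$. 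The bulk part depends only on the substring $\alpha_1\cdots\alpha_{n-m}$ (here one uses (H2)), and on that substring Lemma~\ref{lemma:tv} says $\|\lambda_{n,m}-\nu_{n,m}\|_{TV}\le 2\bigl((1+g\theta^{m})/(1-g\theta^{m})-1\bigr)\to 0$ since $\theta<1$ and $m\to\infty$. A function of two random variables whose laws are close in total variation has close distributions (again Lemma~\ref{lemma:tools}), so the bulk part has the same limiting distribution under $\lambda_n$ and under $\nu$. Finally, under $\nu$ the same remainder estimate shows the bulk part differs from $N^S_n(\mathbf{x})$ by $o(n^{3/2})$ in $L^1$, closing the chain of comparisons and yielding \eqref{eq:close}.

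The only subtlety worth dwelling on is that under $\nu$ the inner sum in \eqref{eq:NSdef} runs over all $k\ge 1$, not just $k\le n$, so I must separately control $\sum_{i<j}\sum_{k>n-m}h_k(\tau^i\mathbf{x},\tau^j\mathbf{x})$; but Corollary~\ref{corollary:meanEstB} gives $E|h_k(\mathbf{X},\tau^{j-i}\mathbf{X})|\le C\beta^k$ uniformly in $i,j$, so this tail has $L^1$-norm at most $C n^2\beta^{n-m}/(1-\beta)=o(1)$, which is more than enough. The one genuine obstacle is making the total-variation transfer on the substring interact correctly with the $n^{3/2}$ scaling: total-variation closeness of the laws of $\alpha_1\cdots\alpha_{n-m}$ transfers closeness of the laws of \emph{any} measurable function of that substring, including the (unnormalized and unbounded) bulk statistic, so no quantitative control of the function is needed there — one just needs the remainder bounds to be $o(n^{3/2})$ under \emph{both} measures, and the estimate $O(nm)=o(n^{3/2})$ is exactly what the choice $m=n^{1/2-\varepsilon}$ buys. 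The concluding equivalence $F_n\Rightarrow\Phi_\sigma\iff G_n\Rightarrow\Phi_\sigma$ is then immediate from \eqref{eq:close} and the fact that $\varrho$ metrizes weak convergence.
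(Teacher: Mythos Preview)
Your approach is essentially the paper's: pass from $\mu_n$ to $\lambda_n$ via Lemma~\ref{lemma:tv-necklaces-JS}, truncate in $k$ and in the index range using (H1)--(H3) and Corollary~\ref{corollary:meanEstB}, then invoke Lemma~\ref{lemma:tv} on the surviving substring. One small slip: your ``bulk'' condition should be $j+k\le n-m$ rather than $i+k\le n-m$ (since $i<j$, it is the window $[j+1,j+k]$ that reaches furthest), otherwise the bulk does not depend only on $\alpha_1\cdots\alpha_{n-m}$; with that correction your remainder estimate $O_{L^1}(nm)=o(n^{3/2})$ goes through exactly as you describe.
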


\begin{proof}
Let $F_{n}'$ be the distribution of the random variable
$(N(\alpha)-n^{2}\kappa )/n^{3/2}$ under the uniform probability
measure $\lambda_{n}$ on $\mathcal{J}_{n}$. By
Lemma~\ref{lemma:tv-necklaces-JS}, the total variation distance
between $\lambda_{n}$ and $\mu_{n}\circ p_{n}^{-1}$ is vanishingly
small for large $n$. Hence, by Lemma~\ref{lemma:tools} and the fact
that total variable distance is never increased by mapping
(cf. inequality \eqref{eq:mappingPrinciple} of the Appendix),
\[
	\lim_{n \rightarrow \infty }\varrho (F_{n},F_{n}')=0.
\]
Therefore, it suffices to prove  \eqref{eq:close} with $F_{n}$
replaced by $F_{n}'$. 

Partition the sums \eqref{eq:generalForm} and \eqref{eq:NSdef} as
follows. Fix $0<\delta <1/2$  and  set $m=m (n)=[n^{\delta}]$. By
hypothesis (H3) and Corollary  \eqref{corollary:meanEstB}, 
\begin{align*}
	E_{\mu }\sum_{i=1}^{n}\sum_{j=i+1}^{n}\sum_{k>m (n)} |h_{k}
	(\tau^{i}\mathbf{x},\tau^{j}\mathbf{x})| &\leq C n^{2}\beta^{m (n)}
	\quad \text{and}\\ 
	E_{\lambda_{n}}\sum_{i=1}^{n}\sum_{j=i+1}^{n}\sum_{k>m
	(n)}|h_{k} (\sigma^{i}\alpha ,\sigma^{j}\alpha )|  &\leq C
	n^{2}\beta^{m (n)} .
\end{align*}
These upper bounds are rapidly decreasing in $n$.  Hence, by Markov's
inequality (i.e., the crude bound $P\{|Y|>\varepsilon \}\leq
E|Y|/\varepsilon$), the distributions of both of the sums converge
weakly to $0$ as $n \rightarrow \infty$. Thus, by
Lemma~\ref{lemma:tools}, to prove the proposition it suffices to prove
that
\[
	\lim_{n \rightarrow \infty }\varrho (F_{n}^{A},G_{n}^{A})=0
\]
where $F_{n}^{A}$ and $G_{n}^{A}$ are the distributions of the
truncated sums obtained by replacing the inner sums in
\eqref{eq:generalForm} and \eqref{eq:NSdef} by the sums over $1\leq
k\leq m (n)$. 

The outer sums in \eqref{eq:generalForm} and \eqref{eq:NSdef} are over
pairs of indices $1\leq i<j\leq n$. Consider those pairs for which
$j>n-2m (n)$: there are only $2nm (n)$ of these. Since $nm (n)=O
(n^{1+\delta })$ and $\delta <1/2$, and since each term in
\eqref{eq:generalForm} and \eqref{eq:NSdef} is bounded in absolute
value by a constant $C$ (by Hypothesis (H1)), the sum over those index
pairs $i<j$ with $n-2m (n)<j\leq n$ is $o (n^{3/2})$. Hence, by
Lemma~\ref{lemma:tools}, it suffices to prove that 
\[
	\lim_{n \rightarrow \infty }\varrho (F_{n}^{B},G_{n}^{B})=0
\]
where $F_{n}^{B}$ and $G_{n}^{B}$ are the distributions under
$\lambda_{n}$ and $\nu $ of the sums \eqref{eq:generalForm} and
\eqref{eq:NSdef} with the limits of summation changed to
$i<j<n-2m (n)$ and $k\leq m (n)$. Now if $i<j<n-2m (n)$ and $k\leq m
(n)$ then $h_{k} (\tau^{i}\mathbf{x},\tau^{j}\mathbf{x})$ and $h_{k}
(\sigma^{i}\alpha ,\sigma^{j}\alpha)$ depend only on the first
$n-n(m)$ entries of $\mathbf{x}$ and $\alpha $. Consequently, the
distributions $F_{n}^{B}$ and $G_{n}^{B}$ are the distributions of the
sums
\[
	\sum_{i=1}^{n-2m (n)} \sum_{j=i+1}^{n-2m (n)}\sum_{k\leq m (n)} h_{k}
	(\tau^{i}\mathbf{x},\tau^{j}\mathbf{x}) 
\]
under the probability measures $\lambda_{n,m}$ and $\nu_{n,m}$,
respectively, where  $\lambda_{n,m}$ and $\nu_{n,m}$ are as defined in
Lemma~\ref{lemma:tv}. But the total variation distance between
$\lambda_{n,m}$ and $\nu_{n,m}$ converges to zero, by
Lemma~\ref{lemma:tv}. Therefore, by the mapping principle
\eqref{eq:mappingPrinciple} and Lemma~\ref{lemma:tools}, 
\[
	\varrho (F_{n}^{B},G_{n}^{B}) \longrightarrow 0.
\]

\end{proof}

\subsection{Mean Estimates}\label{ssec:meanEstimates}

In this section we show that  the hypothesis (H3) is satisfied by the
functions $h_{k}=u_{k}+v_{k}$, where $u_{k}$ and $v_{k}$ are as in
Proposition~\ref{proposition:chas}.

\begin{lemma}\label{lemma:meanEstimatesJS}
Let $\sigma^{i} $ be the $i$th cyclic shift on the set
$\mathcal{J}_{n}$ of joinable sequences $\alpha $. There exists
$C<\infty$ such that  for all $2\leq k\leq n$
and $0\leq i<j<n$, 
\begin{equation}\label{eq:meanEstimatesJS}
	\begin{array}{rcl}
E_{\lambda_{n}}u_{k} (\sigma ^{i}\beta ,\sigma ^{j}\beta)&\leq &C\theta^{k/2}\quad \text{and}\\ 
E_{\lambda_{n}}v_{k} (\sigma ^{i}\beta ,\sigma ^{j}\beta)&\leq &C\theta^{k/2}.%\\
%Eu_{k} (\sigma^{i}\beta ,\sigma^{j}\beta)&\leq &(1+\varepsilon_{0}) \chi^{2-k},\\
%Ev_{k} (\sigma^{i}\beta ,\sigma^{j}\beta)&\leq &(1+\varepsilon_{0}) \chi^{( 2-k)/2}
\end{array}
\end{equation}
\end{lemma}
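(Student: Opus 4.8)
The plan is to estimate the probability, under the uniform measure $\lambda_n$ on joinable strings, that the ``matching condition'' defining $u_k$ (conditions (a)--(b) in section~\ref{sec:combinatorics}, together with the cyclic-order condition) holds for the pair $(\sigma^i\beta,\sigma^j\beta)$. Recall $u_k(\omega,\omega')=0$ unless $\omega$ and $\omega'$ agree in positions $2,\dots,k-1$ and differ in positions $1$ and $k$; so the event $\{u_k(\sigma^i\beta,\sigma^j\beta)\ne 0\}$ forces the length-$(k-2)$ block of $\beta$ starting just after position $i$ to coincide with the corresponding block starting just after position $j$. My first step is to reduce from $\lambda_n$ to the Markov measure $\nu$ (equivalently, to $\nu_n$): by Lemma~\ref{lemma:tv} the distribution of any fixed-length initial substring under $\lambda_n$ has Radon--Nikodym derivative with respect to $\nu_n$ bounded by $(1+g\theta^m)/(1-g\theta^m)$, which is bounded uniformly once $m\ge 2$; since $u_k,v_k$ depend only on $2k$ coordinates and these can be rotated to an initial segment, it suffices to bound $E_\nu u_k(\tau^i\mathbf X,\tau^j\mathbf X)$ and $E_\nu v_k(\tau^i\mathbf X,\tau^j\mathbf X)$ up to a constant factor.

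Next I would exploit the mixing estimate of Lemma~\ref{lemma:expSeparation}. Write $r=j-i$ for the gap between the two windows. The two blocks of interest are $X_{i+1}\dots X_{i+k}$ and $X_{j+1}\dots X_{j+k}$; think of the matching requirement ``$X_{i+1+t}=X_{j+1+t}$ for $1\le t\le k-3$'' (the internal agreement for $u_k$, and for $v_k$ after the reversal-and-bar operation, which is a bijection of $\mathcal G$ and so does not change probabilities). There are two regimes. If $r\ge k$, the two windows are disjoint (or separated); by Lemma~\ref{lemma:expSeparation} the joint law of the two windows is within a factor $1\pm g\theta^{r-k}$ of the product of the marginals, and under the product of two independent stationary strings the probability that two independent reduced strings of length $k$ agree in a prescribed set of $k-2$ interior coordinates is at most $\theta^{k-2}$ (each coordinate of the second string, conditioned on the first, hits a prescribed value with probability $\le\theta$, since $p(a,b)\in\{0,\theta\}$). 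That already gives $\le C\theta^{k}\le C\theta^{k/2}$ in this regime. If $r<k$, the windows overlap and the matching condition becomes a \emph{periodicity} constraint: $X_{i+1+t}=X_{i+1+t+r}$ for the relevant range of $t$, which forces the string to be nearly $r$-periodic over a stretch of length about $k-r$. Along an interval of length $L$ the Markov chain stays on a prescribed periodic pattern with probability $\le\theta^{L}$, so this event has probability $\le C\theta^{k-r}$. Combining, the probability is $\le C\max(\theta^{r},\theta^{k-r})\le C\theta^{k/2}$ uniformly in $r$ (the bound is worst, $\theta^{k/2}$, when $r\approx k/2$). Since $u_k,v_k\in\{0,1\}$, the expectation equals this probability, and \eqref{eq:meanEstimatesJS} follows with $\beta=\theta^{1/2}$.

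There is one case needing separate care, namely when the two windows are so close that $i+k>j$ \emph{and} additionally the windows wrap around the end of the cyclic word (or when $j$ is within $O(k)$ of $n$); but then one can either appeal to the cyclic symmetry of $\lambda_n$ to rotate the pair into a standard position, or absorb these $O(nk)$ degenerate index pairs into the constant. The main obstacle I anticipate is handling the overlapping regime $r<k$ cleanly: one must check that the combinatorial conditions (a)--(b) really do impose a genuine periodicity constraint of the claimed length (rather than collapsing to something vacuous because, e.g., the forced coincidences are consistent with a shorter period), and that the ``stays on a periodic pattern'' probability bound $\theta^{L}$ is legitimate given that the chain's transition probabilities are $0$ or $\theta$ — here one uses that along a reduced string each step has at most one ``forbidden'' continuation, so demanding a specific next letter always costs a factor $\le\theta$ unless it is the forbidden one, in which case the probability is $0$ and the bound holds trivially. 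Once that bookkeeping is in place, the estimate is routine.
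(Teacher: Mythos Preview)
Your overall plan is sound in spirit but the execution has two concrete problems, one arithmetical and one structural.

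\textbf{The arithmetical slip.} In the overlapping regime $r<k$ you say the periodicity constraint holds ``over a stretch of length about $k-r$'' and conclude the bound $\theta^{k-r}$; you then assert $\max(\theta^{r},\theta^{k-r})\le C\theta^{k/2}$. But with $\theta<1$ one has $\max(\theta^{r},\theta^{k-r})=\theta^{\min(r,k-r)}\ge\theta^{k/2}$, so the displayed inequality is backward --- for $r=1$, say, your bound collapses to $\theta$. The fix is that your length count is off: the equalities $\beta_{i+t}=\beta_{i+t+r}$ for $t=2,\dots,k-1$ force $r$-periodicity on the interval $[i+2,i+r+k-1]$, of length $r+k-2$, not $k-r$. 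Counting reduced strings that are $r$-periodic on such an interval gives probability $\le\theta^{k-2}$ (the first $r$ letters are free, the remaining $k-2$ are prescribed, each costing a factor $\le\theta$). With this corrected bound no ``$\max$'' juggling is needed: both the disjoint and overlapping regimes give $\le C\theta^{k-2}$, which is even stronger than $\theta^{k/2}$.

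\textbf{The structural gap.} That stronger bound, however, is \emph{false} once the cyclic windows wrap around, which happens whenever $k$ exceeds roughly $n/2$. For example with $i=0$, $j=n/2$, $k$ close to $n$, the $k-2$ coincidences collapse modulo $n$ to only about $n/2$ independent constraints (the necklace becomes $(n/2)$-periodic), so the probability is only $\approx\theta^{n/2}$, not $\theta^{k-2}$. Your reduction to $\nu$ via Lemma~\ref{lemma:tv} does not cover this regime (the event depends on letters near both ends of the cyclic word), and neither of your proposed remedies works: rotation cannot eliminate wraparound when $k>n/2$, and you cannot ``absorb $O(nk)$ degenerate pairs into the constant'' because the lemma asserts a bound for \emph{each} pair $(i,j)$, not an aggregate.

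The paper sidesteps all of this by staying on $\mathcal{J}_n$ and counting directly: after using cyclic and reversal invariance of $\lambda_n$ to normalize to $i=0$, $j\le n/2$, one observes that the matching condition forces at least (roughly) $k/2$ coincidences among the letters of $\alpha$ even in the wraparound regime, so the number of joinable strings with $u_k(\alpha,\sigma^j\alpha)\ne 0$ is $\le C(g-1)^{n-k/2}$. Dividing by $|\mathcal{J}_n|\sim C_{\mathcal J}(g-1)^n$ gives \eqref{eq:meanEstimatesJS} in one stroke, with no reduction to $\nu$ and no case analysis. Your mixing/periodicity machinery is correct for the non-wraparound indices but is heavier than needed; for the full statement you would still need a direct count in the wraparound case, at which point the detour through $\nu$ has bought nothing.
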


\begin{proof}
Because the measure $\lambda_{n}$ is invariant under both cyclic
shifts and the reversal function, it suffices to prove the estimates
only for the case where one of the indices $i,j$ is $0$. If the proper
choice is made ($i=0$ and $j\leq n/2$), then a necessary condition for
$u_{k} (\alpha ,\sigma^{j}\alpha )\not =0$ is that the strings $\alpha
$ and $\sigma^{j}\alpha $ agree in their second through their
$(k-1)/2$th slots. By routine counting arguments (as in
section~\ref{ssec:counts}) it can be shown that the number of joinable
strings of length $n$ with this property is bounded above by $C
(g-1)^{n-k/2}$, where $C<\infty$ is a constant independent of both $n$
and $k\leq n$. This proves the first inequality. A similar argument
proves the second.
\end{proof}

\section{U-Statistics of Markov chains}\label{sec:u-statistics}

Proposition~\ref{proposition:close} implies that for large $n$ the
distribution $F_{n}$ considered in Theorem~\ref{theorem:general} is
close in the weak topology to the distribution $G_{n}$ of the random
variable $N^{S}_{n}$ defined by \eqref{eq:NSdef} under the Markov
measure $\nu$. Consequently, if it can be shown that $G_{n}\Rightarrow
\Phi_{\sigma}$ then the conclusion $F_{n}\Longrightarrow
\Phi_{\sigma}$ will follow, by Lemma~\ref{lemma:tools} of the
Appendix. This will prove Theorem~\ref{theorem:general}.

Random variables of the form \eqref{eq:NSdef} are known generically in
probability theory as \emph{U-statistics} (see
\cite{hoeffding}). Second order $U-$statistics of Markov chains are
defined as follows.  Let $\mathbf{Z}=Z_{1}Z_{2}\dotsb $ be a
stationary, aperiodic, irreducible Markov chain on a finite state
space $\mathcal{A}$ with transition probability matrix $\zz{Q}$ and
stationary distribution $\pi$.  Let $\tau $ be the forward shift on
sequence space $\mathcal{A}^{\zz{N}}$. The $U-$statistics of order $2$
with kernel $h:\mathcal{A}^{\zz{N}}\times
\mathcal{A}^{\zz{N}}\rightarrow \zz{R}$ are the random variables
\[
	W_{n}=\sum_{i=1}^{n}\sum_{j=i+1}^{n} h
	(\tau^{i}\mathbf{Z},\tau^{j}\mathbf{Z}) .
\]
The Hoeffding projection of a kernel $h$ is the function
$H:\mathcal{A}^{\zz{N}} \rightarrow \zz{R}$ defined by
\[
	H (\mathbf{z})=Eh (\mathbf{z},\mathbf{Z}).
\]

\begin{theorem}\label{theorem:U-CLT}
Suppose that $h=\sum_{k=1}^{\infty }h_{k}$ where $\{h_{k} \}_{k\geq
1}$ is a sequence of kernels satisfying hypotheses (H0)--(H2) and the
following: There exist constants $C<\infty $ and $0<\beta <1$ such
that for all $k,i\geq 1$,
\begin{equation}\label{eq:hypExpDecay}
	E|h_{k} (\mathbf{Z},\tau^{i}\mathbf{Z})|\leq C\beta^{k}.
\end{equation}
Then as $n \rightarrow \infty$,
\begin{equation}\label{eq:target}
	\frac{W_{n}-n^{2}\kappa}{n^{3/2}}\Longrightarrow \Phi_{\sigma }
\end{equation}
where the constants $\kappa $ and $\sigma^{2}$ are
\begin{equation}\label{eq:constantsMostGeneral}
		\kappa = \sum_{k=1}^{\infty } EH_{k} (\mathbf{Z})
\quad \text{and} \quad \sigma^{2}=\lim_{n \rightarrow \infty}
\frac{1}{n}E\left(\sum_{i=1}^{n}\sum_{k=1}^{\infty}H_{k} ( \tau^{i}
\mathbf{Z}) -n\kappa \right)^{2} .
\end{equation}
 \end{theorem}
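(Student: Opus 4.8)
The plan is to reduce the $U$-statistic $W_n$ to a sum of its Hoeffding projection terms plus a negligible remainder, then apply a central limit theorem for (a suitable functional of) the Markov chain $\mathbf{Z}$. Concretely, write $h=\sum_k h_k$ and let $H_k(\mathbf{z})=E h_k(\mathbf{z},\mathbf{Z})$ be the per-level Hoeffding projections, with $H=\sum_k H_k$. The classical Hoeffding decomposition for $U$-statistics expresses
\[
	W_n = (n-1)\sum_{i=1}^{n} H(\tau^i\mathbf{Z}) - \binom{n}{2}\,2\,EH(\mathbf{Z}) + R_n,
\]
where $R_n=\sum_{i<j}\widetilde h(\tau^i\mathbf{Z},\tau^j\mathbf{Z})$ is built from the ``degenerate'' (completely centered) part $\widetilde h(\mathbf{z},\mathbf{z}')=h(\mathbf{z},\mathbf{z}')-H(\mathbf{z})-H(\mathbf{z}')+Eh$. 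Using $\kappa=\sum_k EH_k(\mathbf{Z})=EH(\mathbf{Z})$ one gets $n^2\kappa = (n-1)\cdot n\kappa + n\kappa$, so that
\[
	\frac{W_n-n^2\kappa}{n^{3/2}} = \frac{n-1}{n^{3/2}}\Bigl(\sum_{i=1}^n H(\tau^i\mathbf{Z})-n\kappa\Bigr) + \frac{R_n - (\text{lower order})}{n^{3/2}}.
\]
The first term, after noting $(n-1)/n^{3/2}\sim n^{-1/2}$, is exactly $n^{-1/2}\sum_{i=1}^n(H(\tau^i\mathbf{Z})-\kappa)$ up to $o(1)$, which is the object whose variance appears in \eqref{eq:constantsMostGeneral}.

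The linear term is handled by the CLT for additive functionals of finite-state Markov chains. Here the one subtlety is that $H=\sum_k H_k$ need not depend on only finitely many coordinates; but by \eqref{eq:hypExpDecay} and (H1), $|EH_k(\mathbf{Z})|\le C\beta^k$ and more importantly the tail $\sum_{k>K}H_k$ has $L^2$-norm bounded (via (H1), (H2) and a covariance estimate using the exponential mixing of Lemma~\ref{lemma:exponentialErgodicity}, exactly as in the proof of Lemma~\ref{lemma:expSeparation}) by $C\beta^{K}$-type quantities. So $H\in L^2(\nu)$, and the partial sums $\sum_{i=1}^n H(\tau^i\mathbf{Z})$ can be approximated in $L^2$ by the finite-range functions $\sum_{i=1}^n H^{(K)}(\tau^i\mathbf{Z})$ with $H^{(K)}=\sum_{k\le K}H_k$, uniformly well as $K\to\infty$. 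For each fixed $K$ the function $H^{(K)}$ depends on boundedly many coordinates, so $\{H^{(K)}(\tau^i\mathbf{Z})\}$ is a bounded functional of an ergodic finite-state Markov chain and the standard martingale-approximation (Gordin) CLT applies, giving convergence to Normal$(0,\sigma_K^2)$ with $\sigma_K^2=\lim_n \frac1n\var(\sum_{i\le n}H^{(K)}(\tau^i\mathbf{Z}))$. A standard "approximation in distribution" argument (the $L^2$ bound controls the variance of the discrepancy uniformly, hence controls the Prokhorov distance via Chebyshev) upgrades this to $n^{-1/2}\sum_{i=1}^n(H(\tau^i\mathbf{Z})-\kappa)\Rightarrow$ Normal$(0,\sigma^2)$ with $\sigma^2=\lim_K\sigma_K^2$, which one checks equals the expression in \eqref{eq:constantsMostGeneral}; that the limit exists and is finite follows from absolute summability of the autocovariances $\cov(H(\mathbf{Z}),H(\tau^i\mathbf{Z}))$, itself a consequence of exponential mixing together with the $\beta^k$-decay.

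The main obstacle — and the part deserving genuine care — is showing that the degenerate remainder term is negligible at scale $n^{3/2}$, i.e. $R_n = o_P(n^{3/2})$. It suffices to prove $E R_n^2 = o(n^3)$. Expanding, $ER_n^2=\sum_{i<j}\sum_{i'<j'}E[\widetilde h(\tau^i\mathbf{Z},\tau^j\mathbf{Z})\widetilde h(\tau^{i'}\mathbf{Z},\tau^{j'}\mathbf{Z})]$, and the point is that because $\widetilde h$ is completely degenerate ($E[\widetilde h(\mathbf{z},\mathbf{Z})]=0$ for a.e.\ $\mathbf{z}$ under $\pi$), a summand is small unless the two index pairs are "close" in the sense that the gaps between consecutive indices are small; the exponential mixing of the chain (Lemma~\ref{lemma:exponentialErgodicity}/\ref{lemma:expSeparation}) converts degeneracy plus a large gap into an exponentially small correlation, while (H1) bounds everything uniformly and the $\beta^k$ tail bounds of (H3)/\eqref{eq:hypExpDecay} handle the truncation $h\approx\sum_{k\le K}h_k$. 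Counting the index configurations $(i,j,i',j')$ for which no gap exceeds a threshold shows the number of non-negligible quadruples is $O(n^{2+\varepsilon})=o(n^3)$, which gives the bound. Combining: $\frac{W_n-n^2\kappa}{n^{3/2}}$ equals the linear term plus $o_P(1)$, and the linear term converges to Normal$(0,\sigma^2)=\Phi_\sigma$ as claimed; Slutsky's lemma finishes the proof.
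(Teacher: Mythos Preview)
Your approach is essentially the paper's --- Hoeffding decomposition into a linear part handled by the Markov CLT plus a degenerate remainder shown to be $o_P(n^{3/2})$, with truncation $h \approx \sum_{k\le K} h_k$ to reduce to finite-range kernels. The paper orders the steps differently: it truncates first, proves the CLT for each $W_n^K$, and then uses the covariance bounds of its Lemma~\ref{lemma:varianceBounds} and Corollary~\ref{corollary:varEstimates} to get $\var(W_n - W_n^K) \le \varepsilon n^3$ for $K$ large, combining via Lemma~\ref{lemma:Btools}. For the centered piece of each $W_n^K$ the paper uses a different tool than your direct second-moment bound: after reducing to a one-letter kernel (Lemma~\ref{lemma:K1}) it diagonalizes the symmetric operator $L_{h^*}f(z) = \sum_{z'} h^*(z,z')f(z')\pi(z')$ on $\ell^2(\mathcal{A},\pi)$, writes the centered $T_n^*$ as a finite sum $\sum_k \lambda_k\bigl(\sum_i\varphi_k(Z_i)\bigr)^2$, and concludes from the joint Markov CLT that $T_n^*/n$ converges in law to a quadratic form in Gaussians, so in particular $T_n^*/n^{3/2}\to 0$ (Proposition~\ref{proposition:centeredKernel}). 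This spectral route sidesteps the case analysis your sketch glosses over: your claim that a summand of $ER_n^2$ is small ``unless the gaps between consecutive indices are small'' is not quite right --- when only the \emph{middle} gap among the four sorted indices is large, no single index-block is isolated from the other three, so degeneracy of $\widetilde h$ cannot be invoked and the summand remains $\Theta(1)$. There are $O(n^2)$ such configurations, and they dominate. Your conclusion $ER_n^2 = o(n^3)$ (in fact $O_K(n^2)$ after truncation) is nonetheless correct, so the argument can be completed along your lines; also note the minor slip that the constant in your Hoeffding identity should be $\binom{n}{2}\,Eh$, not $2\binom{n}{2}\,EH$.
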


There are similar theorems in the literature, but all require some
degree of additional continuity of the kernel $h$.  In the special
case where all but finitely many of the functions $h_{k}$ are
identically $0$ the result is a special case of Theorem~1 of
\cite{denker-keller} or Theorem~2 of \cite{iosifescu}.  If the
functions $h_{k}$ satisfy the stronger hypothesis that $|h_{k}|\leq
C\beta^{k}$ pointwise then the result follows (with some work) from
Theorem~2 of \cite{iosifescu}. Unfortunately, the special case of
interest to us, where $h_{k}=u_{k}+v_{k}$ and $u_{k},v_{k}$ are the
functions defined in sec.~\ref{sec:combinatorics}, does not satisfy
this hypothesis.

The rest of section~\ref{sec:u-statistics} is devoted to the
proof. The main step is to reduce the problem to the special case where
all but finitely many of the functions $h_{k}$ are identically $0$ by
approximation; this is where the hypothesis \eqref{eq:hypExpDecay}
will be used. The special case, as already noted, can be deduced from
the results of \cite{denker-keller} or \cite{iosifescu}, but
instead we shall give a short and elementary argument.

In proving Theorem~\ref{theorem:U-CLT} we can assume that all of the
Hoeffding projections $H_{k}$ have mean 
\[
	 EH_{k} (\mathbf{Z})=0,
\]
because subtracting a constant from both $h$ and $\kappa $ has no
effect on the validity of the theorem. Note that this does \emph{not}
imply that $Eh_{k} (\tau^{i}\mathbf{Z},\tau^{j}\mathbf{Z})=0$, but it
does imply (by Fubini's theorem) that if $\mathbf{Z}$ and
$\mathbf{Z}'$ are independent copies of the Markov chain then
\[
	Eh_{k} (\mathbf{Z},\mathbf{Z}')=0.
\]

\subsection{Proof in the special case}\label{ssec:special}
If all but finitely many of the functions $h_{k}$ are $0$ then for
some  finite value of $K$  the kernel $h$ depends only on the first
$K$ entries of its arguments. 

\begin{lemma}\label{lemma:K1}
Without loss of generality, we can assume that $K=1$.
\end{lemma}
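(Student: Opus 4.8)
The plan is to pass from a kernel $h$ depending on the first $K$ coordinates of each argument to a kernel depending only on a single coordinate, at the cost of enlarging the state space. Concretely, I would replace the Markov chain $\mathbf{Z}=Z_1Z_2\dotsb$ on $\mathcal{A}$ with transition matrix $\zz{Q}$ by the ``block'' chain $\mathbf{Z}'=Z'_1Z'_2\dotsb$ on the state space $\mathcal{A}'\subset\mathcal{A}^{K}$ consisting of all length-$K$ admissible blocks, where $Z'_i=(Z_i,Z_{i+1},\dots,Z_{i+K-1})$. This is again a stationary, aperiodic, irreducible Markov chain (aperiodicity and irreducibility of the block chain follow from those of the original, since $\zz{Q}^m$ has strictly positive entries for some $m$, hence so does the $m$-step matrix of the block chain once $m$ is large enough), with some transition matrix $\zz{Q}'$ and stationary distribution $\pi'$ equal to the law of a length-$K$ block under $\pi$.

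Next I would define the single-coordinate kernel $h'$ on $(\mathcal{A}')^{\zz{N}}\times(\mathcal{A}')^{\zz{N}}$ by $h'(\mathbf{w},\mathbf{w}')=h(\mathbf{w}_{1},\mathbf{w}'_{1})$, reading a block $\mathbf{w}_1\in\mathcal{A}^K$ as the initial segment of a sequence; since $h$ depended only on the first $K$ coordinates of its arguments, $h'$ is well-defined and genuinely a function of the first coordinate of each of its arguments. The key identity is then that, under the coupling $Z'_i=(Z_i,\dots,Z_{i+K-1})$,
\[
h'(\tau^{i}\mathbf{Z}',\tau^{j}\mathbf{Z}') = h(\tau^{i}\mathbf{Z},\tau^{j}\mathbf{Z})
\]
for all $i<j$, so the $U$-statistics agree: $W'_n(\mathbf{Z}')$ built from $h'$ equals $W_n(\mathbf{Z})$ built from $h$, at least up to the $O(1)$ discrepancy coming from the top $K$ indices (the indices $j$ with $j>n-K$, of which there are $O(K)=O(1)$ many, each contributing a term bounded by $C$ via (H1)); such an $o(n^{3/2})$ correction does not affect the limit law, by Lemma~\ref{lemma:tools}. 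One also checks that the Hoeffding projection behaves correctly: $H'(\mathbf{w})=E h'(\mathbf{w},\mathbf{Z}')=E h(\mathbf{w}_1,\mathbf{Z})=H(\mathbf{w}_1)$, so $EH'(\mathbf{Z}')=EH(\mathbf{Z})$ and the candidate constants $\kappa,\sigma^2$ in \eqref{eq:constantsMostGeneral} are unchanged. Hence \eqref{eq:target} for $h$ on $\mathbf{Z}$ is equivalent to \eqref{eq:target} for $h'$ on $\mathbf{Z}'$, which is the $K=1$ case.

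The main obstacle here is not any single estimate but the bookkeeping required to see that every hypothesis is preserved under the block construction and that the two $U$-statistics really do coincide up to a negligible boundary term. In particular one must be careful that ``$h$ depends on the first $K$ coordinates'' translates exactly into ``$h'$ depends on the first coordinate'' after the relabeling, and that hypotheses (H0) (symmetry) and (H1) (boundedness) transfer verbatim while (H2) becomes the statement being used; the exponential-decay hypothesis \eqref{eq:hypExpDecay} is vacuous in the finitely-supported special case but, if one wanted the reduction to hold in general, would also transfer since $h'_k$ can be taken to be $h_{k}$ read on blocks. Since the paper only invokes Lemma~\ref{lemma:K1} within the special case, it suffices to record the construction, verify the coincidence of $W_n$ and $W'_n$ up to $o(n^{3/2})$, note the equality of Hoeffding projections and hence of $\kappa$ and $\sigma^2$, and conclude via Lemma~\ref{lemma:tools}.
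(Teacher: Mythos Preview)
Your proposal is correct and follows essentially the same approach as the paper: both pass to the block (sliding-window) chain $Z'_i=(Z_i,\dots,Z_{i+K-1})$ and observe that the original $U$-statistic becomes a $U$-statistic on the block chain with a kernel depending only on the first coordinate, with $\kappa$ and $\sigma^2$ unchanged. Your boundary-term caveat is in fact unnecessary (since the block chain is defined on the full infinite sequence the identification $W'_n=W_n$ is exact), but it is harmless; otherwise your write-up supplies more detail than the paper's brief sketch.
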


\begin{proof}
If $Z_{1}Z_{2}\dotsb $ is a stationary Markov chain, then
so is the sequence $Z_{1}^{K}Z_{2}^{K}\dotsb $ where
\[
	Z_{i}^{K}=Z_{i}Z_{i+1}\dotsb Z_{i+K}
\]
is the length-$(K+1)$ word obtained by concatenating the $K+1$ states of
the original Markov chain following $Z_{i}$. Hence, the $U-$statistics
$W_{n}$ can be represented as $U-$statistics on a different Markov
chain with kernel depending only on the first entries of its
arguments.  It is routine to check that the constants $\kappa $ and
$\sigma^{2}$ defined by \eqref{eq:constantsMostGeneral} for the chain
$Z^{K}_{n}$ equal those defined by \eqref{eq:constantsMostGeneral} for
the original chain.
\end{proof}

Assume now that $h$ depends only on the  first entries of its
arguments. Then the Hoeffding projection $H$ also depends only on the
first entry of its argument, and can be written as
\[
	H (z)=Eh (z,Z_{1})=\sum_{z'\in \mathcal{A}} h (z,z') \pi (z').
\]
Since the Markov chain $Z_{n}$
stationary and ergodic, the covariances $EH (Z_{i})H (Z_{i+n})=EH
(Z_{1})H (Z_{1+n})$ decay exponentially in $n$, so the limit
\begin{equation}\label{eq:limitVariance}
	\sigma^{2}:=\lim_{n \rightarrow
	\infty}\frac{1}{n}E\left(\sum_{j=1}^{n}H (Z_{j}) \right)^{2} 
\end{equation}
exists and is nonnegative. It is an elementary fact that
$\sigma^{2}>0$ unless $H\equiv 0$.  Say that the kernel $h$ is
\emph{centered} if this is the case. If $h$
is not centered then the adjusted kernel
\begin{equation}\label{eq:kernelAdjusted}
	h^{*} (z,z'):=h (z,z') -H (z)-H (z')
\end{equation}
is centered, because its Hoeffding projection satisfies
\begin{align*}
	H^{*} (z):&=Eh^{*} (z,Z_{1}) \\
\notag    &=Eh (z,Z_{1}) -EH (z)-EH (Z_{1})\\
\notag    &=H (z)-H (z) -0.
\end{align*}

Define
\[
	T_{n}=\sum_{i=1}^{n}\sum_{j=1}^{n} h (Z_{i},Z_{j})
	\quad \text{and} \quad 
	D_{n}=\sum_{i=1}^{n} h (Z_{i},Z_{i});
\]
then since the kernel $h$ is symmetric,
\begin{equation}\label{eq:wts}
	W_{n}=\frac{1}{2} (T_{n}-D_{n}).
\end{equation}

\begin{proposition}\label{proposition:centeredKernel}
If $h$ is centered, then 
\begin{equation}\label{eq:centeredConvergence}
	T_{n}/n \Longrightarrow  Q
\end{equation}
where $Q$ is a quadratic form in no more than $m=|\mathcal{Z}|$ independent,
standard normal random variables.
\end{proposition}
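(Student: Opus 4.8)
The plan is to exploit the fact that a centered kernel $h$ on the finite state space $\mathcal{A}$ is, up to the Hoeffding projection (which vanishes by assumption), a ``degenerate'' kernel, so that $T_n$ has the structure of a degenerate second-order $U$-statistic. Concretely, I would first diagonalize the situation algebraically. Since $h$ depends only on the first coordinate, write $h(z,z')$ as a symmetric $m\times m$ matrix $M=(h(a,b))_{a,b\in\mathcal{A}}$ where $m=|\mathcal{A}|$. Introduce the empirical occupation counts $L_n(a)=\sum_{i=1}^n \mathbf{1}\{Z_i=a\}$ and the centered, normalized versions $S_n(a)=(L_n(a)-n\pi(a))/\sqrt{n}$. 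Then one checks the identity
\[
	T_n=\sum_{a,b} h(a,b)L_n(a)L_n(b),
\]
and expanding $L_n(a)=n\pi(a)+\sqrt{n}\,S_n(a)$ gives
\[
	T_n = n^2\!\!\sum_{a,b}h(a,b)\pi(a)\pi(b)
	  + 2n^{3/2}\!\!\sum_{a,b}h(a,b)\pi(a)S_n(b)
	  + n\!\!\sum_{a,b}h(a,b)S_n(a)S_n(b).
\]
The first term is $n^2$ times $E h(Z,Z')=0$, so it vanishes. The middle term is $2n^{3/2}\sum_b H(b)S_n(b)$, and since $h$ is centered, $H\equiv 0$, so the middle term vanishes as well. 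Hence $T_n/n=\sum_{a,b}h(a,b)S_n(a)S_n(b)$ exactly, with no remainder.

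The second step is a central limit theorem for the vector $S_n=(S_n(a))_{a\in\mathcal{A}}$. Since $\mathbf{Z}$ is a stationary, aperiodic, irreducible finite-state Markov chain, the standard multivariate CLT for additive functionals (e.g.\ via the Markov chain CLT, or by writing $S_n$ as a sum of martingale differences after a Poisson-equation correction) gives $S_n\Rightarrow \mathcal{N}(0,\Gamma)$ for a covariance matrix $\Gamma$ determined by the chain. Note $S_n$ is supported on the hyperplane $\{\sum_a v(a)=0\}$ (since $\sum_a L_n(a)=n$), so $\Gamma$ has rank at most $m-1$; this is consistent with the claim of ``no more than $m$'' normal variables.

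The third step is the continuous mapping theorem: the map $v\mapsto \sum_{a,b}h(a,b)v(a)v(b)=v^{T}Mv$ is a continuous (indeed polynomial) function $\zz{R}^{m}\to\zz{R}$, so
\[
	T_n/n = S_n^{T}MS_n \Longrightarrow G^{T}MG =: Q,
\]
where $G\sim\mathcal{N}(0,\Gamma)$. Writing $G=\Gamma^{1/2}\xi$ with $\xi$ a standard normal vector in $\zz{R}^{m}$ (padding with independent coordinates if $\Gamma$ is singular) exhibits $Q=\xi^{T}(\Gamma^{1/2}M\Gamma^{1/2})\xi$ as a quadratic form in at most $m$ i.i.d.\ standard normals, which is exactly the assertion. (Diagonalizing the symmetric matrix $\Gamma^{1/2}M\Gamma^{1/2}$ further rewrites $Q=\sum_{\ell}c_{\ell}\xi_{\ell}^{2}$ for eigenvalues $c_{\ell}$, if a ``weighted chi-square'' form is preferred.)

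The only genuinely substantive ingredient is the multivariate Markov chain CLT for $S_n$ in the second step; everything else is exact algebra or the continuous mapping theorem. Even that ingredient is classical for finite-state aperiodic irreducible chains, so I expect no real obstacle, though some care is needed to state $\Gamma$ correctly (it is the asymptotic covariance $\lim_n n^{-1}\mathrm{Cov}(L_n)$, which for a Markov chain is \emph{not} simply the one-step covariance but includes the usual sum of lagged autocovariances). The one place to be slightly careful is confirming that the cross term really does vanish identically rather than merely in the limit — this relies on the exact identity $\sum_{a,b}h(a,b)\pi(a)S_n(b)=\sum_b H(b)S_n(b)$ together with $H\equiv 0$, and on $E h(Z,Z')=0$ for the $n^2$ term, both of which follow from the standing assumption that $h$ is centered.
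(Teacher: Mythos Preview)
Your argument is correct and follows essentially the same line as the paper: both reduce $T_n$ to a quadratic form in centered Markov-chain partial sums and then invoke the multivariate CLT for finite-state Markov chains together with the continuous mapping theorem. The only cosmetic difference is that the paper first diagonalizes the integral operator $L_h f(z)=\sum_{z'} h(z,z')f(z')\pi(z')$ on $\ell^2(\mathcal{A},\pi)$ to obtain $T_n=\sum_{k\geq 2}\lambda_k\bigl(\sum_i \varphi_k(Z_i)\bigr)^2$ (the eigenvalue $\lambda_1=0$ for the constant eigenfunction encodes exactly your observation that the $n^2$ and $n^{3/2}$ terms vanish), whereas you work directly with the occupation-count vector $S_n$ and defer any diagonalization to the very last step.
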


\begin{proof}
Consider the linear operator $L_{h}$ on $\ell^{2} (\mathcal{Z},\pi)$
defined by
\[
	L_{h}f (z):=\sum_{z'\in \mathcal{Z}} h (z,z')f (z')\pi (z').
\]
This operator is symmetric (real Hermitean), and consequently has a
complete set of orthonormal real eigenvectors $\varphi_{j} (z)$ with
real eigenvalues $\lambda_{j}$. Since $h$ is centered, the
constant function $\varphi_{1}:=1/\sqrt{m}$ is an eigenvector with
eigenvalue $\lambda_{1}=0$; therefore, all of the other eigenvectors
$\varphi_{j}$, being orthogonal to $\varphi_{1}$, must have mean
zero. Hence, since $\lambda_{1}=0$,
\[
	h (z,z')=\sum_{j=2}^{m} \lambda_{j}\varphi_{j} (z)\varphi_{j} (z'),
\]
and so
\begin{align}\label{eq:spectralRepresentation}
	T_{n}&=\sum_{k=2}^{m}\lambda_{k}\sum_{i=1}^{n}\sum_{j=1}^{n}
	\varphi_{k} (Z_{i}) \varphi_{k} (Z_{j})\\
\notag 	&=\sum_{k=2}^{m}\lambda_{k} \left(\sum_{i=1}^{n}\varphi_{k}
		(Z_{i}) \right)^{2}. 
\end{align}
Since each $\varphi_{k}$ has mean zero and variance $1$ relative to
$\pi$, the central limit theorem for Markov chains implies that as $n
\rightarrow \infty$,
\begin{equation}\label{eq:eigfnCLT}
	\frac{1}{\sqrt{n}}\sum_{i=1}^{n}\varphi_{k} (Z_{i})
	\Longrightarrow \text{Normal}
	(0,\sigma^{2}_{{k}}), 
\end{equation}
with limiting variances $\sigma_{k}^{2}\geq 0$. In fact, these
normalized sums 
converge \emph{jointly}\footnote{Note, however, that the normalized
sums in \eqref{eq:eigfnCLT} need not be asymptotically independent for
different $k$, despite the fact that the different functions
$\varphi_{k}$ are uncorrelated relative to $\pi$. This is because the
arguments $Z_{i}$ are serially correlated: in particular, even though
$\varphi_{k} (Z_{i})$ and $\varphi_{l} (Z_{i})$ are uncorrelated, the
random variables $\varphi_{k} (Z_{i})$ and $\varphi_{l} (Z_{i+1})$
might well be correlated.} (for $k=2,3,\dotsc ,m$) to a multivariate
normal distribution with marginal variances $\sigma_{k}^{2}\geq 0$.
The result therefore
follows from the spectral representation
\eqref{eq:spectralRepresentation}.
\end{proof}

\begin{corollary}\label{corollary:non-centered}
If $h$ is not centered, then with $\sigma^{2}>0$ as defined in
\eqref{eq:limitVariance}, 
\begin{equation}\label{eq:U-CLT}
	W_{n}/n^{3/2} \Longrightarrow 
	\text{Normal} (0,\sigma^{2}) .
\end{equation}
\end{corollary}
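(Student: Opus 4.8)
The plan is to deduce Corollary~\ref{corollary:non-centered} from Proposition~\ref{proposition:centeredKernel} together with the classical central limit theorem for finite-state Markov chains, using the decomposition \eqref{eq:wts} of $W_n$ and the splitting of $h$ into its Hoeffding projection and the centered remainder \eqref{eq:kernelAdjusted}. We work in the reduced setting following Lemma~\ref{lemma:K1}, so that $h$ depends only on the first entries of its arguments, $H(z)=Eh(z,Z_1)$, and $EH(Z_1)=0$ (as arranged in the reduction preceding subsection~\ref{ssec:special}).

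First, the diagonal sum $D_n=\sum_{i=1}^n h(Z_i,Z_i)$ satisfies $|D_n|\le Cn$, since only finitely many $h_k$ are nonzero and each is bounded by (H1); hence $D_n/n^{3/2}\to 0$ deterministically. Next, substitute $h(z,z')=h^{*}(z,z')+H(z)+H(z')$ into $T_n=\sum_{i=1}^{n}\sum_{j=1}^{n}h(Z_i,Z_j)$. Each of the two linear terms sums to $n\sum_{i=1}^{n}H(Z_i)$, so with
\[
	T_n^{*}:=\sum_{i=1}^{n}\sum_{j=1}^{n}h^{*}(Z_i,Z_j), \qquad S_n:=\sum_{i=1}^{n}H(Z_i),
\]
we get $T_n=T_n^{*}+2nS_n$, and therefore by \eqref{eq:wts}
\[
	W_n=\tfrac12 T_n^{*}+nS_n-\tfrac12 D_n .
\]
Since $h^{*}$ is centered, Proposition~\ref{proposition:centeredKernel} gives $T_n^{*}/n\Rightarrow Q$, whence $T_n^{*}/n^{3/2}=(T_n^{*}/n)\,n^{-1/2}\Rightarrow 0$.

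It remains to identify the limit of $nS_n/n^{3/2}=S_n/\sqrt{n}$. Because $\mathbf{Z}$ is a stationary, aperiodic, irreducible Markov chain on a finite state space and $EH(Z_1)=0$, the Markov-chain central limit theorem (the one invoked for the eigenfunctions in \eqref{eq:eigfnCLT}) gives $S_n/\sqrt{n}\Rightarrow\text{Normal}(0,\sigma^2)$, where $\sigma^2$ is exactly the limiting variance \eqref{eq:limitVariance}; and $\sigma^2>0$ precisely because $h$ is not centered, i.e. $H\not\equiv 0$. Feeding these into $W_n/n^{3/2}=\tfrac12 T_n^{*}/n^{3/2}+S_n/\sqrt{n}-\tfrac12 D_n/n^{3/2}$ and applying Slutsky's theorem yields $W_n/n^{3/2}\Rightarrow\text{Normal}(0,\sigma^2)$, which is \eqref{eq:U-CLT}. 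There is no real obstacle here: all of the analytic work has been done in Proposition~\ref{proposition:centeredKernel} and the classical CLT, and the only substantive observation is that the $n^{3/2}$-scale fluctuations of $W_n$ are produced entirely by the linear statistic $S_n$, while the genuinely quadratic part $T_n^{*}$ and the diagonal $D_n$ both live on the smaller scale $n$.
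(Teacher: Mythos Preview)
Your proof is correct and follows essentially the same route as the paper: the decomposition $W_n=\tfrac12 T_n^{*}+nS_n-\tfrac12 D_n$, the use of Proposition~\ref{proposition:centeredKernel} to kill $T_n^{*}/n^{3/2}$, and the Markov-chain CLT for $S_n/\sqrt{n}$ are exactly what the paper does. The only cosmetic difference is that you dispose of $D_n/n^{3/2}$ by the trivial bound $|D_n|\le Cn$, whereas the paper invokes the ergodic theorem; your argument is the simpler one.
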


\begin{proof}
Recall that $W_{n}= (T_{n}-D_{n})/2$.
By the ergodic theorem, $\lim_{n \rightarrow \infty }D_{n}/n =Eh
(Z_{1},Z_{1})$ almost surely, so $D_{n}/n^{3/2}\Rightarrow 0$. Hence,
by Lemma~\ref{lemma:tools} of the Appendix, it suffices to prove that
if $h$ is not centered then
\begin{equation}\label{eq:Tnondeg}
	T_{n}/n^{3/2} \Longrightarrow \text{Normal} (0,4\sigma^{2})
\end{equation}

Define the {centered} kernel  $h^{*}$ as in
\eqref{eq:kernelAdjusted}. Since the Hoeffding projection of $H^{*}$
is identically $0$,
\begin{gather*}
	T_{n}=T^{*}_{n}+2n\sum_{i=1}^{n}H (Z_{i}) \quad \text{where}\\
	T^{*}_{n}=\sum_{i=1}^{n}\sum_{j=1}^{n} h^{*} (Z_{i},Z_{j}).
\end{gather*}
Proposition~\ref{proposition:centeredKernel} implies that
$T^{*}_{n}/n$ converges in distribution, and it follows that
$T^{*}_{n}/n^{3/2}$ converges to $0$ in distribution. On the other
hand, the central limit theorem for Markov chains implies that 
\[
	n^{-3/2}\left( 2n\sum_{i=1}^{n}H (Z_{i})\right)
	\Longrightarrow  \text{Normal}
	(0,4\sigma^{2}) ,
\]
with $\sigma^{2}>0$, since by hypothesis the kernel $h$ is not
centered. The weak convergence \eqref{eq:Tnondeg} now follows by
Lemma~\ref{lemma:tools}.
\end{proof}

\subsection{Variance/covariance bounds}\label{ssec:varBounds}

To prove Theorem~\ref{theorem:U-CLT} in the general case we will
show that truncation of the kernel $h$, that is,
replacing $h=\sum_{k=1}^{\infty }h_{k}$ by
$h^{K}=\sum_{k=1}^{K}h_{k}$, has only a small effect on the
distributions of the normalized random variables $W_{n}/n^{3/2}$ when
$K$ is large. For this we will use second moment bounds. To deduce
these from the first-moment hypothesis \eqref{eq:hypExpDecay} we shall
appeal to the fact that any aperiodic, irreducible, finite-state
Markov chain is exponentially mixing. Exponential mixing is
expressed in the same manner as for the Markov chain considered in
section~\ref{ssec:mixingRate}.  For any finite subset $J\subset \zz{N}$, let
$Z_{J}= (Z_{j})_{j\in J} $ denote the
restriction of $\mathbf{Z}$ to the index set $J$, and denote by
$\mu_{J}$ the distribution of $Z_{J}$. If $I,J$ are nonoverlapping
subsets of $\zz{N}$ then both $\mu_{I\cup J}$ and $\mu_{I}\times
\mu_{J}$ are probability measures supported by $\mathcal{A}^{I\cup
J}$. If the distance between the sets $I$ and $J$ is at least $m_{*}$,
where $m_{*}$ is the smallest integer such that all entries of
$\zz{Q}^{m_{*}}$ are positive, then  $\mu_{I\cup J}$ and $\mu_{I}\times
\mu_{J}$ are mutually absolutely continuous.

\begin{lemma}\label{lemma:expMixingGeneral}
There exist  constants $C<\infty $ and $0<\delta  <1$ such that for any
two subsets $I,J\subset \zz{N}$ satisfying $\min (J)-\max (I)=m\geq
m_{*}$,
	
\[
	1-C\delta^{m} \leq \frac{d\mu_{I\cup J}}{d\mu_{I}\times
	\mu_{J}} \leq 1+C\delta^{m}.
\]
\end{lemma}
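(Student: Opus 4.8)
The plan is to reduce to the case where $I$ and $J$ are intervals and then read off the bound from the exponential convergence of $\zz{Q}^m$ to the stationary matrix, exactly in the spirit of the proof of Lemma~\ref{lemma:expSeparation}. First I would observe that it suffices to treat intervals: if $I' \supseteq I$ and $J' \supseteq J$ are the smallest intervals containing $I$ and $J$, then $\mu_{I\cup J}$ is obtained from $\mu_{I'\cup J'}$ by marginalizing over the coordinates in $(I'\cup J')\setminus(I\cup J)$, and likewise $\mu_I\times\mu_J$ is obtained from $\mu_{I'}\times\mu_{J'}$ by the same marginalization; since summing out variables cannot push a Radon--Nikodym derivative outside the interval $[1-C\delta^m,\,1+C\delta^m]$ (a conditional average of numbers in an interval stays in that interval — this is the argument already invoked at the start of the proof of Lemma~\ref{lemma:expSeparation}), the interval case implies the general case. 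By stationarity we may then take $I=[1,n]$ and $J=[n+m,\,n+m+q]$ for some $q\ge 0$.

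Next I would write out both densities explicitly against counting measure on $\mathcal{A}^{I\cup J}$. For a sequence $z$ restricted to $I\cup J$,
\[
\mu_I\times\mu_J(z)=\pi(z_1)\Bigl(\prod_{j=1}^{n-1}q(z_j,z_{j+1})\Bigr)\,\pi(z_{n+m})\Bigl(\prod_{j=n+m}^{n+m+q-1}q(z_j,z_{j+1})\Bigr),
\]
while
\[
\mu_{I\cup J}(z)=\pi(z_1)\Bigl(\prod_{j=1}^{n-1}q(z_j,z_{j+1})\Bigr)\,q_m(z_n,z_{n+m})\Bigl(\prod_{j=n+m}^{n+m+q-1}q(z_j,z_{j+1})\Bigr),
\]
where $q_m$ denotes the $m$-step transition probabilities. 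Hence on the common support the Radon--Nikodym derivative equals $q_m(z_n,z_{n+m})/\pi(z_{n+m})$. So the lemma is equivalent to the statement that there are constants $C<\infty$ and $\delta\in(0,1)$ with $|q_m(a,b)/\pi(b)-1|\le C\delta^m$ for all $a,b$ and all $m\ge m_*$.

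Finally I would deduce this uniform ratio bound from the Perron--Frobenius / ergodic theory of finite aperiodic irreducible stochastic matrices. Since $\zz{Q}^{m_*}$ has all entries strictly positive, $\zz{Q}$ is a primitive matrix, and by the Perron--Frobenius theorem its spectral radius is $1$ with a one-dimensional eigenspace; all other eigenvalues have modulus $\rho<1$. Writing $\zz{Q}^m=\Pi+R^m$, where $\Pi$ is the rank-one matrix with every row equal to $\pi$ and $R=\zz{Q}-\Pi$ satisfies $R\Pi=\Pi R=0$ and $R^m=\zz{Q}^m-\Pi$, one gets $\|\zz{Q}^m-\Pi\|\le C_0\,\rho_1^m$ for any $\rho_1\in(\rho,1)$ and some $C_0<\infty$ (absorbing the polynomial factor from possible Jordan blocks). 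Therefore $|q_m(a,b)-\pi(b)|\le C_0\rho_1^m$, and dividing by $\pi(b)\ge \pi_{\min}:=\min_b\pi(b)>0$ gives $|q_m(a,b)/\pi(b)-1|\le (C_0/\pi_{\min})\rho_1^m$. Taking $\delta=\rho_1$ and $C=C_0/\pi_{\min}$ yields the claim. The only mildly delicate point — the ``main obstacle,'' such as it is — is the reduction from arbitrary finite sets to intervals; but that is handled exactly as in Lemma~\ref{lemma:expSeparation}, by the observation that marginalizing a measure replaces a likelihood ratio by a weighted average of likelihood ratios, and averages of numbers in $[1-C\delta^m,1+C\delta^m]$ stay in $[1-C\delta^m,1+C\delta^m]$. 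Everything else is the standard quantitative ergodic theorem for finite Markov chains.
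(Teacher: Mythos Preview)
Your proposal is correct and follows essentially the same route the paper indicates: reduce to intervals by marginalization (as in Lemma~\ref{lemma:expSeparation}), compute the Radon--Nikodym derivative as $q_{m}(a,b)/\pi(b)$, and bound this ratio via the Perron--Frobenius theorem for the primitive matrix $\zz{Q}$. The paper gives no more than this sketch, so your write-up is in fact more detailed than, but entirely consistent with, the intended argument.
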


The proof is nearly identical to that of
Lemma~\ref{lemma:expSeparation}, except that the exponential
convergence bounds of Lemma~\ref{lemma:exponentialErgodicity} must be
replaced by corresponding bounds for the transition probabilities of
$\mathbf{Z}$. The corresponding bounds are gotten from the
Perron-Frobenius theorem.

For any two random variables $U,V$ denote by $\cov (U,V)=E (UV) -EUEV$
their covariance. (When $U=V$ the covariance $\cov (U,V)=\var (U)$.)

\begin{lemma}\label{lemma:varianceBounds}
 For any two pairs $i<j$ and $i'<j'$ of indices, let $\Delta =\Delta
(i,i',j,j')$ be the distance between the sets $\{i,j \}$ and $\{i',j'
\}$ (that is, the minimum distance between one of $i,j$ and one of
$i',j'$). Then for all $\Delta \geq \max (k,k')+m_{*}$,
\begin{equation}\label{eq:varianceBounds}
	|\cov( h_{k} (\tau^{i}\mathbf{Z} ,\tau^{j}\mathbf{Z}) ,h_{k'}
	(\tau^{i'}\mathbf{Z} ,\tau^{j'}\mathbf{Z}))| \leq 
	C\beta ^{k+k'-4} \varrho_{\Delta -\max(k,k')}
\end{equation}
where
\[
	\varrho_{m}=
 	\left(\frac{1+ C \delta ^{m-1}}{1- C\delta ^{m-1}}
	\right)^{5} -1 \quad \text{for} \;\; m\geq m_{*}.
\]
\end{lemma}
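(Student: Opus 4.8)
The starting point is that, by hypothesis (H2), the random variable $F:=h_k(\tau^i\mathbf{Z},\tau^j\mathbf{Z})$ is a function of the coordinates of $\mathbf{Z}$ in the set $\mathcal{I}:=B_i\cup B_j$, where $B_i=\{i+1,\dots,i+k\}$, $B_j=\{j+1,\dots,j+k\}$, while $G:=h_{k'}(\tau^{i'}\mathbf{Z},\tau^{j'}\mathbf{Z})$ is a function of $Z_{\mathcal{I}'}$, where $\mathcal{I}'=B_{i'}\cup B_{j'}$ with $B_{i'},B_{j'}$ the analogous length-$k'$ blocks. An elementary computation --- using that $B_i$ extends only to the right of $i$, and so on --- shows that the hypothesis $\Delta\ge\max(k,k')+m_{*}$ forces all four \emph{cross} distances $\mathrm{dist}(B_i,B_{i'}),\mathrm{dist}(B_i,B_{j'}),\mathrm{dist}(B_j,B_{i'}),\mathrm{dist}(B_j,B_{j'})$ to be at least $m:=\Delta-\max(k,k')\ge m_{*}$. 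The within-pair distances $\mathrm{dist}(B_i,B_j)$ and $\mathrm{dist}(B_{i'},B_{j'})$ may be small; the proof never uses them.

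Because $\mathcal{I},\mathcal{I}'$ are at positive distance, $\mu_{\mathcal{I}\cup\mathcal{I}'}$ and $\mu_{\mathcal{I}}\times\mu_{\mathcal{I}'}$ are mutually absolutely continuous; put $\Lambda:=d\mu_{\mathcal{I}\cup\mathcal{I}'}/d(\mu_{\mathcal{I}}\times\mu_{\mathcal{I}'})$. Since $F$ depends on the $\mathcal{I}$-coordinates alone and $G$ on the $\mathcal{I}'$-coordinates alone, $\int FG\,d(\mu_{\mathcal{I}}\times\mu_{\mathcal{I}'})=E[F]\,E[G]$, so that
\[
 \cov(F,G)=E[FG]-E[F]E[G]=\int FG\,(\Lambda-1)\,d(\mu_{\mathcal{I}}\times\mu_{\mathcal{I}'}),
\]
whence $|\cov(F,G)|\le\|\Lambda-1\|_{\infty}\,E|F|\,E|G|$. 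By shift-invariance of $\nu$ and hypothesis \eqref{eq:hypExpDecay}, $E|F|=E|h_k(\mathbf{Z},\tau^{j-i}\mathbf{Z})|\le C\beta^{k}$ and likewise $E|G|\le C\beta^{k'}$. Thus the lemma follows once we show $\|\Lambda-1\|_{\infty}\le\varrho_m-1$ with $\varrho_m=(1+C\delta^{m})^{5}$: combined with the two moment bounds this gives $|\cov(F,G)|\le C^{2}\beta^{k+k'}(\varrho_m-1)\le C'\beta^{k+k'-4}\varrho_{m}$, which is even stronger than the asserted inequality (the factor $\beta^{-4}$ and the replacement of $\varrho_m-1$ by $\varrho_m$ are harmless slack).

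To estimate $\Lambda$, list the at most four blocks in increasing order as $J_1<\dots<J_r$ and expand $\mu_{\mathcal{I}\cup\mathcal{I}'}$ by the Markov property into the product of the internal-transition factors of the $J_s$ and the gap factors $p_{g_s}(\cdot,\cdot)/\pi(\cdot)$, $g_s:=\mathrm{dist}(J_s,J_{s+1})$; expand $\mu_{\mathcal{I}}$ and $\mu_{\mathcal{I}'}$ similarly, keeping the two blocks of each pair amalgamated so that every gap factor occurring spans a cross distance $\ge m$. Dividing, all internal factors cancel, leaving $\Lambda$ as a product of factors $p_{g}(\cdot,\cdot)/\pi(\cdot)$ and reciprocals of such, with each $g\ge m\ge m_{*}$, so $|p_{g}(a,b)/\pi(b)-1|\le C\delta^{m}$ by the Perron--Frobenius bound underlying Lemma~\ref{lemma:exponentialErgodicity}. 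Running over the finitely many orders of the four blocks, the extremal case is the interleaved order $B_i,B_{i'},B_j,B_{j'}$ (or its reverse): then $\mu_{\mathcal{I}\cup\mathcal{I}'}$ contributes three such factors (across the three cross gaps) while $\mu_{\mathcal{I}}\times\mu_{\mathcal{I}'}$ contributes two (the gap inside each pair jumps over an intervening block, still $\ge m$), and every other order contributes fewer. Hence $\Lambda$ lies between $(1-C\delta^{m})^{3}/(1+C\delta^{m})^{2}$ and $(1+C\delta^{m})^{3}/(1-C\delta^{m})^{2}$; using $1/(1-x)\le1+2x$ for $0\le x\le1/2$ --- legitimate once $m_{*}$ is enlarged so that $C\delta^{m_{*}}\le1/2$ --- this interval lies inside $[\varrho_m^{-1},\varrho_m]$, and since $(\varrho_m-1)^{2}\ge0$ gives $\varrho_m^{-1}\ge2-\varrho_m$, we conclude $\|\Lambda-1\|_{\infty}\le\varrho_m-1$.

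The only genuinely delicate point is this last step --- keeping the two same-pair blocks amalgamated while decoupling only across cross-pair gaps, and checking via the short case analysis on the order of the four blocks that the accumulated power of $(1+C\delta^{m})$ never exceeds $5$. The rest --- the reduction to $\Lambda$, the two first-moment bounds, and the Markov expansion --- is routine.
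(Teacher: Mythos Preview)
Your proof is correct and follows essentially the same route as the paper's: reduce to bounding the Radon--Nikodym derivative $d\mu_{\mathcal{I}\cup\mathcal{I}'}/d(\mu_{\mathcal{I}}\times\mu_{\mathcal{I}'})$ via the Markov factorization, then run through the possible orderings of the four blocks, with the interleaved order giving the worst (five-factor) bound. Your formulation via $\cov(F,G)=\int FG\,(\Lambda-1)\,d(\mu_{\mathcal{I}}\times\mu_{\mathcal{I}'})$ is actually a bit cleaner than the paper's ratio $E[FG]/(E[F]E[G])$ and directly produces the factor $\varrho_m-1$ rather than $\varrho_m$ --- which is what the subsequent Corollary genuinely needs, since $\varrho_m\to1$ does not by itself decay in $m$.
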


\begin{remark}\label{remark:covEst}
What is important is that the covariances decay exponentially in both
$k+k'$ and $\Delta$; the rates will not matter. When $\Delta \leq \max
( k,k')+m_{*}$ the bounds \eqref{eq:varianceBounds} do not
apply. However, in this case, since the functions $h_{k}$ are bounded
above in absolute value by a constant $C<\infty$ independent of $k$
(hypothesis (H1)), the Cauchy-Schwartz inequality implies
\begin{align*}
	|\cov( h_{k} (\tau^{i}\mathbf{Z} ,\tau^{j}\mathbf{Z}) ,h_{k'}
	(\tau^{i'}\mathbf{Z} ,\tau^{j'}\mathbf{Z}))|^{2}
	&= ( E h_{k} (\tau^{i}\mathbf{Z} ,\tau^{j}\mathbf{Z})h_{k'}
	(\tau^{i'}\mathbf{Z} ,\tau^{j'}\mathbf{Z})) )^{2}\\
	&\leq ( E h_{k} (\tau^{i}\mathbf{Z} ,\tau^{j}\mathbf{Z})Eh_{k'}
	(\tau^{i'}\mathbf{Z} ,\tau^{j'}\mathbf{Z})) )^{2} \\
	&\leq C^{2} E h_{k} (\tau^{i}\mathbf{Z} ,\tau^{j}\mathbf{Z})Eh_{k'}
	(\tau^{i'}\mathbf{Z} ,\tau^{j'}\mathbf{Z})) \\
	&\leq C_{*}\beta^{k+k'},
\end{align*}
the last by the first moment hypothesis \eqref{eq:hypExpDecay}.
\end{remark}

\begin{proof}
[Proof of Lemma \ref{lemma:varianceBounds}]
Since the random variables
$h_{k}$ are functions only of the first $k$ letters of
their arguments, the covariances can be calculated by 
averaging against the measures $\mu_{J\cup K}$, where 
\[
	J=[i,i+k]\cup [j,j+k] \quad \text{and} \quad K=[i',i'+k']\cup [j',j+k'].
\]
The simplest case is where $j+k<i'$; in this case the result of
Lemma \ref{lemma:expMixingGeneral} applies directly, because the sets $J$
and $K$ are separated by $m=\Delta -k$.  Since the functions
$h_{k}$ are uniformly bounded, Lemma~\ref{lemma:expMixingGeneral} implies
\[
	\frac{1- C  \beta^{m-1}}{1+ C  \beta^{m-1}}
	\leq \frac{Eh_{k} (\tau^{i}\mathbf{Z}
	,\tau^{j}\mathbf{Z})h_{k'} (\tau^{i'}\mathbf{Z}
	,\tau^{j'}\mathbf{Z})}{Eh_{k} (\tau^{i}\mathbf{Z}
	,\tau^{j}\mathbf{Z})E
	h_{k'}(\tau^{i'}\mathbf{Z},\tau^{j'}\mathbf{Z})} \leq \frac{1+ C  
	\beta^{m-1}}{1- C  \beta^{m-1}} .
\]
The inequalities in
\eqref{eq:varianceBounds} now follow, by the assumption \eqref{eq:hypExpDecay}.
(In this special case the bounds obtained are tighter that those in
\eqref{eq:varianceBounds}.)
% with $\varrho_{k}$ as defined in\eqref{eq:rhoK}.

The other cases are similar, but the  exponential ergodicity estimate
\eqref{eq:expSeparation}  must be used indirectly, since the index
sets $J$ and $K$ need not be ordered as required by
Lemma~\ref{lemma:expSeparation}. Consider, for definiteness, the case
where
\[
	i+k \leq i'\leq i'+k'\leq j\leq j +k\leq j'\leq j'+k'.
\] 
To bound the relevant likelihood ratio in this case, use the
factorization 
\begin{align*}
	\frac{d\mu_{J\cup K}}{d\mu_{J}\times
	\mu_{K}}=&\frac{d\mu_{J\cup K}}{d\mu_{J^{-}\cup K^{-}}\times
	\mu_{J^{+}\cup K^{+}}}  \\ 
	&\times \frac{d\mu_{J^{-}\cup K^{-}}\times \mu_{J^{+}\cup
	K^{+}}}{d\mu_{J^{-}}\times \mu_{K^{-}}\times \mu_{J^{+}}\times \mu_{K^{+}}} \\
	&\times \frac{d\mu_{J^{-}}\times \mu_{K^{-}}\times
	\mu_{J^{+}}\times \mu_{K^{+}}}{d\mu_{J}\times \mu_{K}}	
\end{align*}
where $J^{-}=[i,i+k]$, $J^{+}=[j,j+k]$, $K^{-}=[i',i'+k']$, and
$K^{+}=[j',j'+k']$.  For the second and third factors, use the fact
that Radon-Nikodym derivatives of product measures factor, e.g.,
\[
	\frac{d\mu_{J^{-}}\times \mu_{K^{-}}\times
	\mu_{J^{+}}\times \mu_{K^{+}}}{d\mu_{J}\times \mu_{K}}
	(x_{J},x_{K}) =\frac{d\mu_{J^{-}}\times \mu_{J^{+}}}{d\mu_{J}}
	(x_{J}) \times \frac{d\mu_{K^{-}}\times \mu_{K^{+}}}{d\mu_{K}} (x_{K})
\]
Now Lemma~\ref{lemma:expMixingGeneral} can be used
to bound each of the resulting five factors. This yields the following
bounds: 
\[
	\left(\frac{1+ C \beta^{m-1}}{1- C \beta^{m-1}}
	\right)^{5}
	\leq \frac{d\mu_{J\cup K}}{d\mu_{J}\times
	\mu_{K}} \leq 
	\left(\frac{1+ C \beta^{m-1}}{1- C \beta^{m-1}}
	\right)^{5},
\]
and so by the same reasoning as used earlier,
\[
	\left(\frac{1+ C \beta^{m-1}}{1- C \beta^{m-1}}
	\right)^{5}
	\leq \frac{Eh_{k} (\tau^{i}\mathbf{Z}
	,\tau^{j}\mathbf{Z})h_{k'} (\tau^{i'}\mathbf{Z}
	,\tau^{j'}\mathbf{Z})}{Eh_{k} (\tau^{i}\mathbf{Z}
	,\tau^{j}\mathbf{Z})E
	h_{k'}(\tau^{i'}\mathbf{Z},\tau^{j'}\mathbf{Z})} \leq 
	\left(\frac{1+ C \beta^{m-1}}{1- C \beta^{m-1}}
	\right)^{5}.
\]

The remaining cases can be handled in the same manner.
\end{proof}

\begin{corollary}\label{corollary:varEstimates}
There exist $C,C'<\infty$ such that 
for all     $n\geq 1$ and all $1\leq K \leq L\leq \infty$,
\begin{equation}\label{eq:varEstimate}
	\var  \left(\sum_{i=1}^{n} \sum_{j=i+1}^{n} 
	\sum_{k=K}^{L} h_{k}
	(\tau^{i}\mathbf{X} ,\tau^{j}\mathbf{Z}) \right)   
	\leq C n^{3} \sum_{k=K}^{\infty }\sum_{k'=K}^{\infty }\left\{ (k'+k+C')
	\beta ^{k+k'}\right\}.
\end{equation}
Consequently, for any $\varepsilon >0$ there exists $K<\infty$ such
that for all $n\geq 1$,
\begin{equation}\label{eq:epsilonVarEstimate}
	\var  \left(\sum_{i=1}^{n} \sum_{j=i+1}^{n}\sum_{k=K+1}^{\infty} h_{k}
	(\tau^{i}\mathbf{Z} ,\tau^{j}\mathbf{Z})  \right)\leq \varepsilon n^{3}.
\end{equation}
\end{corollary}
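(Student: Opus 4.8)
The plan is to bound the variance by expanding it as a sum of covariances over all pairs of index-pairs $(i,j)$ and $(i',j')$ and all pairs of levels $(k,k')$ with $K\leq k,k'\leq L$, and then to control each covariance by Lemma~\ref{lemma:varianceBounds} together with the Cauchy--Schwartz bound of Remark~\ref{remark:covEst}. Write
\[
	\var\left(\sum_{i<j}\sum_{k=K}^{L} h_{k}(\tau^{i}\mathbf{Z},\tau^{j}\mathbf{Z})\right)
	=\sum_{i<j}\sum_{i'<j'}\sum_{k=K}^{L}\sum_{k'=K}^{L}
	\cov\!\left(h_{k}(\tau^{i}\mathbf{Z},\tau^{j}\mathbf{Z}),\,h_{k'}(\tau^{i'}\mathbf{Z},\tau^{j'}\mathbf{Z})\right).
\]
The idea is to fix $k,k'$ and estimate the contribution $S_{k,k'}(n)$ of these two fixed levels to the double sum over index-pairs; the final bound will be $\sum_{k,k'\geq K} S_{k,k'}(n)$ with each $S_{k,k'}(n)$ of order $n^{3}$ times an exponentially small factor in $k+k'$.

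For fixed $k,k'$, partition the quadruples $(i,j,i',j')$ according to the separation parameter $\Delta=\Delta(i,i',j,j')$. When $\Delta\geq\max(k,k')+m_{*}$, Lemma~\ref{lemma:varianceBounds} gives
\[
	|\cov|\leq C'\beta^{k+k'-4}\,\varrho_{\Delta-\max(k,k')}
	\leq C'\beta^{k+k'-4}\,(1+C\delta^{\Delta-\max(k,k')})^{5},
\]
and since $(1+C\delta^{m})^{5}\leq 1+C''\delta^{m}$ for $m\geq m_{*}$, summing over all quadruples with a given value of $\Delta$ (there are $O(n^{3})$ such quadruples for each $\Delta$, and in fact the number with $\Delta=m$ is $O(n^{2}m)$, or one can simply use the crude count $O(n^{3})$ once and absorb the convergent tail $\sum_m \delta^m$) yields a contribution $\leq C n^{3}\beta^{k+k'}\cdot(\text{const})$. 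When $\Delta<\max(k,k')+m_{*}$, Remark~\ref{remark:covEst} gives the uniform bound $|\cov|\leq C_{*}^{1/2}\beta^{(k+k')/2}$; but now I want the sharper count: the number of quadruples $(i,j,i',j')$ in $[1,n]^{4}$ with $\Delta<\max(k,k')+m_{*}$ is $O(n^{2}(\max(k,k')+m_{*}))=O(n^{2}(k+k'+C'))$, since pinning down the near-coincidence of one of $i,j$ with one of $i',j'$ to within $\max(k,k')+m_{*}$ costs a factor $\max(k,k')+m_{*}$ rather than a factor $n$. Hence the close-quadruple contribution is $\leq C n^{2}(k+k'+C')\beta^{(k+k')/2}$.

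Collecting the two regimes, $S_{k,k'}(n)\leq C n^{3}(k+k'+C')\beta^{k+k'}+C n^{2}(k+k'+C')\beta^{(k+k')/2}\leq C n^{3}(k+k'+C')\beta^{k+k'}$ after replacing $\beta$ by its square root in the statement (the paper's $\beta$ in \eqref{eq:varEstimate} may be taken to be $\sqrt{\beta}$ from \eqref{eq:hypExpDecay}, which is harmless since both are in $(0,1)$); the $n^{2}$ term is dominated by the $n^{3}$ term for $n\geq1$. Summing over $K\leq k,k'\leq L$ and extending the sum to $k,k'\geq K$ gives exactly \eqref{eq:varEstimate}. For \eqref{eq:epsilonVarEstimate}, note that $\sum_{k,k'\geq 1}(k+k'+C')\beta^{k+k'}<\infty$ because $\sum_{k}(k)\beta^{k}<\infty$ for $\beta<1$; hence the tail $\sum_{k\geq K+1 \text{ or } k'\geq K+1}(k+k'+C')\beta^{k+k'}\to0$ as $K\to\infty$, so choosing $K$ large makes the right side of \eqref{eq:varEstimate} (with $L=\infty$, lower limit $K+1$) at most $\varepsilon n^{3}$ for \emph{all} $n$ simultaneously, the point being that the $n^{3}$ factors out.

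The main obstacle is bookkeeping the quadruple count in the near-diagonal regime $\Delta<\max(k,k')+m_{*}$ carefully enough to get the linear factor $(k+k'+C')$ rather than something worse, and making sure that when one uses the crude Cauchy--Schwartz bound there the $n^{2}$ (not $n^{3}$) count is genuinely available --- i.e., that fixing the approximate location of the coincidence really does cost only $O(k+k')$ in one of the four indices. One must also be slightly careful in the $\Delta\geq\max(k,k')+m_{*}$ regime not to double-count or to mishandle the relation between $\Delta$ and the actual gap $m$ fed into Lemma~\ref{lemma:expMixingGeneral} (the gap is $\Delta-\max(k,k')$, not $\Delta$), but this is exactly what the statement's $\varrho_{\Delta-\max(k,k')}$ records, so it is a matter of transcription. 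Everything else is a routine geometric-series estimate.
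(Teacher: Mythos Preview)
Your approach is exactly the paper's: expand the variance into covariances, group quadruples $(i,j,i',j')$ by the separation $\Delta$, apply Lemma~\ref{lemma:varianceBounds} in the far regime and the Cauchy--Schwartz bound of Remark~\ref{remark:covEst} in the near regime $\Delta<\max(k,k')+m_*$. You are in fact more careful than the paper on one point, correctly noting that Remark~\ref{remark:covEst} yields only $|\cov|\leq C\beta^{(k+k')/2}$ and absorbing this by passing to $\sqrt\beta$ in the statement.

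There is one counting error. You claim that the number of quadruples $(i,j,i',j')\in[1,n]^4$ with $\Delta<M:=\max(k,k')+m_*$ is $O(n^2 M)$; it is actually $O(n^3 M)$. The near-coincidence constraint pins only \emph{one} index to within $M$ of another, leaving three free: e.g.\ choose $i$ freely ($n$ ways), choose $i'$ within $M$ of $i$ ($O(M)$ ways), and then $j,j'$ remain free ($n^2$ ways). Your parenthetical count $O(n^2 m)$ for quadruples with $\Delta=m$ in the far regime is wrong for the same reason; the correct per-$\Delta$ count is $O(n^3)$, which is what the paper uses.

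This does not break the argument. With the correct near-regime count $O(n^3(k+k'+C'))$ and the per-term bound $C\beta^{(k+k')/2}$, the near-regime contribution becomes $Cn^3(k+k'+C')\beta^{(k+k')/2}$, which already has the form of \eqref{eq:varEstimate} after replacing $\beta$ by $\sqrt\beta$; the far-regime contribution $Cn^3\beta^{k+k'}$ is dominated by it. So drop the sentence about the $n^2$ term being dominated by the $n^3$ term (both regimes now give $n^3$) and the proof goes through.
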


\begin{proof} 
The variance is gotten by summing the covariances of all possible
pairs of terms in the sum. Group these by size, according to the value
of $\Delta (i,i',j,j')$: for any given value of $\Delta \geq 2$, the
number of quadruples $i,i',j,j'$ in the range $[n]$ with $\Delta
(i,i',j,j')=\Delta$ is no greater than $24 n^{3}$. For each such
quadruple and any pair $k,k'$ such that $K<k\leq k'$
Lemma~\ref{lemma:varianceBounds} implies that if $\Delta \geq k+m_{*}$
then
\[
	|\cov (h_{k} (\tau^{i}\mathbf{Z},\tau^{j}\mathbf{Z}),h_{k'}
	(\tau^{i'}\mathbf{Z},\tau^{j'}\mathbf{Z}))|\leq C \beta^{k+k'}
	\varrho_{\Delta -k'} 
\]
If $\Delta \leq m_{*}+k'$ then the crude Cauchy-Schwartz bounds of
Remark~\ref{remark:covEst}  imply that
\[
	|\cov (h_{k} (\tau^{i}\mathbf{Z},\tau^{j}\mathbf{Z}),h_{k'}
	(\tau^{i'}\mathbf{Z},\tau^{j'}\mathbf{Z}))|
	\leq C \beta^{k+k'}
\]
where $C<\infty$ is a constant independent of
$i,i',j,j',k,k'$. Summing these bounds we find that the variance on
the left side of \eqref{eq:varEstimate} is bounded by
\[
	Cn^{3}\sum_{k=K}^{L}\sum_{k=K}^{L}\beta^{k+k'} (m_{*}+k+k'
	+\sum_{\Delta =m_{*}}^{\infty}\varrho_{\Delta })
\]
Since $\varrho_{j}$ is exponentially decaying in $j$, the inner sum is
finite. This proves the inequality \eqref{eq:varEstimate}. The second
assertion now follows.
\end{proof}

\subsection{Proof of
Theorem~\ref{theorem:U-CLT}}\label{ssec:proofGeneralCase} 
Given Corollary~\ref{corollary:varEstimates} --- in particular, the
assertion \eqref{eq:epsilonVarEstimate} ---
Theorem~\ref{theorem:U-CLT} follows from the special case where all
but finitely many  of the functions $h_{k}$ are identically zero, by
Lemma~\ref{lemma:Btools} of the Appendix. To see this, observe that
under the hypotheses of Theorem~\ref{theorem:U-CLT}, the random
variable $W_{n}$ can be partitioned as 
\[
	W_{n}=W^{K}_{n}+R^{K}_{n}
\]
where
\[
	W^{K}_{n}=\sum_{i=1}^{n}\sum_{j=i+1}^{n}
	\sum_{k=1}^{K}h_{k}(\tau^{i}\mathbf{Z},\tau^{j}\mathbf{Z}) 
	\quad \text{and} \quad 
      R^{K}_{n}=\sum_{i=1}^{n}\sum_{j=i+1}^{n}
	\sum_{k=K+1}^{\infty}h_{k}(\tau^{i}\mathbf{Z},\tau^{j}\mathbf{Z}) .
\]
By Proposition~\ref{proposition:centeredKernel} and Corollary
\ref{corollary:non-centered}, for any finite $K$ the sequence
$W^{K}_{n}/n^{3/2}$ converges to a normal distribution with mean $0$
and finite (but possibly zero) variance $\sigma_{K}^{2}$. By
\eqref{eq:epsilonVarEstimate}, for any $\varepsilon >0$ there exists
$K<\infty$ such that $E|R^{K}_{n}|^{2}/n^{3}<\varepsilon$ for all
$n\geq 1$. Consequently, by Lemma~\ref{lemma:Btools},
$\sigma^{2}=\lim_{K \rightarrow \infty }\sigma_{K}^{2}$ exists and is
finite, and 
\[
	W_{n}/n^{3/2} \Longrightarrow \text{Normal} (0,\sigma^{2}).
\]

\section{Mean/Variance Calculations }\label{sec:variance}

In this section we verify that in the special case
$h_{k}=u_{k}+v_{k}$, where $u_{k}$ and $v_{k}$ are the functions
defined in section~\ref{sec:combinatorics} and $h_{1}\equiv 0$, the
constants $\kappa $ and $\sigma^{2}$ defined by
\eqref{eq:constantsMostGeneral} coincide with the values
\eqref{eq:constants}.

Assume throughout this section that
$\mathbf{X}=X_{1}X_{2}\dotsc$ and $\mathbf{X}'=X_{1}'X_{2}'\dotsb $
are two \emph{independent} stationary Markov chains  with
transition probabilities \eqref{eq:tps}, both defined on a probability
space $(\Omega ,P)$ with corresponding expectation operator $E$. Set
$h_{k}=u_{k}+v_{k}$. 
 For
each fixed (nonrandom) string
 $x_{1}x_{2}\dotsb$ of length $\geq k$  define $H_{k}=U_{k}+V_{k}$ where
\begin{align}\label{eq:conditionalMeans}
	U_{k}(x_{1}x_{2} \dotsb )&=Eu_{k} (x_{1}x_{2}\dotsb
	x_{k},X_{1}'X_{2}'\dotsb),\\
\notag 	V_{k}(x_{1}x_{2} \dotsb )&=Ev_{k} (x_{1}x_{2}\dotsb
	x_{k},X_{1}'X_{2}'\dotsb ), \quad \text{and}\\
\notag 	S_{k} (x_{1}x_{2}\dotsb )&=  U_{2} (x_{1}x_{2}) +
	\sum_{l=3}^{K} (U_{l}+V_{l})	(x_{1}x_{2}\dotsb
	x_{k}). 
\end{align}
The restrictions of $U_{k}$ and $V_{k}$ to the space of infinite
sequences  are the \emph{Hoeffding projections} of the functions $u_{k}$
and $v_{k}$ (see section~\ref{sec:strategy}).
Note that each of the functions $U_{k},V_{k},H_{k}$ depends only on
the first $k$ letters of the string $x_{1}x_{2}\dotsb$. By equations
\eqref{eq:constantsMostGeneral} of Theorem~\ref{theorem:U-CLT}, the
limit constants $\kappa $ and $\sigma^{2}$ are
\[
	\kappa =\sum_{k=2}^{\infty} EH_{k} (\mathbf{X})
	\quad \text{and} \quad 
	\sigma^{2}=\lim_{n \rightarrow \infty} \frac{1}{n}
	E\left(\sum_{i=1}^{n}\sum_{k=2}^{\infty} H_{k}
	(\tau^{i}\mathbf{X})-n\kappa \right)^{2}.
\]
We will prove (Corollary~\ref{corollary:variances}) that in the
particular case of interest here, where $h_{k}=u_{k}+v_{k}$, the random
variables $H_{k} (\tau^{i}\mathbf{X})$ and $H_{k'} (
\tau^{i'}\mathbf{X})$ are uncorrelated unless $i=i'$ and $k=k'$. It 
then follows that the terms of the sequence defining $\sigma^{2}$ are
all equal, and so
\[
	\sigma^{2}=\sum_{k=2}^{\infty } \var (H_{k} (\mathbf{X})).
\]

\begin{lemma} 
\label{lemma:conditionalMeanFormula}
For each string $x_{1}x_{2}\dotsb x_{k}$ of length $k\geq 2$ and each
index $i\leq k-1$, define $j_{i}=j_{i}(x_{1}x_{2}\dots x_{k})$ to be
the number of letters between $\bar{x}_{i}$ and $x_{i+1}$ in the
reference word $\mathcal{O}$ in the clockwise direction (see Figure~\ref{ji}). Then

\begin{equation}\label{eq:UequalV}
	V_{2}=0 \quad \text{and} \quad U_{k}=V_{k} \quad \text{for
	all} \;\; k\geq 3,
\end{equation}
and 
\begin{equation}\label{eq:conditionalMeanFormula}
	U_{k}(x_{1}x_{2}\dotsb )=%V_{k}(x_{1}x_{2}\dotsb )=
	 \frac{t(j_{1},j_{k-1})}{\ca(\ca-1)^{k-1}}
\end{equation}
where $t(a,b)=a(\ca-2 - b)+b(\ca-2-a).$

Therefore,
\begin{equation}\label{eq:H-Formula}
	S_{K} (x_{1}x_{2}\dotsb)
	=\frac{t(j_{1},j_{1})}{\ca(\ca-1)}+2\sum_{k=3}^{K}
	\frac{t(j_{1},j_{k-1})}{\ca(\ca-1)^{k-1}}. 
\end{equation}
\end{lemma}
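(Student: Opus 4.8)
The plan is to compute the Hoeffding projections $U_k$ and $V_k$ directly from the definitions of $u_k$ and $v_k$ in section~\ref{sec:combinatorics}, exploiting the fact that under the measure $P$ the second string $\mathbf{X}'=X_1'X_2'\dotsb$ is a Markov chain whose transitions are uniform over admissible letters. First I would fix the nonrandom string $x_1x_2\dotsb x_k$ and unwind $U_k(x_1x_2\dotsb)=Eu_k(x_1\dotsb x_k,\,X_1'\dotsb X_k')$. For $u_k$ to be nonzero we need (by condition (b) in the definition) $X_1'\neq x_1$, $X_k'\neq x_k$, and $X_j'=x_j$ for $1<j<k$; the probability that the middle coordinates match is $\theta^{k-2}=(\ca-1)^{-(k-2)}$ since each forced step has probability $\theta$, and then we must still sum over the choices of $X_1'$ (with $X_1'\neq x_1$ and $X_1'\neq x_2^{-1}$, and $X_1'$ such that the appropriate cyclic-order condition holds) and of $X_k'$ (with $X_k'\neq x_k$, $X_k'\neq x_{k-1}^{-1}$, and the matching cyclic-order condition). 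Each of $X_1'$ and $X_k'$ carries a probability weight $\theta\cdot\frac{1}{\ca}$ from the initial distribution and the remaining transition, so altogether $U_k = \frac{1}{\ca(\ca-1)^{k-1}}\times(\text{number of admissible }(X_1',X_k')\text{ pairs})$.

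The combinatorial heart is therefore to count the admissible pairs and identify that count with $t(j_1,j_{k-1})=j_1(\ca-2-j_{k-1})+j_{k-1}(\ca-2-j_1)$. Here I would interpret $j_1$ as the number of letters strictly between $\bar{x}_1$ and $x_2$ going clockwise in $\mathcal{O}$ (see Figure~\ref{ji}), and $j_{k-1}$ as the number strictly between $\bar{x}_{k-1}$ and $x_k$ clockwise. The cyclic-order conditions $o(\bar{c}_1\bar{d}_1c_2)=o(c_kd_k\bar{c}_{k-1})$ (for $k\ge 3$) say precisely that $\bar{X}_1'$ lies on one side of the arc from $\bar{x}_1$ to $x_2$ and $X_k'$ lies on the correspondingly oriented side of the arc from $\bar{x}_{k-1}$ to $x_k$, or both lie on the opposite sides; counting the letters available in each of the two arcs (of sizes $j_1$ and $\ca-2-j_1$ after excluding the two endpoints $\bar{x}_1,x_2$, and similarly $j_{k-1}$ and $\ca-2-j_{k-1}$) and pairing "same orientation" choices gives exactly $j_1(\ca-2-j_{k-1})+(\ca-2-j_1)j_{k-1}=t(j_1,j_{k-1})$. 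The identity $U_k=V_k$ for $k\ge 3$ follows because $v_k(x_1\dotsb x_k,\,\bar{X}_k'\dotsb\bar{X}_1')$ is $u_k$ applied to the reversed-and-barred second argument, and reversing-and-barring a Markov chain with the symmetric transition matrix $\zz{P}$ produces a chain with the same law (the involution $A$ is symmetric and $\zz{P}=\theta(B-A)$ commutes with it appropriately), so the expectation is unchanged; the case $k=2$ gives $V_2=0$ because $v_2\equiv 0$ by definition. For $k=2$ one checks separately that $U_2(x_1x_2)=t(j_1,j_1)/(\ca(\ca-1))$, the diagonal case of the same formula (here $j_1=j_{k-1}$ since $k-1=1$).

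Finally, equation~\eqref{eq:H-Formula} is just bookkeeping: by definition $S_K=U_2(x_1x_2)+\sum_{l=3}^K(U_l+V_l)(x_1\dotsb x_l)$, and substituting $V_l=U_l$ gives $S_K=U_2+2\sum_{l=3}^K U_l$; plugging in \eqref{eq:conditionalMeanFormula} with the diagonal value for $U_2$ yields $\frac{t(j_1,j_1)}{\ca(\ca-1)}+2\sum_{k=3}^K\frac{t(j_1,j_{k-1})}{\ca(\ca-1)^{k-1}}$, as claimed. The main obstacle I anticipate is the careful verification of the cyclic-order counting — making sure the two orientation-matching possibilities for the pair $(X_1',X_k')$ really produce the cross term $j_1(\ca-2-j_{k-1})+j_{k-1}(\ca-2-j_1)$ rather than, say, $j_1 j_{k-1}+(\ca-2-j_1)(\ca-2-j_{k-1})$ — and this is where a careful look at the sign conventions in the definition of $o(\cdot)$ and at Figure~\ref{ji} is essential; everything else is routine Markov-chain computation.
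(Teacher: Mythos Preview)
Your proposal is correct and follows essentially the same route as the paper: the reversibility/inversion-invariance argument for $U_k=V_k$, the Markov-chain probability $\frac{1}{\ca(\ca-1)^{k-1}}$ for a specific admissible path $(X_1',x_2,\dotsc,x_{k-1},X_k')$, and the cyclic-order count yielding the cross term $j_1(\ca-2-j_{k-1})+(\ca-2-j_1)j_{k-1}$ are exactly what the paper does. Your anticipated obstacle is real but resolves the right way---the ``same orientation'' pairing does give the cross terms, since $o(\bar{x}_1\bar{X}_1'x_2)=+1$ places $\bar{X}_1'$ in the clockwise arc of size $j_1$, while $o(x_kX_k'\bar{x}_{k-1})=+1$ places $X_k'$ in the arc of size $\ca-2-j_{k-1}$ (the arc from $x_k$ to $\bar{x}_{k-1}$ clockwise).
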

\begin{figure}[http]
%\begin{center}
\begin{pspicture}(12,2)%\psgrid(12,2)
\pscircle[linestyle=dashed](6,1){1}
 \rput(6,1){$\mathcal{O}$}
\rput(6.8,2){$\overline{x}_i$}\rput(7.2,0.2){${x}_{i+1}$}

\psarc[linewidth=2pt,linecolor=lightgray](6,1){1}{-50}{-30}
\psarc[linewidth=2pt,linecolor=lightgray](6,1){1}{45}{65}
\psarc[linewidth=2pt](6,1){1}{-30}{45}
\rput(9,1){$j_i(x_1x_2\dots x_ix_{i+1}\dots)$}
\end{pspicture}
\caption{The interval of length $j_i$ in $\mathcal{O}$}\label{ji}
%\end{center}
\end{figure}
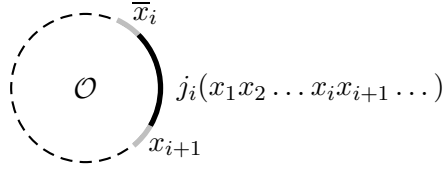
\begin{proof}
The Markov chain with transition probabilities \eqref{eq:tps} is
reversible (the transition probability matrix \eqref{eq:tps} is
symmetric), and the transition probabilities are unchanged by
inversion $a\mapsto \bar{a}$ and $a'\mapsto \bar{a}'$. Hence, the
random strings $X_{1}'X_{2}'\dotsb X_{k}'$ and
$\bar{X}_{k}'\bar{X}_{k-1}'\dotsb \bar{{X}_{1}'}$ have the same
distribution. It follows that for each $k\geq 2$,
\[
	U_{k}(x_{1}x_{2}\dotsb )=V_{k}(x_{1}x_{2}\dotsb ).
\]

Consider the case $k=2$. In order that
$u_{2}(x_{1}x_{2},X_{1}'X_{2}')\not =0$ it is necessary and sufficient
that the letters $\bar{x}_{1}\bar{X}_{1}'x_{2}X_{2}'$ occur in cyclic
order (either clockwise or counterclockwise) in the reference word
$\mathcal{O}$. For \emph{clockwise} cyclic ordering, the letter
$\bar{X}_{1}'$ must be one of the $j_{1}$ letters between
$\bar{x}_{1}$ and $x_{2}$, and $X_{2}'$ must be one of the $\ca-2-j_{1}$ letters between $x_{2}$ and $\bar{x}_{1}$. Similarly, for
\emph{counterclockwise} cyclic ordering, $\bar{X}_{1}'$ must be one of
the $\ca-2-j_{1}$ letters between $x_{2}$ and $\bar{x}_{1}$, and
$X_{2}$ one of the $j_{1}$ letters between $\bar{x}_{1}$ and
$x_{2}$. But $X_{1}'$, and hence also its inverse $\bar{X}_{1}'$, is
uniformly distributed on the $\ca$  letters, and given the value of
$\bar{X}_{1}'$ the random variable $X_{2}'$ is uniformly distributed on the
remaining $(\ca -) 1$ letters. Therefore,
\[
	U_{2} (x_{1}x_{2})=\frac{t (j_{1},j_{1})}{\ca (\ca- 1)}.
\]

The case $k\geq 3$ is similar. In order that $u_{k} (x_{1}x_{2}\dotsb
,X_{1}'X_{2}'\dotsb)$ be nonzero it is necessary and sufficient that
the strings $x_{1}x_{2}\dotsb x_{k}$ and $X_{1}'X_{2}'\dotsb X_{k}'$
differ precisely in the first and $k$th entries, and that 
the letters $\bar{x}_{1}\bar{X}_{1}'x_{2}$ occur in the same cyclic
order as the letters $x_{k}X_{k}'\bar{x}_{k-1}$. This order will be
\emph{clockwise} if and only if $\bar{X}_{1}'$ is one of the $j_{1}$
letters between $\bar{x}_{1}$ and $x_{2}$ \emph{and} $X_{k}'$ is one
of the $\ca -2-j_{k-1}$ letters between $x_{k}$ and
$\bar{x}_{k-1}$. The order will be \emph{counterclockwise} if and only
if $\bar{X}_{1}'$ is one of the $\ca -2-j_{1}$ letters between
$x_{2}$ and $\bar{x}_{2}$ \emph{and}   $X_{k}'$ is one
of the  $j_{k-1}$  letters between $\bar{x}_{k-1}$ and $x_{k}$. 
Observe that all of these possible choices will lead to reduced
words $X_{1}'x_{2}x_{3}\dotsb x_{k-1}X_{k}'$. By \eqref{eq:tps}, the
probability of one of these events occurring is
\[
	U_{k} (x_{1}x_{2}\dotsb x_{k})=\frac{t (j_{1},j_{k-1})}{\ca (\ca - 1)^{k-1}}.
\]
\end{proof}

For $i=1,2,\dotsc$, define $J_{i}=j_{i} (X_{1}X_{2}\dotsb )$ to be the
random variable obtained by evaluating the function $j_{i}$ at a
random string generated by the Markov chain, that is, $J_{i}$ is the
number of letters between $\bar{X}_{i}$ and $X_{i+1}$ in the reference
word $\mathcal{O}$ in the clockwise direction. Because $X_{i+1}$ is
obtained by randomly choosing one of the letters of $\gen$
other than $\bar{X}_{i}$, the random variable $J_{i}$ is independent
of $X_{i}$. Since these random choices are all made independently, the
following is true:

\begin{lemma}\label{lemma:J-structure}
The random variables $X_{1},J_{1},J_{2},\dotsc$ are mutually
independent,  and each $J_{i}$ has the uniform distribution on the set 
$\{0,1,2,\dotsc ,\ca-2 \}$.  Consequently,
\begin{align}\label{eq:EJ}
	EJ_{i} &= (\ca -2) /2 ,\\
\notag  	EJ_{i}^{2}&= (\ca -2) (2\ca -3)/6, \\
\notag 		EJ_{i}^{3}&= (\ca -2)^{2}(\ca - 1) /4, \\
\notag 		EJ_{i}^{4}&= (\ca -2) (2\ca -3) (3\ca^{2}-9\ca +5 )/30
\quad \text{and}\\
\notag 
 	EJ_{i}J_{i'}&= EJ_{i}EJ_{i'}= (\ca -2)^{2}/4 \quad \text{for} \;\; i\not =i'.
\end{align}
\end{lemma}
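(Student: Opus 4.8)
The plan is to exploit the sequential generation of the Markov chain $\mathbf{X}$ recalled in the paragraph preceding the lemma: $X_{1}$ is uniform on $\gen$ (Lemma~\ref{lemma:MC}), and for each $i\geq 1$, conditionally on $X_{1},\dots,X_{i}$ the next letter $X_{i+1}$ is uniform on the set $\gen\setminus\{\bar{X}_{i}\}$ of $\ca-1$ admissible successors. Since $J_{i}$ is by definition a deterministic function of the pair $(X_{i},X_{i+1})$, the whole statement reduces to identifying the conditional law of $J_{i}$ given the past.

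The key step is an elementary bijection. Fix $a\in\gen$; because $\mathcal{O}$ is (by property (1)) a cyclic arrangement of the $\ca$ distinct letters of $\gen$, deleting $\bar{a}$ and reading the remaining letters clockwise, starting immediately after the former position of $\bar{a}$, linearly orders $\gen\setminus\{\bar{a}\}$. Let $\phi_{a}\colon\gen\setminus\{\bar{a}\}\to\{0,1,\dots,\ca-2\}$ send a letter to its $0$-based index in this ordering; by the definition of $j_{i}$ (see Figure~\ref{ji}) one has $J_{i}=\phi_{X_{i}}(X_{i+1})$. Since each $\phi_{a}$ is a bijection, conditionally on $X_{1},\dots,X_{i}$ the variable $J_{i}$ is the image under a bijection of a uniform draw from $\gen\setminus\{\bar{X}_{i}\}$, hence uniform on $\{0,1,\dots,\ca-2\}$ --- and, crucially, this conditional law does not depend on $X_{i}$ nor on $X_{1},\dots,X_{i-1}$. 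As the maps $\phi$ are invertible, $(X_{1},\dots,X_{i})$ and $(X_{1},J_{1},\dots,J_{i-1})$ generate the same information, so $J_{i}$ is independent of $(X_{1},J_{1},\dots,J_{i-1})$; an induction on $i$ then yields the asserted mutual independence and marginals. (Equivalently: $(x_{1},\dots,x_{n})\mapsto(x_{1},j_{1},\dots,j_{n-1})$ is a bijection from length-$n$ reduced strings onto $\gen\times\{0,\dots,\ca-2\}^{n-1}$, under which the uniform measure $\nu_{n}$ pushes forward to the uniform measure on this product set, which is the product of the uniform measures on the factors.)

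Finally I would compute the moments of a uniform variable $J$ on $\{0,1,\dots,N\}$ with $N=\ca-2$ from the standard power sums $\sum_{k=0}^{N}k=N(N+1)/2$, $\sum_{k=0}^{N}k^{2}=N(N+1)(2N+1)/6$, $\sum_{k=0}^{N}k^{3}=N^{2}(N+1)^{2}/4$, and $\sum_{k=0}^{N}k^{4}=N(N+1)(2N+1)(3N^{2}+3N-1)/30$: dividing by $N+1=\ca-1$ and substituting $N=\ca-2$ (using $3N^{2}+3N-1=3\ca^{2}-9\ca+5$) gives the four displayed formulas, and $EJ_{i}J_{i'}=EJ_{i}EJ_{i'}=(\ca-2)^{2}/4$ for $i\neq i'$ is immediate from the independence just established. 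I do not foresee a genuine obstacle here; the one point requiring care is the $0$-based counting convention in the definition of $j_{i}$, which is exactly what makes $J_{i}$ range over a set of size $\ca-1=|\gen\setminus\{\bar{X}_{i}\}|$ and makes each $\phi_{a}$ a bijection.
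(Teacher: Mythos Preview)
Your argument is correct and is precisely the approach the paper takes: the paper gives no formal proof beyond the sentence preceding the lemma (noting that $X_{i+1}$ is a uniform independent choice from $\gen\setminus\{\bar X_i\}$) and a \qed, and your write-up is a faithful elaboration of that one-line observation, together with the routine power-sum computations.
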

\qed

By Lemma~\ref{lemma:conditionalMeanFormula}, the conditional
expectations $U_{k}, V_{k}$ are quadratic functions  of the cycle gaps
$J_{1},J_{2},\dotsc$. Consequently, the unconditional expectations
\[
	Eu_{k} (\mathbf{X},\mathbf{X}') =EU_{k} (\mathbf{X})
	%=2E J_{1} (\chi -1-J_{k-1})/ ((\chi +1)\chi^{k-1})
\]
can be deduced from the elementary formulas of
Lemma~\ref{lemma:J-structure} by linearity of expectation.  Consider
first the case $k\geq 3$:
\begin{align*}
	\ca(\ca -1)^{k-1}EU_{k} (\mathbf{X})&=Et (J_{1},J_{k-1})\\
	 &=2E J_{1} (\ca - 2-J_{k-1})\\
	       &=2 (\ca-2)EJ_{1} - 2EJ_{1}J_{k-1}\\
	       &= (\ca-2)^{2} - (\ca-2)^{2}/2\\
	       &= (\ca -2)^{2}/2.
\end{align*}
For $k=2$:
\begin{align*}
	\ca(\ca -1 ) EU_{2} (\mathbf{X})&=Et (J_{1},J_{1})\\
	       &= 2E J_{1} (\ca-2-J_{1})\\
	      &=2 (\ca-2) EJ_{1} -2EJ_{1}^{2} \\
	      &= (\ca-2)^{2} - (\ca-2) (2\ca-3)/3\\
	      &= (\ca -2) (\ca -3)/3.
\end{align*}

\begin{corollary}\label{corollary:unconditionalMeans}
If $\mathbf{X}=X_{1}X_{2}\dotsc$ and $\mathbf{X}'=X_{1}'X_{2}'\dotsb $
are  \emph{independent} realizations of the 
stationary Markov chain with transition probabilities
\eqref{eq:tps}, then
\begin{align}\label{eq:unconditionalMean2}
	Eu_{2} (\mathbf{X},\mathbf{X}')&=EU_{2} (\mathbf{X})
	 =\frac{(\ca -2) (\ca -3)}{3\ca(\ca -1 ) } ,\\ 
\label{eq:unconditionalMeanK}
	 Eu_{k}(\mathbf{X},\mathbf{X}')&=EU_{k} (\mathbf{X}) =
	 \frac{(\ca -2)^{2}}{2 \ca(\ca -1)^{k-1}} 
	\quad \text{for}\;\; k\geq 3,
	 \quad \text{and}\\ 
\label{eq:unconditionalMeanH}
	ES_{\infty} (\mathbf{X})&=Eu_{2} (\mathbf{X},\mathbf{X}')
	+\sum_{k=3}^{\infty}E(u_{k}+v _{k}) (\mathbf{X},\mathbf{X}') 
	 =\frac{(\ca-2)}{3(\ca-1) }.
\end{align}
\end{corollary}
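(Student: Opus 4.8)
The plan is to obtain the first two identities by a direct linearity-of-expectation calculation built on Lemmas~\ref{lemma:conditionalMeanFormula} and~\ref{lemma:J-structure}, and then to deduce the third by summing a geometric series.

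The equalities $Eu_2(\mathbf{X},\mathbf{X}')=EU_2(\mathbf{X})$ and $Eu_k(\mathbf{X},\mathbf{X}')=EU_k(\mathbf{X})$ are immediate from the definition \eqref{eq:conditionalMeans} together with the independence of $\mathbf{X}$ and $\mathbf{X}'$ and Fubini's theorem, since $U_k$ is by construction the partial expectation of $u_k$ over the second coordinate. So it remains to evaluate $EU_2(\mathbf{X})$ and $EU_k(\mathbf{X})$ for $k\geq 3$. By Lemma~\ref{lemma:conditionalMeanFormula}, $U_k(x_1x_2\dotsb)=t(j_1,j_{k-1})/(\ca(\ca-1)^{k-1})$ with $t(a,b)=(\ca-2)(a+b)-2ab$, hence $\ca(\ca-1)^{k-1}EU_k(\mathbf{X})=Et(J_1,J_{k-1})$. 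For $k\geq 3$ the indices $1$ and $k-1$ are distinct, so by Lemma~\ref{lemma:J-structure} the variables $J_1,J_{k-1}$ are independent with common mean $(\ca-2)/2$; expanding $t$ by linearity gives $Et(J_1,J_{k-1})=(\ca-2)^2-(\ca-2)^2/2=(\ca-2)^2/2$, which is \eqref{eq:unconditionalMeanK}. For $k=2$ the same expansion instead uses the second moment $EJ_1^2=(\ca-2)(2\ca-3)/6$, giving $Et(J_1,J_1)=(\ca-2)^2-(\ca-2)(2\ca-3)/3=(\ca-2)(\ca-3)/3$ and hence \eqref{eq:unconditionalMean2}.

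For \eqref{eq:unconditionalMeanH}, recall from \eqref{eq:UequalV} that $V_2=0$ and $V_k=U_k$ for $k\geq 3$, so $E(u_k+v_k)(\mathbf{X},\mathbf{X}')=2EU_k(\mathbf{X})$ for $k\geq 3$; the infinite sum is legitimate because the summands are nonnegative and controlled by hypotheses (H0)--(H3), as noted after \eqref{eq:Sk}. Thus $ES_\infty(\mathbf{X})=EU_2(\mathbf{X})+2\sum_{k\geq 3}EU_k(\mathbf{X})$, and substituting the formulas just obtained turns the tail into a geometric series, $2\sum_{k\geq 3}(\ca-2)^2/(2\ca(\ca-1)^{k-1})=\frac{(\ca-2)^2}{\ca}\sum_{m\geq 2}(\ca-1)^{-m}=\frac{\ca-2}{\ca(\ca-1)}$. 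Adding $EU_2(\mathbf{X})=(\ca-2)(\ca-3)/(3\ca(\ca-1))$, factoring out $(\ca-2)/(\ca(\ca-1))$, and simplifying $(\ca-3)/3+1=\ca/3$ yields $ES_\infty(\mathbf{X})=(\ca-2)/(3(\ca-1))$.

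There is no genuine obstacle here: the computation is entirely linearity of expectation against the moment table of Lemma~\ref{lemma:J-structure}, and the only points requiring a moment of care are the index shift in the geometric series (where $k-1$ ranges over $m\geq 2$) and the use of the identity $V_k=U_k$ to collapse $u_k+v_k$ to $2U_k$.
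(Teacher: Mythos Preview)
Your proof is correct and follows essentially the same route as the paper: both compute $Et(J_{1},J_{k-1})$ via linearity of expectation against the moment formulas of Lemma~\ref{lemma:J-structure}, handling $k=2$ and $k\geq 3$ separately, and then sum the resulting geometric series using $V_{k}=U_{k}$. Your write-up is in fact slightly more explicit than the paper's, since you spell out the Fubini step for $Eu_{k}(\mathbf{X},\mathbf{X}')=EU_{k}(\mathbf{X})$ and carry out the geometric-series summation for $ES_{\infty}$ in full, whereas the paper simply records the final value.
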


The variances and covariances of the random variables $U_{k}
(\mathbf{X})$ can be calculated in similar fashion, using the
independence of the cycle gaps $J_{k}$ and the moment formulas in
Lemma~\ref{lemma:J-structure}.  It is easier to work
with the scaled variables $t (J_{1},J_{k})= \ca(\ca -1)^{k}U_{k+1}$
rather than with the variables $U_{k}$, and for convenience we will
write $J_{i}^{R}=\ca-2-J_{i}$.  Note that by { definition and
Lemma~\ref{lemma:J-structure}} the random variables $J_{i}$ and
$J_{i}^{R}$ both have the same distribution (uniform on the set
$\{0,1,\dotsc ,\ca-2 \}$), and therefore also the same moments.

\medskip \noindent 
\textbf{Case 0:} If $i,j,k,m$ are distinct, or if $i=j$ and $i,k,m$
are distinct, then 
\[
	Et (J_{i},J_{j})t (J_{k},J_{m})=Et (J_{i},J_{j})Et (J_{k},J_{m}),
\]
since the random variables $J_{i},J_{j},J_{k},J_{m}$ (or in the second
case $J_{i},J_{k},J_{m}$) are independent. It follows that for any
indices $i,j,k,m$ such that and $i+k\not =j+m$, the random variables
$U_{k} (\tau^{i}\mathbf{X})$ and $U_{m} (\tau^{j}\mathbf{X})$ are
uncorrelated. (Here, as usual, $\tau$ is the forward shift operator.)

\medskip \noindent 
\textbf{Case 1:} If $i,k,m\geq 1$ are distinct then
\begin{align*}
	Et (J_{i},J_{k})t (J_{i},J_{m})&= 
	  E(J_iJ_k^R+J_kJ_i^R)(J_iJ_m^R+J_mJ_i^R)\\
	   &=EJ_{i} J_{k}^{R}J_{i}J_{m}^{R}+
	    EJ_{i}J_{k}^{R}J_{i}^{R}J_{m}+
	     EJ_{i}^{R}J_{k}J_{i}J_{m}^{R}+
	      EJ_{i}^{R}J_{k}J_{i}^{R}J_{m}\\ 
	   &= ( {(\ca-2)^{2}}/{4})
	   (EJ_{i}^{2}+EJ_{i}J_{i}^{R}+EJ_{i}^{R}J_{i}+EJ_{i}^{R}J_{i}^{R})\\
	   &=( {(\ca -2)^{2}}/{4}) E (J_{i}+J_{i}^{R})^{2}\\
	   &={(\ca -2)^{4}}/{4}\\
	   &=Et (J_{i},J_{k})Et (J_{i},J_{m}).
\end{align*}
Thus, the random variables $t (J_{i},J_{k})$ and $t (J_{i},J_{m})$ are
uncorrelated. Consequently, for all choices of $i,j,k\geq 1$ such that
$j\not =k$, the random variables $U_{j} (\tau^{i}\mathbf{X})$ and
$U_{m}(\tau^{i}\mathbf{X})$ are uncorrelated. 

\medskip \noindent 
\textbf{Case 2:} If $i\not =k$ then {
\begin{align*}
	Et (J_{i},J_{i})t (J_{i},J_{k})&=EJ_{i} J_{i}^{R}J_{i}J_{k}^{R}+
	    EJ_{i}J_{i}^{R}J_{i}^{R}J_{k}+
	     EJ_{i}^{R}J_{i}J_{i}J_{k}^{R}+
	      EJ_{i}^{R}J_{i}J_{i}^{R}J_{k}\\ 
	      &= ((\ca-2)/2) (2EJ_{i}J_{i}J_{i}^{R}+2EJ_{i}J_{i}^{R}J_{i}^{R})\\
	       &=2(\ca-2) (EJ_{i}J_i(\ca -2-J_{i}))\\
	      &=2(\ca-2) ((\ca-2)EJ_{i}^2-EJ_{i}^3)\\
	      &=(\ca-2)^{3}(\ca-3)/6\\
	      &=Et (J_{i},J_{k})Et (J_{i},J_{i})
\end{align*}}
Once again, the two random variables are uncorrelated. It follows that
for all $i\geq 1$ and $m\geq 3$ the random variables $U_{2}(\tau^{i}\mathbf{X})$
and $U_{m} (\tau^{i}\mathbf{X})$ are uncorrelated.

\medskip \noindent 
\textbf{Case 3:} If $k\geq 2$ then {
\begin{align*}
	Et (J_{1},J_{k})^{2}&= EJ_{1} J_{1}J_{k}^{R}J_{k}^{R}+
	    EJ_{1}^{R}J_{1}^{R}J_{k}J_{k}+
	   2   EJ_{1}^{R}J_{1}J_{k}^{R}J_{k}\\
		&=2 ( EJ_{1}^{2})^{2}+2 ( EJ_{1}J_{1}^{R})^{2} \\
	 &= (\ca-2)^2(2\ca-3)^2/18+(\ca-2)^2(\ca-3)^2/18
\end{align*}

and so
\begin{align*}
	\text{var} (t (J_{1},J_{k}))&=	
		    Et (J_{1},J_{k})^{2} - ( Et(J_{1},J_{k}))^{2} \\
				&= (\ca-2)^2(2\ca-3)^2/18+(\ca-2)^2(\ca-3)^2/18- (g-2)^4/4\\
			       	&= \ca^2(\ca-2)^2/36.
\end{align*}

\medskip \noindent 
\textbf{Case 4:} When $k=1$:
\begin{align*}
	Et (J_{1},J_{1})^{2}&= 4EJ_{1}J_{1}J_{1}^{R}J_{1}^{R}\\
	   &=4 ((\ca-2)^{2}EJ_{1}^{2} -2 (\ca-2)EJ_{1}^{3}+EJ_{1}^{4})\\
	   &= 2(\ca -2)(\ca -3)(\ca^2-4\ca +5)/15,
\end{align*}
so
\begin{align*}
	\text{var} (t (J_{1},J_{1}))
	 &=Et (J_{1},J_{1})^{2} - ( Et(J_{1},J_{1}))^{2} \\ 
	 &= 2(\ca -2)(\ca -3)(\ca^2-4\ca +5)/15-(\ca -2)^2(\ca - 3)^2/9\\
	 &=\ca(\ca-2)(\ca-3)(\ca+1)/45,	
\end{align*}
}
This proves:

\begin{corollary}\label{corollary:variances}
The random variables $U_{k} (\tau^{i}\mathbf{X})$, where $i\geq 0$ and
$k\geq 2$, are uncorrelated, and have variances
\begin{align}\label{eq:var-k}
	\var  (U_{k} (\tau^{i}\mathbf{X}))&=\frac{(\ca-2)^{2}}{36(\ca-1)^{2k-2}} 
 	\quad  \text{for} \quad k\geq 3,\\
\notag	\var  (U_{2} (\tau^{i}\mathbf{X}))&= 
	\frac{(\ca-2)(\ca-3)(\ca+1)}{45 \ca(\ca -1)^{2}}.
\end{align}
Consequently,
\begin{align}\label{eq:var-H}
	\var  (S_{\infty} (\tau^{i}\mathbf{X}))
		   &=\var  (U_{2} (\mathbf{X}))
		   +\sum_{k=3}^{\infty} \var  (2U_{k}
		   (\mathbf{X})) \\
\notag 	   &= \var  (U_{2} (\mathbf{X}))
		   + \lim_{K \rightarrow \infty}\sum_{k=3}^{K} \var  (2U_{k} (\mathbf{X})) \\
\notag 	 &=\frac{(\ca-2)(\ca-3)(\ca+1)}{45 \ca(\ca -1)^{2}}+\frac{ \ca  -2}{9 \ca  ( \ca  -1)^2}\\
\notag       &=\frac{( \ca  -2)( \ca  ^2-2 \ca  +2)}{45 \ca  ( \ca  -1)^2}\\
\notag       &=\frac{2 \chi  (2 \chi ^2-2 \chi +1)}{45(2 \chi-1 )^2 (\chi -1)}
		   \end{align}
\end{corollary}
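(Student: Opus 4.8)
The plan is to read the corollary off the five computations (Cases 0--4) carried out just above, combined with the representation furnished by Lemma~\ref{lemma:conditionalMeanFormula}: writing $J_\ell$ for the cycle gap at position $\ell$, one has $U_k(\tau^i\mathbf{X})=t(J_{i+1},J_{i+k-1})/(\ca(\ca-1)^{k-1})$ for $k\geq 3$ and $U_2(\tau^i\mathbf{X})=t(J_{i+1},J_{i+1})/(\ca(\ca-1))$, where by Lemma~\ref{lemma:J-structure} the $J_\ell$ are i.i.d.\ uniform on $\{0,1,\dotsc,\ca-2\}$ and $t$ is the symmetric quadratic $t(a,b)=a(\ca-2-b)+b(\ca-2-a)$.

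For the uncorrelatedness, fix $(i,k)\neq(i',k')$ with $k,k'\geq 2$ and let $I=\{i+1,i+k-1\}$, $I'=\{i'+1,i'+k'-1\}$ be the index sets of the gaps on which the two random variables depend (each a two-element set unless $k$ or $k'$ equals $2$, in which case it is a singleton). I would organize the proof by how $I$ and $I'$ meet: if $I\cap I'=\varnothing$, independence of the gaps gives a product expectation (Case~0); if $|I\cap I'|=1$ and neither set is a singleton, Case~1 applies after possibly swapping the arguments of $t$; if exactly one of $I,I'$ is a singleton meeting the other, Case~2 applies; and the only way both sets can be equal singletons is $i=i'$ and $k=k'=2$, which is excluded. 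The single point requiring care is that when $i+k=i'+k'$ but $i\neq i'$ the \emph{terminal} gap indices coincide; one checks that (since then $k,k'$ cannot both equal $2$) this still reduces to Case~1 or Case~2 by the symmetry of $t$. Hence every admissible pair has vanishing covariance.

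For the variances: when $k\geq 3$ the indices $i+1,i+k-1$ are distinct, so Case~3 (together with stationarity of the gap sequence) gives $\var(t(J_{i+1},J_{i+k-1}))=\ca^2(\ca-2)^2/36$, whence $\var(U_k(\tau^i\mathbf{X}))=(\ca-2)^2/(36(\ca-1)^{2k-2})$; when $k=2$, Case~4 gives $\var(t(J_{i+1},J_{i+1}))=\ca(\ca-2)(\ca-3)(\ca+1)/45$, whence the stated value of $\var(U_2)$. Finally, since $U_k=V_k$ for $k\geq 3$ by Lemma~\ref{lemma:conditionalMeanFormula}, $S_\infty(\tau^i\mathbf{X})=U_2(\tau^i\mathbf{X})+2\sum_{k\geq 3}U_k(\tau^i\mathbf{X})$, the series converging in $L^2$ because $\sum_k\var(U_k)<\infty$ by the geometric estimate just obtained; uncorrelatedness (the $i=i'$ case of the previous paragraph) then gives $\var(S_\infty)=\var(U_2)+4\sum_{k\geq 3}\var(U_k)$. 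Summing the geometric series $\sum_{k\geq 3}(\ca-1)^{-(2k-2)}=(\ca-1)^{-4}/(1-(\ca-1)^{-2})$ collapses the tail to $(\ca-2)/(9\ca(\ca-1)^2)$; adding $\var(U_2)$, extracting the common factor $(\ca-2)/(45\ca(\ca-1)^2)$, and substituting $\ca=2-2\chi$ yields the displayed chain of equalities ending in $2\chi(2\chi^2-2\chi+1)/\bigl(45(2\chi-1)^2(\chi-1)\bigr)$.

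The only genuine obstacle is the combinatorial case-check in the second paragraph --- verifying that every distinct pair $(i,k),(i',k')$ really does fall under one of Cases~0--2; once that is in place, the remainder is a single geometric sum and an entirely mechanical algebraic simplification. Organizing the check by the value of $|I\cap I'|$ keeps the number of subcases to a minimum.
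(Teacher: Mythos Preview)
Your proposal is correct and follows essentially the same approach as the paper: the corollary is read off the Cases~0--4 computed immediately above, together with the representation $U_k(\tau^i\mathbf{X})=t(J_{i+1},J_{i+k-1})/(\ca(\ca-1)^{k-1})$ from Lemma~\ref{lemma:conditionalMeanFormula}. Your organization of the uncorrelatedness check by $|I\cap I'|$ is somewhat more systematic than the paper's own case conclusions (which are stated a bit loosely), and your explicit remark about $L^2$ convergence of the series defining $S_\infty$ fills a small gap the paper leaves implicit, but the substance is identical.
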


\appendix
\section{Background: Probability, Markov chains, weak convergence }\label{ssec:probability}
For the convenience of the reader we shall review some of the 
terminology of the subject here (All of this is standard, and can
be found in most introductory textbooks, for instance,
\cite{billingsley:pm} and \cite{billingsley:wc}.)

A \emph{probability
space} is a measure space $(\Omega ,\mathcal{B},P)$ with total mass
$1$. Integrals with respect to $P$ are called \emph{expectations} and
denoted by the letter $E$, or by $E_{P}$ if the dependence on $P$ must be
emphasized. A \emph{random variable} is a measurable, real-valued
function on $\Omega$; similarly, a \emph{random vector} or a
\emph{random sequence} is a measurable function taking values in a
vector space or sequence space. The \emph{distribution} of a random
variable, vector, or sequence $X$ is the induced probability measure
$P\circ X^{-1}$ on the range of $X$. Most questions of interest in the
subject concern the distributions of various random objects, so the
particular probability space on which these objects are defined is
usually  not important; however, it is sometimes necessary to move to
a ``larger'' probability space (e.g., a product space) to ensure that
auxiliary random variables can be defined.  This is the case, for
instance, in sec.~\ref{sec:variance}, where independent copies
of a Markov chain are needed.

\begin{definition}\label{definition:mc}
A sequence $\dotsb ,X_{-1},X_{0},X_{1},\dotsc$ of $\gen-$valued
random variables defined on some probability space
$(\mathcal{X},\mathcal{B},P)$ is said to be a \emph{stationary Markov
chain} with \emph{stationary distribution} $\pi$ and transition
probabilities { $p (a,a')$} if
for every finite sequence $w=w_{0}w_{1}\dotsb w_{k}$ of elements of
$\gen$ and every integer $m$,
\begin{equation}\label{eq:mcDef}
	P\{X_{m+j}=w_{j} \; \text{for each} \;\; 0\leq j\leq k\}
	=\pi (w_{0}) \prod_{j=0}^{k-1} p (w_{j},w_{j+1}).
\end{equation}
\end{definition}

If $p (a,a')$ is a stochastic matrix on  set $\gen$ and $\pi$
satisfies the stationarity condition $\pi (a)=\sum_{a'}\pi (a')p
(a',a)$ then there is a probability measure on the sequence space
$\gen^{\zz{Z}}$ under which the coordinate variables form a
Markov chain with transition probabilities $p (a,a')$ and stationary
distribution $\pi$. This follows from standard measure extension
theorems -- see, e.g., \cite{billingsley:pm}, sec.~1.8.

\begin{definition}\label{definition:weakConvergence}
A sequence of random variables $X_{n}$  (not necessarily all
defined on the same probability space)  is said to converge
\emph{weakly} or \emph{in distribution} to a limit distribution $F$ on
$\zz{R}$ (denoted by $X_{n}\Rightarrow F$) if
the distributions $F_{n}$ of $X_{n}$ converge to $F$ in the weak
topology on measures, that is, if
for every bounded, continuous function $\varphi :\zz{R} \rightarrow
\zz{R}$ (or equivalently, for every continuous function $\varphi $
with compact support),
\[
	\lim_{n \rightarrow \infty} \int \varphi \,dF_{n}=\int \varphi \, dF.
\]
as $n \rightarrow \infty$. 
\end{definition}

It is also customary to write $F_{n}\Longrightarrow F$ for this
convergence, since it is really a property of the distributions.  When
the limit distribution $F$ is the point mass $\delta_{0}$ at $0$ we
may sometimes write $X_{n}\Rightarrow 0$ instead of $X_{n}\Rightarrow
\delta_{0}$. The weak topology on probability measures is metrizable;
when necessary we will denote by $\varrho$ a suitable metric.  It is
an elementary fact that weak convergence of probability distributions
on $\zz{R}$ is equivalent to the pointwise convergence of the
cumulative distribution functions at all points of continuity of the
limit cumulative distribution function. Thus,
Theorem~\ref{theorem:general} is equivalent to the assertion that the
random variables $( N (\alpha )-n^{2}\kappa )/n^{3/2}$ on the
probability spaces $(\mathcal{F}_{n},\mu_{n})$ converge in
distribution to $\Phi_{\sigma}$.

We conclude with several elementary tools of weak convergence that
will be used  repeatedly throughout the paper. First, given any
countable family $X_{n}$ of random variables, possibly defined on
different probability spaces, there exist on the Lebesgue space
$([0,1],\text{Lebesgue} )$ random variables $Y_{n}$ such that for each $n$ the
random variables $X_{n}$ and $Y_{n}$ have the same
distribution. Furthermore, the random variables $Y_{n}$ can be
constructed in such a way that if the random variables $X_{n}$
converge in distribution then the random variables $Y_{n}$ converge
pointwise on $[0,1]$ (the converse is trivial). Next, define the
\emph{total variation distance} between two probability measures $\mu$
and $\nu$ defined on a common measurable space $(\Omega ,\mathcal{B})$
by
\[
	\xnorm{\mu -\nu}_{TV}=\max (\mu (A)-\nu (A))
\]
where $A$ ranges over all measurable subsets (events) of
$\Omega$.  Total variation distance is never increased by
mapping, that is, if $T:\Omega \rightarrow \Omega ' $ is a measurable
transformation then 
\begin{equation}\label{eq:mappingPrinciple}
	\xnorm{\mu \circ T^{-1} -\nu \circ T^{-1}}_{TV} \leq \xnorm{\mu -\nu}_{TV}.
\end{equation}
Also, if $\mu $ and $\nu$ are mutually absolutely continuous, with
Radon-Nikodym derivative $d\mu /d\nu$, then
\begin{equation}\label{eq:RN-TV}
	\xnorm{\mu -\nu}_{TV}=\frac{1}{2} E_{\nu}\left| \frac{d\mu}{d\nu}-1\right|
\end{equation}
 It is easily seen that if a sequence of probability measures
$\{\mu_{n} \}_{n\geq 1}$ on $\zz{R}$ is Cauchy in total variation
distance then the sequence converges in distribution. The following
lemma is elementary:

\begin{lemma}\label{lemma:tools}
Let $X_{n}$ and $Y_{n}$ be two sequences of random variables, all
defined on a common probability space, let $a_{n}$ be a
sequence of scalars, and fix $r>0$. Denote by $F_{n}$ and $G_{n}$ the
distributions of $X_{n}$ and $Y_{n},$ respectively. Then the equivalence
\begin{equation}\label{eq:slutsky}
	\frac{Y_{n}-a_{n}}{n^{r}}{\Longrightarrow} F
%	\text{Normal} (0,\sigma^{2})
	\quad   \text{if and only if }\quad 
	\frac{X_{n}-a_{n}}{n^{r}}{\Longrightarrow}  F
%	\text{Normal} (0,\sigma^{2})
\end{equation}
holds if either 
\begin{align}\label{eq:toolA}
	( X_{n}-Y_{n})/n^{r}&\Longrightarrow 0 
	\quad \text{or}\\ 
\label{eq:toolB}
	\xnorm{F_{n}-G_{n}}_{TV} &\longrightarrow 0
\end{align}
as $n \rightarrow \infty$.  Furthermore, \eqref{eq:toolB}
implies \eqref{eq:toolA}.
\end{lemma}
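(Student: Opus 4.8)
The statement is elementary; the plan is to verify the equivalence \eqref{eq:slutsky} under each of the two hypotheses separately, and then to record that \eqref{eq:toolB} reduces to \eqref{eq:toolA}. Throughout I would use the standard fact that a sequence of random variables converges in distribution to the point mass $\delta_{0}$ precisely when it converges to $0$ in probability.

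First I would handle case \eqref{eq:toolA}. Write $(X_{n}-a_{n})/n^{r}=(Y_{n}-a_{n})/n^{r}+(X_{n}-Y_{n})/n^{r}$, so that by hypothesis the second summand tends to $0$ in probability. The classical Slutsky lemma --- if $U_{n}\Rightarrow F$ and $W_{n}\to 0$ in probability, then $U_{n}+W_{n}\Rightarrow F$ --- then shows that $(Y_{n}-a_{n})/n^{r}\Rightarrow F$ forces $(X_{n}-a_{n})/n^{r}\Rightarrow F$. Since $(Y_{n}-X_{n})/n^{r}\to 0$ in probability as well, interchanging the roles of $X_{n}$ and $Y_{n}$ gives the reverse implication, and hence the equivalence \eqref{eq:slutsky}.

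Next, case \eqref{eq:toolB}. The affine map $t\mapsto (t-a_{n})/n^{r}$ is a measurable bijection of $\zz{R}$, so by the mapping principle \eqref{eq:mappingPrinciple} the distributions $F_{n}'$ and $G_{n}'$ of $(X_{n}-a_{n})/n^{r}$ and $(Y_{n}-a_{n})/n^{r}$ satisfy $\xnorm{F_{n}'-G_{n}'}_{TV}\leq \xnorm{F_{n}-G_{n}}_{TV}\to 0$. For any bounded continuous $\varphi:\zz{R}\to\zz{R}$ one has $\bigl|\int\varphi\,dF_{n}'-\int\varphi\,dG_{n}'\bigr|\leq 2\xnorm{\varphi}_{\infty}\xnorm{F_{n}'-G_{n}'}_{TV}\to 0$, so $\int\varphi\,dF_{n}'$ and $\int\varphi\,dG_{n}'$ converge or diverge together and, when they converge, have the same limit. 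In particular $F_{n}'\Rightarrow F$ if and only if $G_{n}'\Rightarrow F$, which is \eqref{eq:slutsky}.

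Finally, for the implication \eqref{eq:toolB} $\Rightarrow$ \eqref{eq:toolA}, I would replace the joint law of $(X_{n},Y_{n})$ by the maximal coupling of the marginals $F_{n}$ and $G_{n}$: on a suitable probability space there are random variables with laws $F_{n}$ and $G_{n}$ that differ with probability $\xnorm{F_{n}-G_{n}}_{TV}\to 0$, so their normalized difference tends to $0$ in probability. This is harmless, since the two marginal laws --- and therefore the assertion \eqref{eq:slutsky}, which involves only them --- are untouched. There is no real obstacle anywhere in the argument; the one point worth a word is precisely this last one, namely that \eqref{eq:toolB} constrains only the marginals and not the joint law, which is why the passage to the maximal coupling is needed.
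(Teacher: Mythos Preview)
The paper does not prove this lemma at all; it simply introduces it with ``The following lemma is elementary'' and states it. So there is nothing to compare against, and your write-up would serve as the missing justification.

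Your arguments for \eqref{eq:toolA}$\Rightarrow$\eqref{eq:slutsky} and \eqref{eq:toolB}$\Rightarrow$\eqref{eq:slutsky} are correct and exactly the standard ones (Slutsky for the first, total variation dominating weak distance for the second).

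The point that deserves a cautionary comment is the final clause, ``\eqref{eq:toolB} implies \eqref{eq:toolA}''. As literally stated---for \emph{given} $X_{n},Y_{n}$ on a common probability space---this is false: take $X_{n}=n^{r}Z$ and $Y_{n}=-n^{r}Z$ with $Z$ standard normal, so $F_{n}=G_{n}$ and hence $\xnorm{F_{n}-G_{n}}_{TV}=0$, yet $(X_{n}-Y_{n})/n^{r}=2Z\not\Rightarrow 0$. You clearly see this, since you pass to the maximal coupling; but that changes the joint law and therefore proves only that \emph{some} coupled versions satisfy \eqref{eq:toolA}, not that the original pair does. Your remark that this is ``harmless'' because \eqref{eq:slutsky} depends only on the marginals is correct and is the right way to resolve the issue for all applications in the paper. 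I would just make explicit that the literal implication fails and that what is actually being asserted (and what the paper needs) is the coupling version; otherwise a careful reader may object that the claimed implication is simply wrong.
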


The following lemma is an elementary consequence of Chebyshev's
inequality and the definition of weak convergence. 

\begin{lemma}\label{lemma:Btools}
Let $X_{n}$ be a sequence of random variables. Suppose that for every
$\varepsilon >0$ there exist random variables $X^{\varepsilon }_{n}$
and $R^{\varepsilon }_{n}$ such that 
\begin{gather}\label{eq:approxX}
	X_{n}=X^{\varepsilon }_{n}+R^{\varepsilon}_{n},\\
\notag 	X^{\varepsilon }_{n} \Longrightarrow 
	\text{Normal}(0,\sigma_{\varepsilon }^{2}), \quad \text{and}\\ 
\notag 	E|R^{\varepsilon }_{n}|^{2}\leq \varepsilon .
\end{gather}
Then $\lim_{\varepsilon  \rightarrow 0}\sigma_{\varepsilon
}^{2}:=\sigma^{2}\geq 0$ exists and is finite, and
\begin{equation}\label{eq:normalConvergence}
	X_{n}\Longrightarrow \text{Normal} (0,\sigma^{2}).
\end{equation}
\end{lemma}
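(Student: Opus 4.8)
The plan is to pass to characteristic functions and run an $\eta/3$--argument in which $\varepsilon$ itself plays the role of the approximation parameter. Write $\phi_n(t)=Ee^{itX_n}$ and $\phi_n^{\varepsilon}(t)=Ee^{itX_n^{\varepsilon}}$, and recall that the hypothesis $X_n^{\varepsilon}\Rightarrow\text{Normal}(0,\sigma_\varepsilon^2)$ means $\phi_n^{\varepsilon}(t)\to e^{-\sigma_\varepsilon^2 t^2/2}$ for each fixed $t$ as $n\to\infty$. From $X_n=X_n^{\varepsilon}+R_n^{\varepsilon}$ and the elementary bound $|e^{iu}-1|\le|u|$ one gets, for every $n$,
\[
	|\phi_n(t)-\phi_n^{\varepsilon}(t)|\le E\,|e^{itR_n^{\varepsilon}}-1|\le|t|\,E|R_n^{\varepsilon}|\le|t|\sqrt{\varepsilon},
\]
the last step by Cauchy--Schwarz together with the hypothesis $E|R_n^{\varepsilon}|^{2}\le\varepsilon$. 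The crucial feature is that this estimate is uniform in $n$.

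First I would establish that $\sigma^{2}:=\lim_{\varepsilon\to0}\sigma_\varepsilon^{2}$ exists and is finite. Fixing $t$ and letting $n\to\infty$ in the displayed inequality, applied with two parameters $\varepsilon,\varepsilon'$, yields $|e^{-\sigma_\varepsilon^2 t^2/2}-e^{-\sigma_{\varepsilon'}^2 t^2/2}|\le|t|(\sqrt{\varepsilon}+\sqrt{\varepsilon'})$, so for each fixed $t$ the number $e^{-\sigma_\varepsilon^2 t^2/2}$ is Cauchy as $\varepsilon\to0$ and converges to some $L(t)\in[0,1]$. The one place where something could go wrong is that $\sigma_\varepsilon^2$ might blow up; to rule this out, fix a reference value $\varepsilon_0$ (so $\sigma_{\varepsilon_0}^2<\infty$). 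For all $\varepsilon<\varepsilon_0$ we get $e^{-\sigma_\varepsilon^2 t^2/2}\ge e^{-\sigma_{\varepsilon_0}^2 t^2/2}-2|t|$, and choosing $t$ small enough that the right side exceeds $1/2$ forces $\sigma_\varepsilon^2\le 2(\log 2)/t^2$, a bound independent of $\varepsilon$. Hence $L(t)>0$ for that $t$, and $\sigma^2=-2(\log L(t))/t^2$ is finite and nonnegative.

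I would then conclude: given $t$ and $\eta>0$, pick $\varepsilon$ small enough that $|t|\sqrt{\varepsilon}<\eta/3$ and $|e^{-\sigma_\varepsilon^2 t^2/2}-e^{-\sigma^2 t^2/2}|<\eta/3$; then for $n$ large, $|\phi_n^{\varepsilon}(t)-e^{-\sigma_\varepsilon^2 t^2/2}|<\eta/3$, and the triangle inequality gives
\[
	|\phi_n(t)-e^{-\sigma^2 t^2/2}|\le|\phi_n(t)-\phi_n^{\varepsilon}(t)|+|\phi_n^{\varepsilon}(t)-e^{-\sigma_\varepsilon^2 t^2/2}|+|e^{-\sigma_\varepsilon^2 t^2/2}-e^{-\sigma^2 t^2/2}|<\eta.
\]
Thus $\phi_n(t)\to e^{-\sigma^2 t^2/2}$ pointwise, and L\'evy's continuity theorem yields $X_n\Rightarrow\text{Normal}(0,\sigma^2)$ (read as $\delta_0$ if $\sigma^2=0$). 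A soft variant, matching the lemma's phrasing, replaces characteristic functions by the L\'evy--Prokhorov metric: Chebyshev's inequality with threshold $a=\varepsilon^{1/3}$ gives $P(|R_n^{\varepsilon}|>\varepsilon^{1/3})\le\varepsilon^{1/3}$, hence $\mathrm{law}(X_n)$ and $\mathrm{law}(X_n^{\varepsilon})$ are within $\varepsilon^{1/3}$ in Prokhorov distance uniformly in $n$; the family $\{\text{Normal}(0,\sigma_\varepsilon^2)\}$ is then Cauchy in the complete space of probability measures, its limit is necessarily a centered Gaussian (so $\sigma_\varepsilon^2$ converges to a finite $\sigma^2\ge0$), and the same triangle-inequality bookkeeping finishes. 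In both routes the only real obstacle is the finiteness of $\sigma^2$, i.e. keeping $\sigma_\varepsilon^2$ bounded; everything else is routine $\eta/3$ estimation.
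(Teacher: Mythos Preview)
Your argument is correct. Note, however, that the paper does not actually supply a proof of this lemma: it is stated as ``an elementary consequence of Chebyshev's inequality and the definition of weak convergence,'' with no further detail. Your second route---Chebyshev with threshold $\varepsilon^{1/3}$ to get a uniform Prokhorov bound between $\mathrm{law}(X_n)$ and $\mathrm{law}(X_n^{\varepsilon})$, then a Cauchy/triangle argument among the Gaussian limits---is exactly the kind of argument the paper's hint points to. Your primary characteristic-function route is a perfectly good alternative and has the advantage of making the boundedness of $\sigma_\varepsilon^2$ explicit via the lower bound $e^{-\sigma_\varepsilon^2 t^2/2}\ge e^{-\sigma_{\varepsilon_0}^2 t^2/2}-|t|(\sqrt{\varepsilon}+\sqrt{\varepsilon_0})$ at small $t$; in the Prokhorov variant this boundedness is implicit (a Cauchy sequence of centered Gaussians in a complete space of probability measures cannot have variances escaping to infinity), and it would not hurt to say so in one sentence.
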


\bibliographystyle{plain}
\bibliography{mainbib}

\def\cprime{$'$}
\begin{thebibliography}{10}

\bibitem{billingsley:wc}
Patrick Billingsley.
\newblock {\em Convergence of probability measures}.
\newblock John Wiley \& Sons Inc., New York, 1968.

\bibitem{billingsley:pm}
Patrick Billingsley.
\newblock {\em Convergence of probability measures}.
\newblock Wiley Series in Probability and Statistics: Probability and
  Statistics. John Wiley \& Sons Inc., New York, second edition, 1999.
\newblock A Wiley-Interscience Publication.

\bibitem{birman-series:2}
Joan~S. Birman and Caroline Series.
\newblock An algorithm for simple curves on surfaces.
\newblock {\em J. London Math. Soc. (2)}, 29(2):331--342, 1984.

\bibitem{chas:2004}
Moira Chas.
\newblock Combinatorial {L}ie bialgebras of curves on surfaces.
\newblock {\em Topology}, 43(3):543--568, 2004.

\bibitem{chas-phillips:preprint}
Moira Chas and Anthony Phillips.
\newblock Self-intersection numbers of curves on the doubly punctured sphere.

\bibitem{chas-phillips:2010}
Moira Chas and Anthony Phillips.
\newblock Self-intersection numbers of curves on the punctured torus.
\newblock {\em Experimental Mathematics}, 2010.

\bibitem{cohen-lustig}
Marshall Cohen and Martin Lustig.
\newblock Paths of geodesics and geometric intersection numbers. {I}.
\newblock In {\em Combinatorial group theory and topology ({A}lta, {U}tah,
  1984)}, volume 111 of {\em Ann. of Math. Stud.}, pages 479--500. Princeton
  Univ. Press, Princeton, NJ, 1987.

\bibitem{denker-keller}
Manfred Denker and Gerhard Keller.
\newblock On {$U$}-statistics and v. {M}ises' statistics for weakly dependent
  processes.
\newblock {\em Z. Wahrsch. Verw. Gebiete}, 64(4):505--522, 1983.

\bibitem{hoeffding}
Wassily Hoeffding.
\newblock A class of statistics with asymptotically normal distribution.
\newblock {\em Ann. Math. Statistics}, 19:293--325, 1948.

\bibitem{iosifescu}
Marius Iosifescu.
\newblock On {$U$}-statistics and von {M}ises statistics for a special class of
  {M}arkov chains.
\newblock {\em J. Statist. Plann. Inference}, 30(3):395--400, 1992.

\bibitem{lalley:si2}
Steven~P. Lalley.
\newblock Self-intersections of random geodesics on negatively curved surfaces.

\bibitem{lalley:si1}
Steven~P. Lalley.
\newblock Self-intersections of closed geodesics on a negatively curved
  surface: statistical regularities.
\newblock In {\em Convergence in ergodic theory and probability ({C}olumbus,
  {OH}, 1993)}, volume~5 of {\em Ohio State Univ. Math. Res. Inst. Publ.},
  pages 263--272. de Gruyter, Berlin, 1996.

\bibitem{mirzakhani}
Maryam Mirzakhani.
\newblock Growth of the number of simple closed geodesics on hyperbolic
  surfaces.
\newblock {\em Ann. of Math. (2)}, 168(1):97--125, 2008.

\end{thebibliography}

\end{document}